\title{Pairwise Suitable Family of Permutations and Boxicity}
\author[1]{Manu~Basavaraju}
\author[2]{L.~Sunil~Chandran}
\author[3]{Rogers~Mathew\footnote{Supported by an AARMS Postdoctoral Fellowship}}
\author[2]{Deepak~Rajendraprasad\footnote{Partially supported by Microsoft Research India PhD Fellowship}}
\affil[1]{
	The Institute of Mathematical Sciences, \authorcr
	Chennai, India - 600113. \authorcr
	manub@imsc.res.in
}
\affil[2]{
	Department of Computer Science and Automation, \authorcr 
	Indian Institute of Science, 
    Bangalore, India - 560012. \authorcr
	\{sunil, deepakr\}@csa.iisc.ernet.in
}
\affil[3]
{
	Department of Mathematics and Statistics, \authorcr 
	Dalhousie University, 
	Halifax, Canada - B3H 3J5. \authorcr
	rogersm@mathstat.dal.ca
}
\theoremstyle{definition}
\newtheorem{definition}{Definition}
\theoremstyle{plain}
\newtheorem{theorem}{Theorem}
\newtheorem{lemma}[theorem]{Lemma}
\newtheorem{corollary}[theorem]{Corollary}
\newtheorem{conjecture}[theorem]{Conjecture}
\newtheorem{openproblem}[theorem]{Open problem}
\newtheorem{observation}[theorem]{Observation}
\theoremstyle{remark}
\newtheoremstyle{plainitshape}
  {}
  {}
  {\itshape}
  {}
  {\itshape}
  {.}
  {0.5em}
  {}
\theoremstyle{plainitshape}
\newtheorem{claim}{Claim}[theorem]
\newtheorem{property}{Property}
\newtheoremstyle{cases}
  {}
  {}
  {}
  {}
  {}
  {\newline}
  {0.5em}
  {{\itshape \thmname{#1}} \thmnumber{#2} ({\itshape\thmnote{#3}}).\medskip}
\theoremstyle{cases}
\newtheorem{case}{Case}
\newtheoremstyle{constructions}
  {}
  {}
  {}
  {}
  {}
  {}
  {0.5em}
  {{\itshape \thmname{#1}} \thmnumber{#2}\medskip}
\theoremstyle{constructions}
\newtheorem{construction}{Construction}[theorem]
\newcommand{\boxli}{\pi}
\newcommand{\boxlistar}{\boxli^{\star}}
\newcommand{\boxi}{\operatorname{boxicity}}
\newcommand{\pw}{\operatorname{pw}}
\newcommand{\tw}{\operatorname{tw}}
\newcommand{\idim}{\operatorname{dim}}
\newcommand{\leftend}{l}
\newcommand{\rightend}{r}
\newcommand{\floor}[1]{\left\lfloor #1 \right\rfloor}
\newcommand{\ceil}[1]{\left\lceil #1 \right\rceil}
\newcommand{\ilog}{\operatorname{log^{\star}}}
\newcommand{\order}[1]{O\left( #1 \right)}
\newcommand{\orderatleast}[1]{\Omega\left( #1 \right)}
\newcommand{\orderexactly}[1]{\Theta\left( #1 \right)}
\def\tends{\rightarrow}
\def\into{\rightarrow}
\def\N{\mathbb{N}}
\def\R{\mathbb{R}}
\def\G{\mathcal{G}}
\def\F{\mathcal{F}}
\def\E{\mathcal{E}}
\newcommand{\rogers}[1]{{\color{green} #1}}
\begin{document}
\maketitle

\begin{abstract}
A family $\F$ of permutations of the vertices of a hypergraph $H$ is called {\em pairwise suitable} for $H$ if, for every pair of disjoint edges in $H$, there exists a permutation in $\F$ in which all the vertices in one edge precede those in the other. The cardinality of a smallest such family of permutations for $H$ is called the {\em separation dimension} of $H$ and is denoted by $\boxli(H)$. Equivalently, $\boxli(H)$ is the smallest natural number $k$ so that the vertices of $H$ can be embedded in $\R^k$ such that any two disjoint edges of $H$ can be separated by a hyperplane normal to one of the axes. We show that the separation dimension of a hypergraph $H$ is equal to the {\em boxicity} of the line graph of $H$. This connection helps us in borrowing results and techniques from the extensive literature on boxicity to study the concept of separation dimension. 


\vspace{1ex}
\noindent\textbf{Keywords:} separation dimension, boxicity, scrambling permutation, line graph, treewidth, degeneracy, acyclic chromatic number.
\end{abstract}


\section{Introduction}

Let $\sigma:U \into [n]$ be a permutation of  elements of an $n$-set $U$. For two disjoint subsets $A,B$ of $U$, we say  $A \prec_{\sigma} B$ when every element of $A$ precedes every element of $B$ in $\sigma$, i.e., $\sigma(a) < \sigma(b), \forall (a,b) \in A \times B$. Otherwise, we say $A \nprec_{\sigma} B$. We say that $\sigma$ {\em separates} $A$ and $B$ if either $A \prec_{\sigma} B$ or $B \prec_{\sigma} A$. We use $a \prec_{\sigma} b$ to denote $\{a\} \prec_{\sigma} \{b\}$. For two subsets $A, B$ of $U$, we say $A \preceq_{\sigma} B$ when $A \setminus B \prec_{\sigma} A\cap B \prec_{\sigma} B \setminus A$. 

In this paper, we introduce and study a notion called \emph{pairwise suitable family of permutations} for a hypergraph $H$.

\begin{definition}
\label{definitionPairwiseSuitable}
A family $\F$ of permutations of $V(H)$ is \emph{pairwise suitable} for a hypergraph $H$ if, for every two disjoint edges $e,f \in E(H)$, there exists a permutation $\sigma \in \F$  which separates $e$ and $f$. The cardinality of a smallest family of permutations that is pairwise suitable for $H$ is called the {\em separation dimension} of $H$ and is denoted by $\boxli(H)$. 
\end{definition}

A family $\F = \{\sigma_1, \ldots, \sigma_k\}$ of permutations of a set $V$ can be seen as an embedding of $V$ into $\R^k$ with the $i$-th coordinate of $v \in V$ being the rank of $v$ in the $\sigma_i$. Similarly, given any embedding of $V$ in $\R^k$, we can construct $k$ permutations by projecting the points onto each of the $k$ axes and then reading them along the axis, breaking the ties arbitrarily. From this, it is easy to see that $\boxli(H)$ is the smallest natural number $k$ so that the vertices of $H$ can be embedded into $\R^k$ such that any two disjoint edges of $H$ can be separated by a hyperplane normal to one of the axes. This motivates us to call such an embedding a {\em separating embedding} of $H$ and $\boxli(H)$ the {\em separation dimension} of $H$.

The study of similar families of permutations dates back to the work of Ben Dushnik in 1947 where he introduced the notion of \emph{$k$-suitability} \cite{dushnik}. A family $\F$ of permutations of $[n]$ is \emph{$k$-suitable} if, for every $k$-set $A \subseteq [n]$ and for every $a \in A$, there exists a $\sigma \in \mathcal{F}$ such that $A \preceq_{\sigma} \{a\}$. Let $N(n,k)$ denote the cardinality of a smallest family of permutations that is $k$-suitable for $[n]$. In 1971, Spencer \cite{scramble} proved that $\log \log n \leq N(n,3) \leq N(n,k) \leq k2^k\log \log n$. He also showed that $N(n,3) < \log \log n + \frac{1}{2} \log \log \log n + \log (\sqrt{2}\boxli) + o(1)$. Fishburn and Trotter, in 1992, defined the \emph{dimension} of a hypergraph on the vertex set $[n]$ to be the minimum size of a family $\F$ of permutations of $[n]$ such that every edge of the hypergraph is an intersection of \emph{initial segments} of $\F$ \cite{FishburnTrotter1992}. It is easy to see that an edge $e$ is an intersection of initial segments of $\F$ if and only if for every $v \in [n] \setminus e$, there exists a permutation $\sigma \in \F$ such that $e \prec_{\sigma} \{v\}$. F\"{u}redi, in 1996, studied the notion of \emph{$3$-mixing} family of permutations \cite{Furedi1996}. A family $\F$ of permutations of $[n]$ is called $3$-mixing if for every $3$-set $\{a, b, c\} \subseteq [n]$ and a designated element $a$ in that set, one of the permutations in $\F$ places the element $a$ between $b$ and $c$. It is clear that $a$ is between $b$ and $c$ in a permutation $\sigma$ if and only if $\{a,b\} \preceq_{\sigma} \{a,c\}$ or $\{a.c\} \preceq_{\sigma} \{a,b\}$. Such families of permutations with small sizes have found applications in showing upper bounds for many combinatorial parameters like poset dimension \cite{kierstead1996order}, product dimension \cite{FurediPrague}, boxicity \cite{RogSunSiv} etc.

The notion of separation dimension introduced here seems so natural but, to the best of our knowledge, has not been studied in this generality before. Apart from that, a major motivation for us to study this notion of separation is its interesting connection with a certain well studied geometric representation of graphs.  In fact, we show that $\boxli(H)$ is same as the \emph{boxicity} of the intersection graph of the edge set of $H$, i.e., the line graph of $H$.
  
An axis-parallel $k$-dimensional box or a \emph{$k$-box} is  a Cartesian  product $R_1 \times  \cdots \times R_k$, where each  $R_i$ is a closed interval on the real line. For example, a line segment lying parallel to the $X$ axis is a $1$-box, a rectangle with its sides parallel to the $X$ and $Y$ axes is a $2$-box, a rectangular cuboid with its sides parallel to the $X$, $Y$, and $Z$ axes is a $3$-box and so on. A {\em box representation} of a graph $G$ is a geometric representation of $G$ using axis-parallel boxes as follows. 

\begin{definition}
\label{definitionBoxicity}
The \emph{$k$-box representation} of a graph $G$ is a function $f$ that maps each vertex in $G$ to a $k$-box in $\mathbb{R}^k$ such that, for all vertices $u,v$ in $G$, the pair $\{u,v\}$ is an edge if and only if $f(u)$ intersects $f(v)$. The \emph{boxicity} of a graph $G$, denoted by $\boxi(G)$, is the minimum positive integer $k$ such that $G$ has a $k$-box representation. 
\end{definition}

Box representation is a generalisation of interval representation of \emph{interval graphs} (intersection graphs of closed intervals on the real line). From the definition of boxicity, it is easy to see that interval graphs are precisely the graphs with boxicity $1$.  The concept of boxicity was introduced by F.S. Roberts in 1969 \cite{Roberts}. He showed that every graph on $n$ vertices has an $\floor{ n/2  }$-box representation. The $n$-vertex graph whose complement is a perfect matching is an example of a graph whose boxicity is equal to $n/2$. Upper bounds for boxicity in terms of other graph parameters like maximum degree, treewidth, minimum vertex cover, degeneracy etc. are available in literature. Adiga, Bhowmick, and Chandran showed that the boxicity of a graph with maximum degree $\Delta$ is $O(\Delta \log^2 \Delta)$ \cite{DiptAdiga}. Chandran and Sivadasan proved that boxicity of a graph with treewidth $t$ is at most $t+2$ \cite{CN05}. It was shown by Adiga, Chandran and Mathew that the boxicity of a $k$-degenerate graph on $n$ vertices is $O(k \log n)$ \cite{RogSunAbh}. Boxicity is also studied in relation with other dimensional parameters of graphs like partial order dimension and threshold dimension \cite{DiptAdiga,Yan1}. Studies on box representations of special graph classes too are available in abundance. Scheinerman showed that every outerplanar graph has a $2$-box representation \cite{Scheiner} while Thomassen showed that every planar graph has a $3$-box representation \cite{Thoma1}. Results on boxicity of series-parallel graphs \cite{CRB1}, Halin graphs \cite{halinbox}, chordal graphs, AT-free graphs, permutation graphs \cite{CN05}, circular arc graphs \cite{Dipt}, chordal bipartite graphs \cite{SunMatRog} etc. can be found in literature. Here we are interested in boxicity of the line graph of hypergraphs.

\begin{definition}
\label{definitionLineGraph}
The \emph{line graph} of a hypergraph $H$, denoted by $L(H)$, is the graph with vertex set $V(L(H)) = E(H)$ and edge set $E(L(H)) = \{\{e,f\} : e, f \in E(H), e \cap f \neq \emptyset \}$.
\end{definition}
  
For the line graph of a graph $G$ with maximum degree $\Delta$, it was shown by Chandran, Mathew and Sivadasan that its boxicity is $\order{\Delta \log\log \Delta}$ \cite{RogSunSiv}. It was in their attempt to improve this result that the authors stumbled upon pairwise suitable family of permutations and its relation with the boxicity of the line graph of $G$. May be we should mention in passing that though line graphs of graphs form a proper subclass of graphs, any graph is a line graph of some hypergraph.


\subsection{Summary of results}
\newcounter{counterResult}

Some of the results in this paper are interesting because of their consequences. Some are interesting because of the connections with other questions in combinatorics, which are exploited to good effect in their proof. Hence, in this section summarising our results, we indicate those connections along with the consequences. The definitions of the parameters mentioned are given in the appropriate sections. As noted earlier, the motivating result for this paper is the following: 
\begin{enumerate}
\item For any hypergraph $H$, $\boxli(H)$ is precisely the boxicity of the line graph of $H$, i.e., 
$$\boxli(H) = \boxi(L(H)) 
	\mbox{\hspace{5ex} (Theorem \ref{theoremConnectionBoxliPermutation})}.$$
\setcounter{counterResult}{\value{enumi}}
\end{enumerate}

It is the discovery of this intriguing connection that aroused our interest in the study of pairwise suitable families of permutations. This immediately makes applicable every result in the area of boxicity to separation dimension. For example, any hypergraph with $m$ edges can be separated in $\R^{\floor{m/2}}$; for every $m \in \N$, there exist hypergraphs with $m$ edges which cannot be separated in any proper subspace of $\R^{\floor{m/2}}$; every hypergraph whose line graph is planar can be separated in $\R^3$; every hypergraph whose line graph has a treewidth at most $t$ can be separated in $\R^{t+2}$; hypergraphs separable in $\R^1$ are precisely those whose line graphs are interval graphs and so on. Further, algorithmic and hardness results from boxicity carry over to separation dimension since constructing the line graph of a hypergraph can be done in quadratic time. We just mention two of them. Deciding if the separation dimension is at most $k$ is NP-Complete for every $k \geq 2$ \cite{Coz,Kratochvil} and unless NP = ZPP, for any $\epsilon >0$, there does not exist a polynomial time algorithm to approximate the separation dimension of a hypergraph within a factor of $m^{1/2 -\epsilon}$ where $m = |E(H)|$ \cite{DiptAdigaHardness} \footnote{A recent preprint claims that the inapproximability factor can be improved to $m^{1 - \epsilon}$, which is essentially tight. \cite{chalermsookgraph}.}. In this work, we have tried to find bounds on the separation dimension of a hypergraph in terms of natural invariants of the hypergraph like maximum degree, rank etc. The next two results are for rank-$r$ hypergraphs. 

\begin{enumerate}
\setcounter{enumi}{\value{counterResult}}

\item 
For any rank-$r$ hypergraph $H$ on $n$ vertices 
$$\boxli(H) \leq \frac{e \ln 2}{\pi \sqrt{2}}  4^r \sqrt{r} \log n 
	\mbox{\hspace{5ex} (Theorem \ref{theoremHypergraphSizeUpperbound})}.
$$
The bound is obtained by direct probabilistic arguments. The next result shows that this bound is tight up to a factor of constant times $r$.
\item
Let $K_n^r$ denote the complete $r$-uniform hypergraph on $n$ vertices with $r > 2$. Then 
$$ c_1 \frac{4^r}{\sqrt{r-2}} \log n \leq \boxli(K_n^r) \leq c_2 4^r \sqrt{r} \log n,$$ for $n$ sufficiently larger than $r$ and where $c_1 = \frac{1}{2^7}$ and  $c_2 =  \frac{e\ln2}{\pi\sqrt{2}} < \frac{1}{2}$ (Theorem \ref{theoremHypergraphSizeLowerbound}). The lower bound is obtained by first proving that the separation dimension of $K_n$, the complete graph on $n$ vertices, is in $\orderatleast{\log n}$ and then showing that, given any separating embedding of $K_n^r$ in $\R^d$, the space $\R^d$ contains ${2r-4 \choose r-2}$ orthogonal subspaces such that the projection of the given embedding on to these subspaces gives a separating embedding of a $K_{n-2r+4}$.   

\item
For any rank-$r$ hypergraph $H$ of maximum degree $D$,
$$\boxli(H) \leq \order{rD \log^2(rD)} 
	\mbox{\hspace{5ex} (Corollary \ref{corollaryHypergraphMaxDegree})}. 
$$
This is a direct consequence of the nontrivial fact that $\boxi(G) \in \order{\Delta \log^2 \Delta}$ for any graph $G$ of maximum degree $\Delta$ \cite{DiptAdiga}. Further using the fact, again a nontrivial one,  that there exist graphs of maximum degree $\Delta$ with boxicity $\orderatleast{\Delta \log \Delta}$ \cite{DiptAdiga}, we show that there exists rank-$r$ hypergraphs of maximum degree $2$ with separation dimension in $\orderatleast{r \log r}$. It is trivial to see that the separation dimension of hypergraphs with maximum degree $1$ cannot be more than $1$.

\setcounter{counterResult}{\value{enumi}}
\end{enumerate}

Below we highlight the main results in this paper when we restrict $H$ to be a graph. Every graph has a non-crossing straight line 3D drawing, which is nothing but an embedding of the vertices of a graph into $\R^3$ such that  any two disjoint edges can be separated by a plane. Hence if we allow separating hyperplanes of all orientations, then we can have a separating embedding of every graph in $\R^3$. But if we demand that all the separating hyperplanes be normal to one of the coordinate axes, then the story changes. 

 
\vspace{1ex}
\noindent For a graph $G$ on $n$ vertices, we show the following upper bounds. 

\begin{enumerate}
\setcounter{enumi}{\value{counterResult}}

\item $\boxli(G) \leq 6.84 \log n$ (Theorem \ref{theoremBoxliSize}).
This bound is obtained by simple probabilistic arguments. We also prove that this bound is tight up to constant factors by showing that a complete graph $K_n$ on $n$ vertices has $\boxli(K_n) \geq \log \floor{n/2}$.
 
\item $\boxli(G) \leq 2^{9 \ilog \Delta} \Delta$, where $\Delta$ denotes the maximum degree of $G$ (Theorem \ref{theoremBoxliDelta}).
This is an improvement over the upper bound of $\order{\Delta \log \log \Delta}$ for the boxicity of the line graph of $G$ proved in \cite{RogSunSiv}.
The proof technique works by recursively partitioning the graph into $\order{\Delta / \log \Delta}$ parts such that no vertex has more than $\frac{1}{2} \log \Delta$ neighbours in any part and then attacking all possible pairs of these parts..

\item $\boxli(G) \in O(k \log \log n)$, where $k$ is the degeneracy of $G$ (Theorem \ref{theoremBoxliDegeneracy}).
This is proved by decomposing $G$ into $2k$ star forests and using $3$-suitable permutations of the stars in every forest and the leaves in every such star simultaneously. We also show that the $\log \log n$ factor in this bound cannot be improved in general by demonstrating that for the fully subdivided clique $K_n^{1/2}$, which is a $2$-degenerate graph, $\boxli(K_n^{1/2}) \in \orderexactly{\log \log n}$.

\item $\boxli(G) \in \order{\log(t+1)}$, where $t$ denotes the treewidth of $G$ (Theorem \ref{theoremBoxliTreewidth}). This is proved by adjoining a family of pairwise suitable permutations of the colour classes of a minimal chordal supergraph of $G$ with $2\log (t+1)$ more ``colour sensitive'' permutations based on a DFS traversal of the tree. This bound is also seen to be tight up to constant factors because the clique $K_n$, whose treewidth is $n-1$, has  $\boxli(K_n) \geq \log \floor{n/2}$. 

\item $\boxli(G) \leq 2\chi_a + 13.68\log \chi_a$ and $\boxli(G) \leq \chi_s + 13.68\log \chi_s$, where $\chi_a$ and $\chi_s$ denote, respectively, the acyclic chromatic number and star chromatic number of $G$ (Theorem \ref{theoremBoxliAcyclicStar}). Both the bounds are obtained by exploiting the structure of the graph induced on a pair of colour classes. This bound, when combined with certain results from literature immediately gives a few more upper bounds (Corollary \ref{corollaryBoxliGenus}):
(i)   $\boxli(G) \in O(g^{4/7})$, where $g$ is the Euler genus of $G$; and
(ii)  $\boxli(G) \in O(t^2 \log t)$, if $G$ has no $K_t$ minor.

\item $\boxli(G) \leq 3$, if $G$ is planar (Theorem \ref{theoremBoxliPlanar}). This is proved using Schnyder's celebrated result on planar drawing \cite{schnyder1990embedding}. This bound is the best possible since the separation dimension of $K_4$ is $3$. 

\item $\boxli(G^{1/2}) \leq (1 + o(1)) \log \log (\chi - 1) + 2$, where $G^{1/2}$ is the graph obtained by subdividing every edge of  $G$ and $\chi$ is the chromatic number of $G$ (Corollary \ref{corollarySubdivisionChromaticNumber}). This is proved by associating with every graph $G$ an interval order whose dimension is at least $\boxli(G^{1/2})$ and whose height is less than the chromatic number of $G$. The tightness, up to a factor of $2$, of the above bound follows from our result that $\boxli(K_n^{1/2}) \geq \frac{1}{2} \floor{\log\log (n-1)}$.

\item 
If $G$ is the $d$-dimensional hypercube $Q_d$, 
$$ \frac{1}{2} \floor{\log\log(d-1)} \leq 
		\boxli(Q_d) \leq
		(1 + o(1)) \log\log d,
$$ 
where $c$ is a constant (Theorem \ref{theoremHypercube}). The lower bound follows since $K_d^{1/2}$ is contained as a subgraph of $Q_d$. The upper bound is obtained by taking a $3$-suitable family of permutations of the $d$ positions of the binary strings and defining an order on the strings itself based on that.

\setcounter{counterResult}{\value{enumi}}
\end{enumerate}

The main lower bounding strategy that we employ in this paper is the following result that we prove in Theorem \ref{theoremBoxliLowerBound}.

\begin{enumerate}
\setcounter{enumi}{\value{counterResult}}
\item
For a graph $G$, let $V_1, V_2 \subsetneq V(G)$ such that $V_1 \cap V_2 = \emptyset$. If  there exists an edge between every $s_1$-subset of $V_1$ and every $s_2$-subset of $V_2$, then $\boxli(G) \geq \min \left\{ \log \frac{|V_1|}{s_1}, \log \frac{|V_2|}{s_2} \right\}$.  

This immediately shows that $\boxli(K_{n,n}) \geq \log n$, $\boxli(K_n) \geq \log \floor{n/2}$ and that for any graph $G$, $\boxli(G) \geq \log \floor{\omega/2}$, where  $\omega$ denotes the size of a largest clique in $G$. It also forms a key ingredient in showing the lower bound on separation dimension of the complete $r$-uniform hypergraph. Finally it is used to derive the following lower bound for random graphs.

\item For a graph $G \in \G(n,p)$, $\boxli(G) \geq \log(np) - \log \log(np) - 2.5$ asymptotically almost surely (Theorem \ref{theoremBoxliLowerBoundRandom}).
\setcounter{counterResult}{\value{enumi}}
\end{enumerate}

The last result in the paper is the following lower bound on the separation dimension of fully subdivided cliques (Theorem \ref{theoremKnHalf}).
 
\begin{enumerate}
\setcounter{enumi}{\value{counterResult}}

\item Let $K_n^{1/2}$ denote the graph obtained by subdividing every edge of $K_n$ exactly once. Then, 
$$\boxli(K_n^{1/2}) \geq \frac{1}{2} \floor{\log\log(n-1) }.$$
This is proved by using Erd\H{os}-Szekeres Theorem to extract a large enough set of vertices of the underlying $K_n$ that are ordered essentially the same by every permutation in the selected family and then showing that separating the edges incident on those vertices can be modelled as a problem of finding a realiser for a canonical open interval order of same size. This lower bound is used to show the tightness of two of the upper bounds above.

\setcounter{counterResult}{\value{enumi}}
\end{enumerate}


\subsection{Outline of the paper}

The remainder of this paper is organised as follows. A brief note on some standard terms and notations used throughout this paper is given in Section \ref{sectionNotation}. Section \ref{sectionConnection} demonstrates the equivalence of separation dimension of a hypergraph $H$ and boxicity of the line graph of $H$. All the upper bounds are stated and proved in Section \ref{sectionUpperBounds}. The tightness of the upper bounds, where we know them, are mentioned alongside the bound but their proofs and discussion are postponed till the subsequent section (Section \ref{sectionLowerBound}). Finally, in Section \ref{sectionOpenProblems}, we conclude with a discussion on a few open problems that we find interesting.



\subsection{Notational note}
\label{sectionNotation}

A {\em hypergraph} $H$ is a pair $(V, E)$ where $V$, called the {\em vertex set}, is any set and $E$, called the {\em edge set}, is a collection of subsets of $V$. The vertex set and edge set of a hypergraph $H$ are denoted respectively by $V(H)$ and $E(H)$. The {\em rank} of a hypergraph $H$ is $\max_{e \in E(H)}|e|$ and $H$ is called {\em $k$-uniform} if $|e| = k, \forall e \in E(H)$. The {\em degree} of a vertex $v$ in $H$ is the number of edges of $H$ which contain $v$. The {\em maximum degree} of $H$, denoted as $\Delta(H)$ is the maximum degree over all vertices of $H$. All the hypergraphs considered in this paper are finite. 

A {\em graph} is a $2$-uniform hypergraph. For a graph $G$ and any $S \subseteq V(G)$, the subgraph of $G$ induced on the vertex set $S$ is denoted by $G[S]$. For any $v \in V(G)$, we use $N_G(v)$ to denote the neighbourhood of $v$ in $G$, i.e., $N_G(v) = \{u \in V(G) : \{v,u\} \in E(G)\}$.

A \emph{closed interval} on the real line, denoted as $[i,j]$ where $i,j \in \R$ and $i\leq j$, is the set $\{x\in \R : i\leq x\leq j\}$. Given an interval $X=[i,j]$, define $\leftend(X)=i$ and $\rightend(X)=j$. We say that the closed interval $X$ has \emph{left end-point} $\leftend(X)$ and \emph{right end-point} $\rightend(X)$. For any two intervals $[i_1, j_1], [i_2,j_2]$ on the real line, we say that $[i_1, j_1] < [i_2,j_2]$ if $j_1 < i_2$. 

For any finite positive integer $n$, we shall use $[n]$ to denote the set $\{1,\ldots , n\}$. A permutation of a finite set $V$ is a bijection from $V$ to $[|V|]$.

The logarithm of any positive real number $x$ to the base $2$ and $e$ are respectively denoted by $\log(x)$ and $\ln(x)$, while $\ilog(x)$ denotes the iterated logarithm of $x$ to the base $2$, i.e. the number of times the logarithm function (to the base $2$) should be applied so that the result is less than or equal to $1$.


\section{Pairwise suitable family of permutations and a box representation}
\label{sectionConnection}

In this section we show that a family of permutations of cardinality $k$ is pairwise suitable for a hypergraph $H$ (Definition \ref{definitionPairwiseSuitable}) if and only if the line graph of $H$ (Definition \ref{definitionLineGraph}) has a $k$-box representation (Definition \ref{definitionBoxicity}). Before we proceed to prove it, let us state an equivalent but more combinatorial definition for boxicity. 

We have already noted that interval graphs are precisely the graphs with boxicity $1$. Given a $k$-box representation of a graph $G$, orthogonally projecting the $k$-boxes to each of the $k$-axes in $\mathbb{R}^k$  gives $k$ families of intervals. Each one of these families can be thought of as an interval representation of some interval graph. Thus we get $k$ interval graphs. It is not difficult to observe that a pair of vertices is  adjacent in $G$ if and only if the pair is adjacent in each of the $k$ interval graphs obtained. The following lemma, due to Roberts \cite{Roberts}, formalises this relation between box representations and interval graphs.

\begin{lemma}[Roberts \cite{Roberts}]
\label{lemmaRoberts}
For every graph $G$, $\boxi(G) \leq k$ if and only if there exist $k$ interval graphs $I_1, \ldots, I_k$, with $V(I_1) = \cdots = V(I_k) = V(G)$ such that $G = I_1 \cap \cdots \cap I_k$.
\end{lemma}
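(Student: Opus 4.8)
The plan is to prove the biconditional characterization of Lemma~\ref{lemmaRoberts} (due to Roberts) by unwinding the two equivalent ways of describing a $k$-box representation: directly as an intersection-of-boxes picture in $\R^k$, and coordinate-wise as an intersection of $k$ interval graphs. The key observation driving both directions is the elementary fact that two axis-parallel boxes $B = R_1 \times \cdots \times R_k$ and $B' = R'_1 \times \cdots \times R'_k$ in $\R^k$ intersect if and only if their projections intersect in every coordinate, i.e.\ $R_i \cap R'_i \neq \emptyset$ for all $i \in [k]$. This is just the statement that a point lies in $B \cap B'$ exactly when each of its coordinates lies in the corresponding pair of intervals.

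First I would prove the forward direction. Assume $\boxi(G) \leq k$, so $G$ has a $k$-box representation $f$ mapping each $v \in V(G)$ to a box $f(v) = R_1(v) \times \cdots \times R_k(v)$. For each coordinate $i \in [k]$, define an interval graph $I_i$ on vertex set $V(G)$ by assigning to $v$ the closed interval $R_i(v)$ and making $u,v$ adjacent in $I_i$ precisely when $R_i(u) \cap R_i(v) \neq \emptyset$. By construction each $I_i$ is an interval graph with $V(I_i) = V(G)$. Now for any pair $u,v$, we have $\{u,v\} \in E(G)$ iff $f(u) \cap f(v) \neq \emptyset$ (by the definition of box representation) iff $R_i(u) \cap R_i(v) \neq \emptyset$ for all $i$ (by the projection fact above) iff $\{u,v\} \in E(I_i)$ for all $i$ (by the definition of the $I_i$). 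Hence $E(G) = \bigcap_{i=1}^{k} E(I_i)$, which is exactly the statement $G = I_1 \cap \cdots \cap I_k$.

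For the reverse direction, suppose there exist interval graphs $I_1, \ldots, I_k$ on the common vertex set $V(G)$ with $G = I_1 \cap \cdots \cap I_k$. Fix an interval representation of each $I_i$, assigning to every $v$ a closed interval $R_i(v) \subseteq \R$ so that $u,v$ are adjacent in $I_i$ iff $R_i(u) \cap R_i(v) \neq \emptyset$. Define $f(v) = R_1(v) \times \cdots \times R_k(v)$, an axis-parallel $k$-box in $\R^k$. Running the same chain of equivalences in reverse, $\{u,v\} \in E(G)$ iff $\{u,v\} \in E(I_i)$ for all $i$ iff $R_i(u) \cap R_i(v) \neq \emptyset$ for all $i$ iff $f(u) \cap f(v) \neq \emptyset$, so $f$ is a valid $k$-box representation of $G$ and therefore $\boxi(G) \leq k$.

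Neither direction presents a genuine obstacle; the argument is essentially a bookkeeping exercise built on the single projection lemma for box intersection. The one point requiring mild care is ensuring that the correspondence is stated for \emph{all} pairs of vertices simultaneously, so that the ``if and only if'' in the box definition matches the ``for all $i$'' quantifier over coordinates cleanly; this is handled by keeping the chain of equivalences pairwise and then quantifying over pairs. I would also note in passing that the degenerate case $k = 1$ recovers the already-observed fact that boxicity-$1$ graphs are exactly the interval graphs, which serves as a sanity check on the statement.
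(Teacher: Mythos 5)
Your proof is correct and follows the same projection/product correspondence that the paper itself sketches informally just before stating the lemma (the paper only outlines the forward direction via projecting boxes onto the axes and cites Roberts for the full statement). Your write-up completes both directions of that same argument rigorously, with the box-intersection-iff-coordinatewise-intersection fact as the key step, so it matches the intended approach.
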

From the above lemma, we get an equivalent definition of boxicity. 

\begin{definition}
\label{definitionBoxicityInterval}
The \emph{boxicity} of a 
graph $G$ is the minimum positive integer $k$ for which there exist $k$ interval graphs
$I_1,\ldots, I_k$ such that $G = I_1 \cap \cdots \cap I_k$.  
\end{definition}

Note that if $G = I_1 \cap \cdots \cap I_k$, then each $I_i$ is a supergraph of $G$. Moreover, for every pair of vertices $u,v \in V(G)$ with $\{u,v\} \notin E(G)$, there exists some $i \in [k]$ such that $\{u,v\} \notin E(I_i)$. Now we are ready to prove the main theorem of this section.

\begin{theorem}
For a hypergraph $H$, $\boxli(H) = \boxi(L(H))$.  
\label{theoremConnectionBoxliPermutation}
\end{theorem}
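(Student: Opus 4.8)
The plan is to prove the equality $\boxli(H) = \boxi(L(H))$ by establishing both inequalities through an explicit correspondence between pairwise suitable families of permutations of $V(H)$ and interval-graph intersection representations of $L(H)$, using the combinatorial reformulation of boxicity in Definition~\ref{definitionBoxicityInterval}. The central observation I would exploit is that a single permutation $\sigma$ of $V(H)$ naturally induces an interval representation on the vertex set $E(H)$ of $L(H)$: to each edge $e \in E(H)$ assign the interval $I_\sigma(e) = [\min_{v \in e}\sigma(v),\ \max_{v \in e}\sigma(v)]$, i.e.\ the smallest closed interval spanning the ranks of the vertices of $e$ under $\sigma$. The key elementary fact, which I would state and verify first, is that for two edges $e,f \in E(H)$ the intervals $I_\sigma(e)$ and $I_\sigma(f)$ are \emph{disjoint} precisely when $\sigma$ separates $e$ and $f$ (i.e.\ $e \prec_\sigma f$ or $f \prec_\sigma e$). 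Note the asymmetry that makes the correspondence work cleanly: if $e$ and $f$ \emph{intersect} as edges of $H$, they share a vertex whose rank lies in both intervals, so $I_\sigma(e) \cap I_\sigma(f) \neq \emptyset$ for \emph{every} $\sigma$; hence adjacent vertices of $L(H)$ are forced adjacent in every induced interval graph, which is exactly what we need for an intersection representation.

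For the inequality $\boxi(L(H)) \leq \boxli(H)$, I would take a pairwise suitable family $\F = \{\sigma_1,\dots,\sigma_k\}$ of minimum size $k = \boxli(H)$ and form the $k$ interval graphs $I_1,\dots,I_k$, where $I_i$ is the intersection graph of the interval assignment $e \mapsto I_{\sigma_i}(e)$ on $E(H)$. I claim $L(H) = I_1 \cap \cdots \cap I_k$. By the observation above, every $I_i$ is a supergraph of $L(H)$, so $L(H) \subseteq \bigcap_i I_i$. Conversely, if $e,f$ are nonadjacent in $L(H)$, they are disjoint edges of $H$, so by pairwise suitability some $\sigma_i$ separates them, whence $I_{\sigma_i}(e)$ and $I_{\sigma_i}(f)$ are disjoint and $\{e,f\} \notin E(I_i)$. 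Thus $\bigcap_i I_i \subseteq L(H)$, giving equality, and Definition~\ref{definitionBoxicityInterval} yields $\boxi(L(H)) \leq k = \boxli(H)$.

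For the reverse inequality $\boxli(H) \leq \boxi(L(H))$, I would start from a representation $L(H) = I_1 \cap \cdots \cap I_k$ with $k = \boxi(L(H))$ interval graphs and manufacture, from each interval graph $I_i$, a permutation $\sigma_i$ of $V(H)$ that separates every pair of disjoint edges that $I_i$ witnesses as nonadjacent. The natural route is to fix an interval representation of $I_i$, assigning to each vertex $e \in E(H)$ of $I_i$ a closed interval $J_i(e)$, and then to order the vertices of $H$ consistently with these intervals. Concretely, I would define $\sigma_i$ by sorting the vertices $v \in V(H)$ according to some point statistic of the intervals of the edges through $v$ (for instance the left endpoint $\leftend(J_i(e))$ of the edge $e$ containing $v$), breaking ties so that vertices of a common edge stay contiguous. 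The goal is to guarantee that whenever two disjoint edges $e,f$ satisfy $J_i(e) < J_i(f)$ (disjoint intervals), the induced $\sigma_i$ places all of $e$ before all of $f$. Then any disjoint pair $e,f$ nonadjacent in $L(H)$ is nonadjacent in some $I_i$, so $J_i(e), J_i(f)$ are disjoint and the corresponding $\sigma_i$ separates $e$ and $f$; hence $\{\sigma_1,\dots,\sigma_k\}$ is pairwise suitable and $\boxli(H) \leq k$.

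The main obstacle I anticipate is precisely this last construction: a single vertex $v$ of $H$ typically lies in many edges, whose intervals in $J_i$ may overlap in complicated ways, so it is not immediately obvious that one can order $V(H)$ so that \emph{every} disjoint-interval pair of edges becomes a separated pair of vertex-sets simultaneously. The delicate point is that assigning each vertex a single rank must be compatible with the interval order of \emph{all} edges containing it at once. I expect the clean fix is to assign to each vertex $v$ the same interval as its ``tightest'' containing edge, or more robustly to observe that one may perturb/shrink the intervals $J_i(e)$ to pairwise-distinct points or to an interval order and then read off a vertex permutation; showing that disjoint edge-intervals force the required $\prec_{\sigma_i}$ relation, and that no spurious separations are needed, is the step requiring the most care. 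This is where I would spend the bulk of the argument, likely by reducing to the case of an interval \emph{order} (distinct endpoints) and checking directly that the endpoint-based ordering of vertices respects edge adjacency.
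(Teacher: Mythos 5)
Your second inequality, $\boxi(L(H)) \le \boxli(H)$, is correct and is essentially the paper's own argument: each permutation $\sigma$ yields the spanning intervals $\left[ \min_{v\in e}\sigma(v), \max_{v\in e}\sigma(v) \right]$, adjacency in $L(H)$ forces intersection in every coordinate, and pairwise suitability supplies, for each disjoint pair, a coordinate with disjoint intervals. The genuine gap is in the other inequality, $\boxli(H) \le \boxi(L(H))$, which you correctly flag as the delicate step but never close. The idea you are missing is the \emph{Helly property} of intervals. Write $L(H) = I_1 \cap \cdots \cap I_b$ and fix an interval representation $f_i$ of $I_i$. For a vertex $u$ of $H$, all edges of $H$ containing $u$ are pairwise adjacent in $L(H)$, hence also in the supergraph $I_i$, so the intervals $\{ f_i(e) : e \ni u \}$ pairwise intersect; by Helly, their common intersection $C_i(u) = \bigcap_{e \ni u} f_i(e)$ is a nonempty interval, and crucially $C_i(u) \subseteq f_i(e)$ for \emph{every} edge $e$ containing $u$. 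Taking $\sigma_i$ to be any linear order on $V(H)$ extending the partial order ``$C_i(u)$ lies entirely to the left of $C_i(v)$'' finishes the proof: if $e, e'$ are disjoint edges with $f_i(e) < f_i(e')$, then every $v \in e$ has $C_i(v) \subseteq f_i(e)$ and every $v' \in e'$ has $C_i(v') \subseteq f_i(e')$, hence $e \prec_{\sigma_i} e'$.

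Neither of the fixes you sketch produces this. Sorting by the left endpoint of the tightest (shortest) containing edge fails: take disjoint edges $e, e'$ with $f_i(e) = [0,10]$ and $f_i(e') = [10.1, 30]$, let $v \in e$ also lie in an edge with interval $[5,6]$, and let $v' \in e'$ also lie in an edge $h$ with $f_i(h) = [0.2, 10.2]$ (legitimate: $f_i(h)$ meets $f_i(e')$, and $I_i$ is only a supergraph of $L(H)$, so its intervals may intersect even for edges disjoint in $H$). The tightest containing edges assign $v$ the value $5$ and $v'$ the value $0.2$, so $v' \prec_{\sigma_i} v$ and $\sigma_i$ fails to separate exactly the pair that $I_i$ was supposed to handle, which breaks the logic of the proof. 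The failure is structural: the interval of a single containing edge of $v'$ need not be contained in $f_i(e')$, whereas the clique region always is. The perturbation idea fares no better --- making endpoints distinct does not tell you where to place a vertex lying on many overlapping edges, and shrinking intervals to points changes the intersection graph. So your outline and one inequality match the paper, but the essential content of the other inequality --- the Helly argument --- is absent.
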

\begin{proof}
First we show that $\boxli(H) \leq \boxi(L(H))$. Let $\boxi(L(H)) = b$. Then, by Lemma \ref{lemmaRoberts}, there exists a collection of $b$ interval graphs, say $\mathcal{I} = \{I_1, \ldots, I_b\}$, whose intersection is $L(H)$. For each $i \in [b]$, let $f_i$ be an interval representation of $I_i$. For each $u \in V(H)$, let $E_H(u) = \{e \in E(H) : u \in e\}$ be the set of edges of $H$ containing $u$. Consider an $i \in [b]$ and a vertex  $u \in V(H)$. The closed interval $C_i(u) = \bigcap_{e \in E_H(u)} f_i(e)$ is called the {\em clique region} of $u$ in $f_i$. Since any two edges in $E_H(u)$ are adjacent in $L(H)$, the corresponding intervals have non-empty intersection in $f_i$ . By the Helly property of intervals, $C_i(u)$ is non-empty. We define a permutation $\sigma_i$ of $V(H)$ from $f_i$ such that $\forall u,v \in V(G)$, $C_i(u) < C_i(v) \implies u \prec_{\sigma_i} v$. It suffices to prove that $\{\sigma_1, \ldots, \sigma_b\}$ is a family of permutations that is pairwise suitable for $H$.

Consider two disjoint edges $e, e'$ in $H$. Hence $\{e, e' \} \notin E(L(H))$ and since $L(H) = \bigcap_{i=1}^b I_i$, there exists an interval graph, say $I_i \in \mathcal{I}$, such that $\{e, e'\} \notin E(I_i)$, i.e., $f_i(e) \cap f_i(e') = \emptyset$. Without loss of generality, assume $f_i(e) < f_i(e')$. For any $v \in e$ and any $v' \in e'$, since $C_i(v) \subseteq f_i(e)$ and $C_i(v') \subseteq f(e')$, we have $C_i(v) < C_i(v')$, i.e. $v \prec_{\sigma_i} v'$. Hence $e \prec_{\sigma_i} e'$. Thus the family $\{ \sigma_1, \ldots, \sigma_b \}$ of permutations is pairwise suitable for $H$. 

Next we show that $\boxi(L(H)) \leq \boxli(H)$. Let $\boxli(H) = p$ and let $\F = \{\sigma_1, \ldots, \sigma_p\}$ be a pairwise suitable family of permutations for $H$. From each permutation $\sigma_i$, we shall construct an interval graph $I_i$ such that $L(H) = \bigcap_{i=1}^{p}I_i$. Then by Lemma \ref{lemmaRoberts}, $\boxi(L(H)) \leq \boxli(H)$. 

For a given $i \in [p]$, to each edge $e \in E(H)$, we associate the closed interval 
$$f_i(e) = \left[ \min_{v \in e}\sigma_i(v) ~,~ \max_{v \in e}\sigma_i(v) \right],$$ 
and let $I_i$ be the intersection graph of the intervals $f_i(e), e \in E(H)$. Let $e, e' \in V(L(H))$. If $e$ and  $e'$ are adjacent in $L(H)$, let $v \in e \cap e'$. Then $\sigma_i(v) \in f_i(e) \cap f_i(e'),~\forall i \in [p]$. Hence $e$ and $e'$ are adjacent in $I_i$ for every $i \in [p]$. If $e$ and $e'$ are not adjacent in $L(H)$, then there is a permutation $\sigma_i \in \F$ such that either $e \prec_{\sigma_i} e'$ or $e' \prec_{\sigma_i} e$. Hence by construction $f_i(e) \cap f_i(e') = \emptyset$ and so $e$ and $e'$ are not adjacent in $I_i$. This completes the proof.
\end{proof}


\section{Upper bounds}
\label{sectionUpperBounds}

For graphs, sometimes we work with a notion of suitability that is stronger than the pairwise suitability of Definition \ref{definitionPairwiseSuitable}. This will facilitate easy proofs for some results to come later in this article. 


\begin{definition}
\label{definition3Mixing}
For a graph $G$, a family $\F$ of permutations of $G$ is \emph{$3$-mixing} if, for every two adjacent edges $\{a,b\}, \{a,c\} \in E(G)$, there exists a permutation $\sigma \in \F$ such that either $b \prec_{\sigma} a \prec_{\sigma} c$ or $c \prec_{\sigma} a \prec_{\sigma} b$. 
\end{definition}

Notice that a family of permutations $\mathcal{F}$ of $V(G)$ is pairwise suitable and $3$-mixing for $G$ if, for every two edges $e,f \in E(G)$, there exists a permutation $\sigma \in \mathcal{F}$ such that either $e \preceq_{\sigma} f$ or $f \preceq_{\sigma} e$.  Let $\boxlistar(G)$ denote the cardinality of a smallest family of permutations that is pairwise suitable and $3$-mixing for $G$. From their definitions, $\boxli(G) \leq \boxlistar(G)$.

We begin with the following two straightforward observations. 

\begin{observation}
\label{observationMonotonicity}
$\boxli(G)$ and $\boxlistar(G)$ are monotone increasing properties, i.e., $\boxli(G') \leq \boxli(G)$ and $\boxlistar(G') \leq \boxlistar(G)$ for every subgraph $G'$ of $G$.  
\end{observation}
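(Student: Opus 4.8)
The plan is to prove the monotonicity claim directly from the definitions by showing that any family of permutations witnessing pairwise suitability (respectively, pairwise suitability and $3$-mixing) for $G$ can be ``restricted'' to a subgraph $G'$ while preserving the relevant separation properties. The key observation is that the defining conditions of both $\boxli$ and $\boxlistar$ quantify over \emph{disjoint} edges (respectively \emph{adjacent} edges), and every edge of $G'$ is an edge of $G$, so the witnessing permutation for a given pair of edges in $G$ already does the required job for that same pair viewed in $G'$.

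First I would fix a subgraph $G'$ of $G$ and set $k = \boxli(G)$, letting $\F = \{\sigma_1, \ldots, \sigma_k\}$ be a pairwise suitable family for $G$. For each $i$, I would define $\sigma_i'$ to be the permutation of $V(G')$ induced by $\sigma_i$, i.e.\ the unique bijection $V(G') \to [|V(G')|]$ that preserves the relative order that $\sigma_i$ assigns to the vertices of $V(G') \subseteq V(G)$. The point is that for vertices $u, v \in V(G')$ we have $u \prec_{\sigma_i'} v$ if and only if $u \prec_{\sigma_i} v$, so $\prec$-relations among vertices of $G'$ are inherited verbatim. Now take any two disjoint edges $e, f \in E(G')$. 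Since $G'$ is a subgraph of $G$, we have $e, f \in E(G)$ and they are still disjoint, so by pairwise suitability of $\F$ there is some $\sigma_i$ separating $e$ and $f$; by the order-preservation just noted, $\sigma_i'$ separates $e$ and $f$ as well. Hence $\F' = \{\sigma_1', \ldots, \sigma_k'\}$ is pairwise suitable for $G'$, giving $\boxli(G') \leq k = \boxli(G)$.

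The argument for $\boxlistar$ is identical in structure: starting from a family that is pairwise suitable and $3$-mixing for $G$, I would restrict each permutation to $V(G')$ as above, and then check the two defining conditions separately. For disjoint edges the check is exactly as before; for the $3$-mixing condition one takes two adjacent edges $\{a,b\}, \{a,c\} \in E(G')$, observes they are adjacent edges of $G$, invokes a permutation $\sigma_i$ placing $a$ between $b$ and $c$, and notes that the induced $\sigma_i'$ preserves the betweenness relation $b \prec a \prec c$ (or its reverse) since all three vertices lie in $V(G')$. This yields $\boxlistar(G') \leq \boxlistar(G)$.

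I do not expect any genuine obstacle here, as the statement is essentially a formal consequence of the definitions; the only thing requiring care is the bookkeeping around restricting a permutation of $V(G)$ to a permutation of $V(G')$ and verifying explicitly that all the $\prec_{\sigma}$ and betweenness relations among surviving vertices are preserved under this restriction. The mildly delicate point, if any, is simply to state cleanly that an induced sub-permutation preserves relative order, which is immediate but worth recording once and then reusing for both parts.
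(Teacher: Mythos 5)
Your proof is correct and is exactly the argument the paper has in mind: the paper states this as a ``straightforward observation'' with no written proof, since every pair of disjoint (or adjacent) edges of $G'$ is such a pair in $G$, and the witnessing permutations restrict to $V(G')$ with relative order preserved. Your careful handling of the restriction of a permutation of $V(G)$ to a permutation of $V(G')$ is precisely the bookkeeping the paper leaves implicit.
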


\begin{observation}
\label{observationDisjointComponents}
Let $G_1, \ldots, G_r$ be a collection of disjoint components that form a graph $G$, i.e, $V(G) = \biguplus_{i=1}^r V(G_i)$ and $E(G) = \biguplus_{i=1}^r E(G_i)$. If $\boxli(G) \geq 1$ for some $i \in [r]$, then $\boxli(G) = \max_{i \in [r]} \boxli(G_i)$. 
\end{observation}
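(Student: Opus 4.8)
The plan is to prove Observation \ref{observationDisjointComponents} by establishing inequalities in both directions, relying on Observation \ref{observationMonotonicity} for one direction and a direct ``stacking'' construction of permutations for the other. Throughout, write $G_1, \ldots, G_r$ for the disjoint components with $V(G) = \biguplus_{i=1}^r V(G_i)$ and $E(G) = \biguplus_{i=1}^r E(G_i)$, and let $m = \max_{i \in [r]} \boxli(G_i)$.

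\medskip
\noindent\textbf{Easy direction.}
First I would observe that each $G_i$ is a subgraph of $G$, so Observation \ref{observationMonotonicity} immediately gives $\boxli(G_i) \leq \boxli(G)$ for every $i \in [r]$, and hence $m = \max_{i} \boxli(G_i) \leq \boxli(G)$. This handles the lower bound on $\boxli(G)$ with no further work.

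\medskip
\noindent\textbf{Hard direction: $\boxli(G) \leq m$.}
For the reverse inequality I would construct a pairwise suitable family of $m$ permutations of $V(G)$ out of the optimal families for the individual components. For each $i \in [r]$, fix a pairwise suitable family $\F_i = \{\sigma_{i,1}, \ldots, \sigma_{i,\boxli(G_i)}\}$ for $G_i$ of size $\boxli(G_i) \leq m$; if $\boxli(G_i) < m$, pad $\F_i$ up to exactly $m$ permutations by repeating (say) $\sigma_{i,1}$ as needed, so that each $\F_i = \{\sigma_{i,1}, \ldots, \sigma_{i,m}\}$ has exactly $m$ entries (this padding is where the hypothesis that $m \geq 1$ is used, since each $\boxli(G_i) \geq 1$ guarantees $\F_i$ is nonempty to begin with). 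Now for each $j \in [m]$ define a permutation $\sigma_j$ of all of $V(G)$ by concatenating the blocks in a fixed component order: place the vertices of $V(G_1)$ first, ordered by $\sigma_{1,j}$, then the vertices of $V(G_2)$ ordered by $\sigma_{2,j}$, and so on up to $V(G_r)$. Concretely, for $u, v$ in the same component $G_i$ set $u \prec_{\sigma_j} v$ iff $u \prec_{\sigma_{i,j}} v$, and for $u \in V(G_{i})$, $v \in V(G_{i'})$ with $i < i'$ set $u \prec_{\sigma_j} v$. Set $\F = \{\sigma_1, \ldots, \sigma_m\}$.

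\medskip
\noindent\textbf{Verifying pairwise suitability of $\F$.}
Take any two disjoint edges $e, f \in E(G)$; I need a permutation in $\F$ separating them. There are two cases. If $e$ and $f$ lie in the same component $G_i$, then since $\F_i$ is pairwise suitable for $G_i$ there is a $\sigma_{i,j}$ separating them within $G_i$; because $\sigma_j$ restricted to $V(G_i)$ agrees with $\sigma_{i,j}$ and all vertices of $e \cup f \subseteq V(G_i)$ occupy a contiguous block in $\sigma_j$, the permutation $\sigma_j$ separates $e$ and $f$. If instead $e \subseteq V(G_i)$ and $f \subseteq V(G_{i'})$ with $i \neq i'$ (edges cannot straddle two components since $E(G) = \biguplus_i E(G_i)$), then the block structure of every $\sigma_j$ already separates them: all of $e$ precedes all of $f$ (or vice versa) according to the component order, so any single $\sigma_j \in \F$ works. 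In both cases some member of $\F$ separates $e$ and $f$, so $\F$ is pairwise suitable and $\boxli(G) \leq |\F| = m$. Combining the two directions yields $\boxli(G) = m = \max_{i \in [r]} \boxli(G_i)$.

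\medskip
\noindent The main subtlety, rather than a genuine obstacle, is the bookkeeping around the padding and the role of the hypothesis $\boxli(G_i) \geq 1$: the statement as phrased excludes the degenerate situation in which every component is edgeless (so that $\boxli$ would be $0$ and no nonempty family is forced), and I would make sure the concatenation construction is well-defined precisely because each $\F_i$ can be taken nonempty. The cross-component case is essentially free, so the real content is just checking that contiguous blocking preserves within-component separations.
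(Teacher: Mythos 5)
The paper states this observation without any proof (it is introduced as one of "two straightforward observations"), so there is no paper proof to compare against; your argument -- monotonicity for $\max_i \boxli(G_i) \leq \boxli(G)$ and block-concatenation of per-component families for the reverse inequality -- is exactly the intended one, and its core is correct. One slip worth fixing: you justify the padding by asserting that "each $\boxli(G_i) \geq 1$ guarantees $\F_i$ is nonempty," but the hypothesis only guarantees $\boxli(G_i) \geq 1$ for \emph{some} $i$. A component $G_k$ in which no two edges are disjoint (a star, a triangle, a single edge -- not only edgeless components, as your closing remark suggests) has $\boxli(G_k) = 0$, so its optimal family is empty and "repeating $\sigma_{k,1}$" is undefined. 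The repair is trivial: for such components take an arbitrary permutation of $V(G_k)$ as the block order in every $\sigma_j$; within-block disjoint pairs are then vacuous, and the real role of the hypothesis is to make $m \geq 1$, so that $\F$ itself is nonempty and the cross-component pairs (which is where the conclusion would otherwise fail, e.g.\ for two disjoint single-edge components) get separated by the block structure.
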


A nontrivial generalisation of Observation \ref{observationDisjointComponents}, when there are edges across the parts, is given in Lemma \ref{lemmaMaxPairsParts}. Now we show an upper bound on $\boxli(G)$ in terms of $|V(G)|$.


\subsection{Separation dimension and size of a hypergraph}
\label{sectionBoxliSize}


\begin{theorem}
\label{theoremHypergraphSizeUpperbound}
For any rank-$r$ hypergraph $H$ on $n$ vertices 
$$ \boxli(H) \leq \frac{e \ln 2}{\pi \sqrt{2}} 4^r \sqrt{r} \log n.$$
\end{theorem}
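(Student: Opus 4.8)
The plan is to prove this bound by a direct probabilistic argument, constructing a pairwise suitable family of random permutations and showing that with positive probability it separates all pairs of disjoint edges. Concretely, I would take a family $\F = \{\sigma_1, \ldots, \sigma_k\}$ of $k$ independent uniformly random permutations of $V(H)$, where $k$ is to be determined. For a fixed pair of disjoint edges $e, f \in E(H)$ with $|e| = a$ and $|f| = b$ (so $a, b \leq r$), I would compute the probability that a single random permutation $\sigma$ fails to separate $e$ and $f$, i.e., that neither $e \prec_\sigma f$ nor $f \prec_\sigma e$ holds.

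The key combinatorial estimate is the probability that a single uniformly random permutation $\sigma$ \emph{does} separate two disjoint edges of sizes $a$ and $b$. Since only the relative order of the $a + b$ vertices in $e \cup f$ matters, this probability equals the number of interleavings in which all of $e$ precedes all of $f$ (or vice versa), divided by the total number $\binom{a+b}{a}$ of ways to interleave them. This gives $\Pr[\sigma \text{ separates } e, f] = 2 / \binom{a+b}{a}$, which is minimised over $a, b \leq r$ when $a = b = r$, yielding a success probability of at least $2 / \binom{2r}{r}$. Hence the failure probability for one permutation is at most $1 - 2/\binom{2r}{r}$. By independence, the probability that all $k$ permutations fail to separate this particular pair is at most $\left(1 - 2/\binom{2r}{r}\right)^k$.

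Next I would apply a union bound over all pairs of disjoint edges. A crude but sufficient count bounds the number of such pairs by the number of pairs of vertex subsets of size at most $r$, which is at most $n^{2r}$ (or more carefully $\binom{n}{r}^2 \le n^{2r}$, absorbing constants). The family fails to be pairwise suitable only if some pair remains unseparated, so the overall failure probability is at most $n^{2r} \left(1 - 2/\binom{2r}{r}\right)^k$. Using $1 - x \le e^{-x}$, it suffices to choose $k$ so that $n^{2r} \exp\!\left(-2k/\binom{2r}{r}\right) < 1$, i.e. $k > r \binom{2r}{r} \ln n$. To extract the stated constant I would invoke the central binomial estimate $\binom{2r}{r} \le 4^r / \sqrt{\pi r}$, turning $r\binom{2r}{r}\ln n$ into roughly $\frac{\ln n}{\sqrt{\pi}} 4^r \sqrt{r}$; converting $\ln n$ to $\log n$ contributes the $\ln 2$ factor and careful bookkeeping of the constant $2$ in the exponent yields the claimed coefficient $\frac{e \ln 2}{\pi \sqrt{2}}$.

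The main obstacle is not the probabilistic skeleton, which is routine, but squeezing out the precise leading constant $\frac{e \ln 2}{\pi \sqrt{2}}$. This requires sharp rather than asymptotic forms of both the binomial coefficient bound $\binom{2r}{r} \le \frac{4^r}{\sqrt{\pi r}}$ and the counting of edge pairs, and likely a more delicate argument than the crude $n^{2r}$ union bound I sketched above — presumably the authors count unordered pairs and track the factor of $2$ from the two favorable orderings more carefully, and may use a tighter entropy-style bound on the number of relevant pairs. I would need to verify that the worst case $a = b = r$ genuinely dominates and that the constant survives the passage through $1 - x \le e^{-x}$ and the $\log$-to-$\ln$ conversion; the factor $e$ in the numerator suggests an exponential estimate is being applied near its tightest regime, so I would watch that step most closely.
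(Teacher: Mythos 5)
Your proposal is correct and follows essentially the same route as the paper: independent uniform random permutations, the worst-case separation probability $q \geq 2(r!)^2/(2r)! = 2/\binom{2r}{r}$ attained at $a=b=r$, a union bound over fewer than $n^{2r}$ pairs of edges, and solving for the family size. Your worry about recovering the constant is unfounded: the paper simply bounds $2(r!)^2/(2r)!$ via the two-sided Stirling estimates $\sqrt{2\pi}\,k^{k+1/2}e^{-k} \leq k! \leq e\,k^{k+1/2}e^{-k}$ (which is where the factor $e$ comes from), giving $q \geq \frac{2\pi\sqrt{2}}{e}\sqrt{r}/4^r$ and hence exactly the stated coefficient, whereas your sharper estimate $\binom{2r}{r} \leq 4^r/\sqrt{\pi r}$ yields the smaller coefficient $\frac{\ln 2}{\sqrt{\pi}} < \frac{e\ln 2}{\pi\sqrt{2}}$ (equivalent to $2\pi < e^2$), so the stated bound follows a fortiori with the same crude union bound you sketched.
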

\begin{proof}
Consider family $\F$ of $m$ permutations of $[n]$ chosen independently and uniformly from the $n!$ possible ones. For an arbitrary pair of disjoint edges $e, f \in E(H)$, the probability $q$ that $e$ and $f$ are separated in $\sigma$ is at least $2 (r!)^2 / (2r)!$. Using Stirling's bounds $\sqrt{2\pi} k^{k + 1/2} e^{-k} \leq k! \leq e k^{k + 1/2} e^{-k}$, we get $q \geq \frac{2\pi\sqrt{2}}{e} \sqrt{r} / {4^r}$. The probability of the (bad) event that $e$ and $f$ are not separated in any of the $m$ permutations in $\F$ is at most $(1 - q)^m$. Since the number of non-empty edges in $H$ is less than $n^r$, by the union bound, the probability $p$ that there exists some pair of edges which is not separated in any of the permutations in $\F$ is less than $n^{2r}(1-q)^r \leq e^{2r \ln n}e^{-qm}$. Hence if $2r \ln n \leq qm$, then $p < 1$ and there will exist some  family $\F$ of size $m$ such that every pair of edges is separated by some permutation in $\F$. So $m \geq \frac{2r}{q} \ln n$ suffices. So $\boxli(H) \leq \frac{e}{\pi\sqrt{2}} 4^r \sqrt{r} \ln n$. 
\end{proof}


\subsubsection*{Tightness of Theorem \ref{theoremHypergraphSizeUpperbound}}

Let $K_n^r$ denote a complete $r$-uniform graph on $n$ vertices. Then by Theorem \ref{theoremHypergraphSizeLowerbound}, $\boxli(K_n^r) \geq \frac{1}{2^7} \frac{4^r}{\sqrt{r-2}} \log n$ for $n$ sufficiently larger than $r$. Hence the bound in Theorem \ref{theoremHypergraphSizeUpperbound} is tight by factor of $64 r$.

\begin{theorem}
\label{theoremBoxliSize}
For a graph $G$ on $n$ vertices,  $\boxli(G) \leq \boxlistar(G) \leq 6.84 \log n$. 
\end{theorem}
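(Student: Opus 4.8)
The plan is to prove the stronger statement $\boxlistar(G) \leq 6.84 \log n$ by a probabilistic argument nearly identical in spirit to the proof of Theorem \ref{theoremHypergraphSizeUpperbound}, specialised to the graph case $r=2$ but strengthened to handle the $3$-mixing condition as well. Recall from the remark following Definition \ref{definition3Mixing} that a family $\F$ is simultaneously pairwise suitable and $3$-mixing precisely when, for every \emph{pair of edges} $e,f \in E(G)$ (adjacent or disjoint), some $\sigma \in \F$ satisfies $e \preceq_\sigma f$ or $f \preceq_\sigma e$. So it suffices to bound, for each such pair, the probability that a uniformly random permutation fails to order them in this ``clean'' way, and then apply the union bound over all $\binom{|E(G)|}{2}$ pairs.

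First I would choose a family $\F$ of $m$ permutations of $V(G)$ independently and uniformly at random. Fix two edges $e,f \in E(G)$. The critical computation is the probability $q$ that a single random permutation $\sigma$ achieves $e \preceq_\sigma f$ or $f \preceq_\sigma e$, and the key observation is that this probability is bounded below by a positive constant. I would split into two cases. When $e$ and $f$ are disjoint (four distinct vertices), the condition $e \preceq_\sigma f$ reduces to $e \prec_\sigma f$, and among the $4! $ relative orderings of the four points, the favourable ones are those placing both vertices of one edge before both of the other; this gives $q = 2 \cdot \frac{2!\,2!}{4!} = \frac{2}{6} = \frac{1}{3}$. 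When $e$ and $f$ share a vertex $a$, say $e = \{a,b\}$ and $f = \{a,c\}$, the condition $e \preceq_\sigma f$ (equivalently $b \prec_\sigma a \prec_\sigma c$ or $c \prec_\sigma a \prec_\sigma b$) asks that $a$ lie between $b$ and $c$, which happens for $\frac{1}{3}$ of the orderings of the three vertices. Either way $q \geq \tfrac13$, so the probability that a fixed pair is \emph{not} cleanly separated by a given $\sigma$ is at most $1 - \tfrac13 = \tfrac23$.

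Next I would bound the failure probability. By independence, the probability that a fixed pair of edges is not cleanly separated by any of the $m$ permutations is at most $(2/3)^m$. Since $|E(G)| \leq \binom{n}{2} < n^2/2$, the number of pairs of edges is less than $n^4/8 < n^4$, so by the union bound the probability that some pair is never cleanly separated is at most $n^4 (2/3)^m = \exp\!\left(4 \ln n - m \ln(3/2)\right)$. This is strictly less than $1$ as soon as $m \ln(3/2) > 4 \ln n$, i.e. $m > \frac{4 \ln n}{\ln(3/2)} = \frac{4 \log n}{\log(3/2)}$. Hence some family $\F$ of this size is pairwise suitable and $3$-mixing, giving $\boxlistar(G) \leq \frac{4}{\log(3/2)} \log n$. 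A numerical check gives $\log(3/2) = \log 3 - 1 \approx 0.585$, so $\frac{4}{0.585} \approx 6.84$, matching the claimed constant. (If the raw bound comes out marginally above $6.84$, I would tighten the edge count to $|E(G)| \leq \binom n2$, which saves a constant factor inside the logarithm and absorbs any slack.)

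The argument is essentially routine once the per-pair success probability is pinned down; the only genuinely delicate point is verifying that the $3$-mixing requirement does not lower the constant $q$ below the disjoint-edge value, so that a single uniform bound $q \geq 1/3$ serves both cases and no separate, worse estimate is forced on the shared-vertex pairs. I expect the main obstacle to be purely bookkeeping: ensuring the constant works out to exactly $6.84$ rather than a slightly larger value, which hinges on using the tight edge count and on confirming that $\ln(3/2)$, not some smaller quantity, is the correct exponent in the Chernoff-style bound.
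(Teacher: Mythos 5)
Your proposal is correct and takes essentially the same approach as the paper's proof: random permutations, the per-pair probability $1/3$ of achieving $e \preceq_{\sigma} f$ or $f \preceq_{\sigma} e$ (computed separately for disjoint and shared-vertex pairs), and a union bound over fewer than $n^4$ pairs yielding the constant $4/\log(3/2) \approx 6.84$. The only cosmetic difference is that the paper first reduces to $K_n$ via monotonicity before randomising, whereas you work with $E(G)$ directly; this changes nothing.
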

\begin{proof}
From the definitions of $\boxli(G)$ and $\boxlistar(G)$ and Observation \ref{observationMonotonicity}, we have $\boxli(G) \leq \boxlistar(G) \leq \boxlistar(K_n)$, where $K_n$ denotes the complete graph on $n$ vertices. Here we prove that  $\boxlistar(K_n) \leq 6.84 \log n$. 

Choose $r$ permutations, $\sigma_1, \ldots, \sigma_r$, independently and uniformly at random from the $n!$ distinct permutations of $[n]$. Let $e$, $f$ be two distinct edges of $K_n$. The probability that $e \preceq_{\sigma_i} f$ is $1/6$ for each $i \in [r]$. ($4$ out of $4!$ outcomes are favourable when $e$ and $f$ are non-adjacent and $1$ out of $3!$ outcomes is favourable otherwise.) 
 
\begin{eqnarray*}
Pr[(e \preceq_{\sigma_i} f)~or~(f \preceq_{\sigma_i} e)] & = & Pr[(e \preceq_{\sigma_i} f)] + Pr[(f \preceq_{\sigma_i} e)] \\
& = & \frac{1}{6} + \frac{1}{6} \\
& = & \frac{1}{3}
\end{eqnarray*}
Therefore, 
\begin{eqnarray*}
Pr[\bigcap_{i=1}^r\left((e \npreceq_{\sigma_i} f) \cap (f \npreceq_{\sigma_i} e)\right)] & = & \left(Pr[(e \npreceq_{\sigma_i} f) \cap (f \npreceq_{\sigma_i} e)]\right)^r \\
& = & (1-\frac{1}{3})^r \\
& = & \left(\frac{2}{3}\right)^r 
\end{eqnarray*}
\begin{eqnarray*}
Pr[\bigcup_{\forall\mbox{ pairs of distinct edges  }e,f}\left(\bigcap_{i=1}^r\left((e \nprec_{\sigma_i} f) \cap (f \nprec_{\sigma_i} e)\right)\right)] < n^4 \left(\frac{2}{3}\right)^r 
\end{eqnarray*}
Substituting for $r= 6.84\log n$ in the above inequality, we get 
\begin{eqnarray*}
Pr[\bigcup_{\forall\mbox{ pairs of distinct edges  }e,f}\left(\bigcap_{i=1}^r\left((e \nprec_{\sigma_i} f) \cap (f \nprec_{\sigma_i} e)\right)\right)] < 1
\end{eqnarray*}
That is, there exists a family of permutations of $V(K_n)$ of cardinality at most $6.84\log n$ which is pairwise suitable and $3$ mixing for $K_n$.  
\end{proof}

\subsubsection*{Tightness of Theorem \ref{theoremBoxliSize}}

Let $K_n$ denote a complete graph on $n$ vertices. Since $\omega(K_n) = n$, it follows from Corollary \ref{corollaryBoxliOmega} that $\boxli(K_n) \geq \log \floor{n/2}$. Hence the bound proved in Theorem \ref{theoremBoxliSize} is tight up to a constant factor.  

\subsubsection*{An auxiliary lemma}
Using Theorem \ref{theoremBoxliSize}, we shall now prove a lemma that will be used later in proving bounds for $\boxli(G)$ in terms of maximum degree, star chromatic number, and acyclic chromatic number. 

\begin{lemma}
\label{lemmaMaxPairsParts}
Let $P_G=\{V_1, \ldots, V_r\}$ be a partitioning of the vertices of a graph $G$, i.e., $V(G) = V_1 \uplus \cdots \uplus V_r$. Let $\hat{\boxli}(P_G) = \max_{i,j \in [r]} \boxli(G[V_i \cup V_j])$. Then, $\boxli(G) \leq 13.68 \log r + \hat{\boxli}(P_G) r$.   
\end{lemma}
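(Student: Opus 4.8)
The plan is to build a pairwise suitable family for $G$ as the union of two groups of permutations: a ``global'' group of size $\le 13.68\log r$ obtained by lifting permutations of the $r$ parts to $V(G)$, and a ``local'' group of size $\le \hat{\boxli}(P_G)\,r$ assembled from the separating families of the two-part subgraphs $G[V_i\cup V_j]$. For two disjoint edges $e,f$, write $P(e),P(f)\subseteq[r]$ for the sets of parts meeting $e$ and $f$; each has size at most $2$. I would split the verification according to $|P(e)\cup P(f)|$ and whether $P(e)\cap P(f)=\emptyset$.

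\emph{Global group.} First I would take a family $\mathcal{S}$ of permutations of $[r]$ that is pairwise suitable and $3$-mixing for $K_r$ (Definition \ref{definition3Mixing}); by Theorem \ref{theoremBoxliSize}, $|\mathcal{S}|\le 6.84\log r$. Fix an arbitrary global linear order $<_0$ on $V(G)$. For each $\tau\in\mathcal{S}$ I form \emph{two} part-respecting permutations of $V(G)$: both order the vertices primarily by the $\tau$-rank of their part, breaking ties within a part by $<_0$ in one permutation and by its reverse in the other. This yields $\le 13.68\log r$ permutations. When $P(e)\cap P(f)=\emptyset$, pairwise suitability of $\mathcal S$ gives a $\tau$ with $P(e)\prec_{\tau}P(f)$, and then the corresponding part-respecting permutation places every vertex of $e$ before every vertex of $f$; the internal tie-break is irrelevant in this case.

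The delicate case, which I expect to be the main obstacle, is when $e$ and $f$ share exactly one part and together meet three parts: say $P(e)=\{a,b\}$, $P(f)=\{a,c\}$ with $a,b,c$ distinct, $e=\{u,x\}$, $f=\{w,y\}$, $u,w\in V_a$ (so $u\ne w$), $x\in V_b$, $y\in V_c$. Here no permutation of the parts can separate $P(e)$ from $P(f)$, so I would instead invoke the $3$-mixing property on the adjacent pair $\{a,b\},\{a,c\}$ of $K_r$ to get $\tau\in\mathcal S$ placing $a$ between $b$ and $c$, say $b\prec_{\tau}a\prec_{\tau}c$. A direct check shows that a part-respecting permutation with this part order separates $e$ and $f$ precisely when its internal order on $V_a$ places $u$ before $w$; since exactly one of the two internal tie-breaks ($<_0$ or its reverse) does this \emph{while keeping the part order $\tau$ fixed}, one of the two permutations built from $\tau$ separates $e$ and $f$. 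Coupling the part order with the within-part order inside a single permutation is the crux, and it is why I reverse only the internal order rather than the whole permutation (reversing everything would preserve an unwanted interleaving).

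\emph{Local group.} The remaining pairs satisfy $|P(e)\cup P(f)|\le 2$, i.e. both edges lie in some $G[V_i\cup V_j]$ (allowing $i=j$). For each such pair choose a pairwise suitable family $\{\rho^{ij}_1,\dots,\rho^{ij}_{\hat{\boxli}}\}$ for $G[V_i\cup V_j]$, of size $\le\hat{\boxli}(P_G)$ by definition. To make the count additive rather than $\binom{r}{2}$-fold, I would take a proper edge colouring of $K_r$ into at most $r$ matchings $M_1,\dots,M_{\le r}$. For a matching $M_k$ and index $\ell$, the pairs in $M_k$ are vertex-disjoint at the part level, so the sets $V_i\cup V_j$ with $\{i,j\}\in M_k$ are pairwise disjoint; I can therefore concatenate the orders $\rho^{ij}_\ell$ (appending any unmatched part arbitrarily) into one permutation $\pi_{k,\ell}$ of $V(G)$ that simultaneously realises every $\rho^{ij}_\ell$ in the class. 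This gives $\le\hat{\boxli}(P_G)\,r$ permutations, and for any $e,f$ lying in a common $G[V_i\cup V_j]$ the colour class containing $\{i,j\}$ supplies an $\ell$ with $\rho^{ij}_\ell$, hence $\pi_{k,\ell}$, separating them (the within-part pairs $i=j$ are covered because every part is matched in some colour class). Combining the two groups yields a pairwise suitable family for $G$ of size $\le 13.68\log r+\hat{\boxli}(P_G)\,r$. The only remaining points requiring care are the small-$r$ degeneracies, handled directly, and the singleton-versus-pair separations in the global group, which follow from pairwise suitability after padding a singleton part with a spare one.
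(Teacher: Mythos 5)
Your proposal is correct and follows essentially the same route as the paper: the paper's families $\F_{r+1},\F_{r+2}$ are exactly your two part-respecting permutations per member of a pairwise suitable and $3$-mixing family for $K_r$ (same part order, reversed internal orders), and its families $\F_1,\ldots,\F_r$ are your matching-indexed concatenations of the two-part families obtained from a proper edge colouring of $K_r$, giving the identical count $13.68\log r + \hat{\boxli}(P_G)\,r$. The only cosmetic difference is the case of an edge inside $V_i$ versus an edge crossing $V_j,V_k$ with $i,j,k$ distinct: instead of padding the singleton part with a spare one (which needs $r\geq 4$ and leaves small $r$ to ad hoc handling), the paper applies $3$-mixing to the adjacent pair $\{h_j,h_i\},\{h_j,h_k\}$, which forces $h_i$ to an end of the part order and settles this case uniformly for all $r$.
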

\begin{proof}
Let $H$ be a complete graph with $V(H) = \{h_1, \ldots , h_r\}$. Let $\mathcal{M} = \{M_1, \ldots ,M_r \}$ be a collection of matchings of $H$ such that each edge is present in at least one matching $M_i$. It is easy to see that there exists such a collection (Vizing's Theorem on edge colouring - Theorem $5.3.2$ in \cite{Diest}).
 For each $i \in [r]$, let $G_i$ be a subgraph of $G$ such that $V(G_i) = V(G)$ and for a pair of vertices $u \in V_a$, $v \in V_b$, $\{u,v\} \in E(G_i)$ if $a=b$ or $\{h_a,h_b\} \in M_i$. Note that $G_i$ is made of $|M_i|$ disjoint components. Let $\F_i$ be a family of permutations that is pairwise suitable for $G_i$ such that $|\F_i| = \boxli(G_i)$. By Observation \ref{observationDisjointComponents}, we have $|\F_i| \leq \hat{\boxli}(P_G)$. 

From Theorem \ref{theoremBoxliSize}, $\boxlistar(H) \leq 6.84 \log r$. Let $\E$ be a family of permutations that is pairwise suitable and $3$-mixing for $H$ such that $|\E| = \boxlistar(H) \leq 6.84 \log r$. We construct two families of permutations, namely $\F_{r+1}$ and $\F_{r+2}$, of $V(G)$ from $\E$ such that $|\F_{r+1}| = |\F_{r+2}| = |\E|$. Corresponding to each permutation $\sigma \in \E$, we construct $\tau_{\sigma} \in \F_{r+1}$ and $\kappa_{\sigma} \in \F_{r+2}$ as follows. If $h_i \prec_{\sigma} h_j$, then we have $V_i \prec_{\tau_{\sigma}} V_j$ and $V_i \prec_{\kappa_{\sigma}} V_j$. Moreover, for each $i \in [r]$ and for distinct $v,v' \in V_i$, $v \prec_{\tau_{\sigma}} v'\iff v' \prec_{\kappa_{\sigma}} v$. 
\begin{claim}
\label{claimMaxPairsParts}
$\F = \bigcup_{i=1}^{r+2}\F_i$ is a pairwise-suitable family of permutations for $G$. 
\end{claim}

We prove the claim by showing that for every pair of non-adjacent edges $e, e' \in E(G)$, there is a $\sigma \in \F$ such that $e \prec_{\sigma} e'$ or $e' \prec_{\sigma} e$. We call an edge $e$ in $G$ a \emph{crossing edge} if there exists distinct $i,j \in [r]$  such that $e$ has its endpoints in $V_i$ and $V_j$. Otherwise $e$ is called a \emph{non-crossing edge}. Consider any two disjoint edges $\{a,b\}, \{c,d\}$ in $G$. Let $a \in V_i, b \in V_j, c \in V_k$ and $d \in V_l$. If $|\{i,j,k,l\}| \leq 2$, then both the edges belong to some $G_p, p \in [r]$ and hence are separated by a permutation in $\F_p$. If $|\{i,j,k,l\}| = 3$, then the two edges are separated by a permutation in $\F_{r+1}$ or $\F_{r+2}$ since $\E$ was $3$-mixing for $H$. If $|\{i,j,k,l\}| = 4$, then the two edges are separated by a permutation in both $\F_{r+1}$ and $\F_{r+2}$ since $\E$ was pairwise suitable for $H$. Details follow.

\setcounter{case}{0}
\begin{case}[both $\{a,b\}$ and $\{c,d\}$ are crossing edges]

If $i,j,k$ and $l$ are distinct then from the definition of $\E$ there exists a permutation $\sigma \in \E$ such that $\{h_i,h_j\} \prec_{\sigma} \{h_k, h_l\}$ or $\{h_k, h_l\} \prec_{\sigma} \{h_i,h_j\}$. Without loss of generality, assume $\{h_i,h_j\} \prec_{\sigma} \{h_k, h_l\}$. Therefore, in the permutations $\tau_{\sigma}$ and $\kappa_{\sigma}$ constructed from $\sigma$, we have $\{a,b\} \prec_{\tau_{\sigma}} \{c,d\}$ and $\{a,b\} \prec_{\kappa_{\sigma}} \{c,d\}$. 

Recall that $\E$ is a pairwise suitable and $3$-mixing family of permutations for $H$. If $i=k$ and $i,j,l$ are distinct, then there exists a permutation $\sigma \in \E$ such that $h_j \prec_{\sigma} h_i \prec_{\sigma} h_l$ or $h_l \prec_{\sigma} h_i \prec_{\sigma} h_j$. Without loss of generality, assume  $h_j \prec_{\sigma} h_i \prec_{\sigma} h_l$. Now it is easy to see that either $\{a,b\} \prec_{\tau_{\sigma}} \{c,d\}$ or $\{a,b\} \prec_{\kappa_{\sigma}} \{c,d\}$. The cases when $i=l, j, k$ are distinct or $i,j=k,l$ are distinct or $i,j=l,k$ are distinct are symmetric to the above case where  $i=k,j,l$ are distinct. 

Consider the case when $i=k,j=l$ are distinct. In this case, both $\{a,b\}$ and $\{c,d\}$ have their endpoints in $V_i$ and $V_j$. Then there exists some $p \in [r]$ such that $\{a,b\}, \{c,d\} \in E(G_p)$. Since $\F_p$ is a pairwise suitable family of permutations for $G_p$ there exists a $\sigma \in F_p$ such that $\{a,b\} \prec_{\sigma} \{c,d\}$ or  $\{c,d\} \prec_{\sigma} \{a,b\}$. The case when $i=l$ and $j=k$ are distinct is similar. 
\end{case}

\begin{case}[only $\{a,b\}$ is a crossing edge] 

Let $a \in V_i, b \in V_j$ and $c,d \in V_k$. If $i,j,k$ are distinct then there exists a permutation $\sigma$ in $\E$ such that either $h_i \prec_{\sigma} h_j \prec_{\sigma} h_k$ or $h_k \prec_{\sigma} h_j \prec_{\sigma} h_i$. Without loss of generality, assume $h_i \prec_{\sigma} h_j \prec_{\sigma} h_k$. Now its easy to see that both $\{a,b\} \prec_{\tau_{\sigma}} \{c,d\}$ and $\{a,b\} \prec_{\kappa_{\sigma}} \{c,d\}$. If $i=k, j$ are distinct then both $\{a,b\}$ and $\{c,d\}$ have their endpoints from $V_i \cup V_j$. Then there exists some $p \in [r]$ such that $\{a,b\}, \{c,d\} \in E(G_p)$. Since $\F_p$ is a pairwise suitable family of permutations for $G_p$ there exists a $\sigma \in \F_p$ such that $\{a,b\} \prec_{\sigma} \{c,d\}$ or  $\{c,d\} \prec_{\sigma} \{a,b\}$. The case when $j=k, i$ are distinct is similar.   
\end{case}

\begin{case}[only $\{c,d\}$ is a crossing edge] 
Similar to the case above. 
\end{case}

\begin{case}[both $\{a,b\}$ and $\{c,d\}$ are non-crossing edges]
Then, for each $p \in [r]$,  $\{a,b\}, \{c,d\} \in E(G_p)$. Since $\F_p$ is a pairwise suitable family of permutations for $G_p$ there exists a $\sigma \in \F_p$ such that $\{a,b\} \prec_{\sigma} \{c,d\}$ or  $\{c,d\} \prec_{\sigma} \{a,b\}$. 
\end{case}

Thus, we prove Claim \ref{claimMaxPairsParts}. Hence, we have $\boxli(G) \leq |\mathcal{F}| = \sum_{i=1}^{r}|F_i| + |F_{r+1}| + |F_{r+2}| \leq \hat{\boxli}(P_G) r + 13.68 \log r$.
\end{proof}


\subsection{Maximum degree}

Adiga, Bhowmick, and Chandran have shown that the boxicity of a graph $G$ of maximum degree $\Delta$ is in  $O(\Delta \log^2 \Delta)$ \cite{DiptAdiga}. For any hypergraph $H$ of rank $r$ and maximum degree $D$ the maximum degree of $L(H)$ is $r(D-1)$. Hence the next bound follows immediately form Theorem \ref{theoremConnectionBoxliPermutation}.

\begin{corollary}
\label{corollaryHypergraphMaxDegree}
For any hypergraph $H$ of rank $r$ and maximum degree $D$,
$$ \boxli(H) \in \order{rD \log^2(rD)}.
$$
\end{corollary}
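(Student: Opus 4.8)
The plan is to chain together three facts that are already available: the identity $\boxli(H)=\boxi(L(H))$ from Theorem \ref{theoremConnectionBoxliPermutation}, an elementary bound on the maximum degree of the line graph $L(H)$, and the external result that $\boxi(G)\in\order{\Delta\log^2\Delta}$ for any graph $G$ of maximum degree $\Delta$ \cite{DiptAdiga}. The whole argument is a reduction: once the degree of $L(H)$ is controlled, the boxicity bound can be applied as a black box and translated back into a statement about separation dimension.

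First I would verify the degree bound $\Delta(L(H))\le r(D-1)$. A vertex of $L(H)$ is an edge $e\in E(H)$, and since $H$ has rank $r$ we have $|e|\le r$. The neighbours of $e$ in $L(H)$ are precisely those edges of $H$ sharing at least one vertex with $e$; for each vertex $v\in e$ there are at most $\deg_H(v)-1\le D-1$ edges other than $e$ that contain $v$. Summing over the at most $r$ vertices of $e$ yields at most $r(D-1)$ neighbours of $e$, so indeed $\Delta(L(H))\le r(D-1)$.

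Then I would substitute. By Theorem \ref{theoremConnectionBoxliPermutation} we have $\boxli(H)=\boxi(L(H))$, and applying the Adiga--Bhowmick--Chandran bound to the graph $L(H)$ gives $\boxi(L(H))\in\order{\Delta(L(H))\log^2\Delta(L(H))}$. Using $\Delta(L(H))\le r(D-1)\le rD$ together with the fact that $x\mapsto x\log^2 x$ is increasing on the relevant range, the right-hand side is $\order{rD\log^2(rD)}$, which is the claimed bound.

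There is no genuine obstacle here: the substance is carried entirely by the equivalence theorem and the cited boxicity estimate, both of which may be invoked directly, and the sole computation is the degree count above. The one point worth stating carefully is the monotonicity of $x\log^2 x$, which is what lets me replace the sharper-looking bound at $\Delta=r(D-1)$ by the cleaner $\order{rD\log^2(rD)}$ and thereby absorb the $-1$ and the constant factors into the asymptotic notation.
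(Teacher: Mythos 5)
Your proof is correct and follows essentially the same route as the paper: the paper likewise combines Theorem \ref{theoremConnectionBoxliPermutation}, the observation that $\Delta(L(H)) \leq r(D-1)$, and the Adiga--Bhowmick--Chandran bound $\boxi(G) \in \order{\Delta \log^2 \Delta}$, treating the latter as a black box. Your write-up simply makes explicit the degree count and the monotonicity step that the paper leaves implicit.
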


It is known that there exist graphs of maximum degree $\Delta$ whose boxicity can be as high as $c \Delta \log \Delta$ \cite{DiptAdiga}, where $c$ is a small enough positive constant. Let $G$ be one such graph. Consider the following hypergraph $H$ constructed from $G$. Let $V(H) = E(G)$ and $E(H) = \{E_v : v \in V(G)\}$ where $E_v$ is the set of edges incident on the vertex $v$ in $G$. It is clear that $G = L(H)$. Hence $\boxli(H) = \boxi(G) \geq c \Delta(G) \log \Delta(G)$. Note that the rank of $H$ is $r = \Delta(G)$ and the maximum degree of $H$ is $2$. Thus $\boxli(H) \geq c r \log(r)$ and hence the dependence on $r$ in the upper bound in Corollary \ref{corollaryHypergraphMaxDegree} cannot be considerably brought down in general.

We improve the above upper bound in the case of graphs using the auxiliary lemma from the previous section. For a graph $G$ with maximum degree $\Delta$, it is easy to see that from Lemma \ref{lemmaMaxPairsParts}, $\boxli(G) \in \order{\Delta^2}$. Consider $P_G$ to be the partition of $V(G)$ corresponding to the colour classes in a distance-two colouring of $G$, i.e, a vertex colouring of $G$ in which no two vertices of $G$ which are at a distance at most $2$ from each other are given the same colour. Then the subgraphs induced by any pair of colour classes is a collection of disjoint edges and hence $\hat{\pi}(P_G) \leq 1$. It is easy to see that a distance-two colouring can be done using $\Delta^2 + 1$ colours and hence the bound. Corollary \ref{corollaryHypergraphMaxDegree} improves it to $\order{\Delta \log^2 \Delta}$. It was shown in \cite{RogSunSiv} using $3$-suitable family of permutations that $\boxli(G) \in \order{\Delta \log\log \Delta}$. Here we improve the above bound and show that $\boxli(G) \leq 2^{9 \ilog \Delta} \Delta$ (Theorem \ref{theoremBoxliDelta}). The idea employed is to recursively partition $V(G)$ into $\order{\Delta / \log \Delta}$ parts such that the subgraphs induced by any pair of parts have a maximum degree at most $\log \Delta$ and then apply Lemma \ref{lemmaMaxPairsParts}. Existence of such a partition is guaranteed by Lemma \ref{lemmaLogDegreePartition} below which in turn is proved by an application of the powerful Lov\'{a}sz local lemma.

\begin{lemma}[Lov\'{a}sz local lemma, Erd\H{o}s and Lov\'{a}sz \cite{lovaszlocallemma}] 
\label{lemmaLovaszLocal}
Let $G$ be a graph on vertex set $[n]$ with maximum degree $d$ and let $A_1, \ldots , A_n$ be events defined on some probability space such that for each $i$, 
$$Pr[A_i] \leq \frac{1}{4d}.$$ Suppose further that each $A_i$ is jointly independent of the events $A_j$ for which $\{i,j\} \notin E(G)$. Then $Pr[\overline{A_1} \cap \cdots \cap \overline{A_n}] > 0$. 
\end{lemma}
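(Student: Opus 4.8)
The plan is to prove this symmetric form of the local lemma directly by the standard inductive argument, using constant weights. First I would set $x_i = \frac{1}{2d}$ for every $i \in [n]$ (assuming $d \geq 1$; if $d = 0$ the events are mutually independent and the claim is immediate since each $Pr[A_i] < 1$). The first step is to verify that these weights satisfy the usual generalised local lemma hypothesis, namely $Pr[A_i] \leq x_i \prod_{j \in N(i)} (1 - x_j)$, where $N(i) = \{j : \{i,j\} \in E(G)\}$. Since $|N(i)| \leq d$, this reduces to checking $\frac{1}{4d} \leq \frac{1}{2d}\left(1 - \frac{1}{2d}\right)^d$, i.e.\ $\left(1 - \frac{1}{2d}\right)^d \geq \frac{1}{2}$, which is a routine calculus fact (the left side is increasing in $d$ with minimum value $\frac{1}{2}$ at $d=1$ and limit $e^{-1/2}$).

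The heart of the proof is the following claim, established by induction on $|S|$: for every $i \in [n]$ and every $S \subseteq [n] \setminus \{i\}$ with $Pr[\bigcap_{j \in S}\overline{A_j}] > 0$, one has $Pr[A_i \mid \bigcap_{j \in S}\overline{A_j}] \leq x_i$. The base case $S = \emptyset$ is exactly the hypothesis $Pr[A_i] \leq \frac{1}{4d} \leq x_i$. For the inductive step I would split $S$ into $S_1 = S \cap N(i)$ (the neighbours of $i$ appearing in $S$) and $S_2 = S \setminus S_1$ (the non-neighbours), and expand the conditional probability as the ratio
\[
Pr\!\left[A_i \mid \bigcap_{j \in S}\overline{A_j}\right] = \frac{Pr\!\left[A_i \cap \bigcap_{j \in S_1}\overline{A_j} \mid \bigcap_{l \in S_2}\overline{A_l}\right]}{Pr\!\left[\bigcap_{j \in S_1}\overline{A_j} \mid \bigcap_{l \in S_2}\overline{A_l}\right]}.
\]
For the numerator I would drop the $\bigcap_{j \in S_1}\overline{A_j}$ term and then use that $A_i$ is jointly independent of the events indexed by $S_2$, giving the upper bound $Pr[A_i] \leq x_i \prod_{j \in N(i)}(1-x_j)$. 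For the denominator I would write the conjunction over $S_1$ as a telescoping product of conditional probabilities and apply the induction hypothesis to each factor (every conditioning set being a proper subset of $S$), obtaining the lower bound $\prod_{j \in S_1}(1-x_j) \geq \prod_{j \in N(i)}(1-x_j)$, since $S_1 \subseteq N(i)$ and each factor lies in $(0,1]$. Dividing, the product terms cancel and leave exactly $x_i$, completing the induction.

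Finally I would chain the claim together using the telescoping identity $Pr[\bigcap_{i=1}^n \overline{A_i}] = \prod_{i=1}^n Pr[\overline{A_i} \mid \bigcap_{j<i}\overline{A_j}] = \prod_{i=1}^n \left(1 - Pr[A_i \mid \bigcap_{j<i}\overline{A_j}]\right) \geq \prod_{i=1}^n (1 - x_i) > 0$, where the inequality applies the claim with $S = \{1, \ldots, i-1\}$ and the strict positivity comes from $x_i = \frac{1}{2d} < 1$.

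I expect the main obstacle to be the bookkeeping in the inductive step. One must argue that all the conditioning events have positive probability so that the ratio is well defined; this follows because the telescoping lower bound keeps every partial conjunction positive. One must also invoke the \emph{joint} independence of $A_i$ from the entire sub-collection $\{A_l : l \in S_2\}$, rather than merely pairwise independence, and this is precisely why the hypothesis is stated as joint independence from the non-neighbours. Everything else is routine manipulation of conditional probabilities.
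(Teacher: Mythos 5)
Your proof is correct, but there is no proof in the paper to compare it against: the paper states this lemma as a quoted result of Erd\H{o}s and Lov\'{a}sz, with a citation, and uses it purely as a black box in the proof of the degree-splitting partition result (Lemma \ref{lemmaLogDegreePartition}). What you have written is the standard inductive proof of the symmetric local lemma, and it goes through. The weight check is right: with $x_i = 1/(2d)$ one needs $\left(1 - \frac{1}{2d}\right)^d \geq \frac{1}{2}$, which holds for all $d \geq 1$ since the left side is increasing in $d$ with value exactly $\frac{1}{2}$ at $d = 1$. The split of the conditioning set $S$ into $S_1 = S \cap N(i)$ and $S_2 = S \setminus S_1$, the bound on the numerator via joint independence of $A_i$ from $\{A_l : l \in S_2\}$, and the telescoped lower bound $\prod_{j \in S_1}(1 - x_j) \geq \prod_{j \in N(i)}(1 - x_j)$ on the denominator are exactly the right moves, and you correctly identify the two places where care is required: every conditioning event must have positive probability (which follows because each partial conjunction contains $\bigcap_{j \in S}\overline{A_j}$, or in the final chaining is bounded below by the telescoped product), and the independence used must be joint, not pairwise. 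The only blemish is the $d = 0$ aside: there the hypothesis $Pr[A_i] \leq \frac{1}{4d}$ is not even well-defined, and mutual independence alone does not give $Pr[A_i] < 1$, so the conclusion can actually fail (take some $A_i$ of probability $1$); this is a degeneracy of the statement itself rather than of your argument, and is irrelevant to the paper's application, where $d$ is of order $\Delta^3/\log\Delta$.
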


The following lemma is similar to Lemma $4.2$ in \cite{FurediKahn} and shall be used in proving an upper bound for $\boxli(G)$ in terms of the maximum degree of $G$. 

\begin{lemma}
\label{lemmaLogDegreePartition}
For a graph $G$ with maximum degree $\Delta \geq 2^{64}$, there exists a partitioning of $V(G)$  into $\ceil{ 400 \Delta / \log \Delta }$ parts such that for every vertex $v \in V(G)$ and for every part $V_i, \, i \in \big[ \ceil{ 400 \Delta / \log \Delta } \big],\, \lvert N_G(v) \cap V_i \rvert \leq \frac{1}{2} \log \Delta$.   
\end{lemma}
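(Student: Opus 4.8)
The plan is to use the probabilistic method through the Lov\'{a}sz local lemma (Lemma \ref{lemmaLovaszLocal}). Write $k = \ceil{400\Delta/\log\Delta}$ for the target number of parts, and assign each vertex of $G$ independently and uniformly at random to one of the $k$ parts $V_1, \ldots, V_k$. For each vertex $v \in V(G)$ and each index $i \in [k]$, let $A_{v,i}$ be the bad event that $\lvert N_G(v) \cap V_i \rvert > \frac12 \log\Delta$. If I can show that with positive probability none of the events $A_{v,i}$ occurs, then some outcome of this random assignment is a partition with the required property.

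First I would bound $Pr[A_{v,i}]$. Since the parts assigned to the (at most $\Delta$) neighbours of $v$ are chosen independently, $\lvert N_G(v) \cap V_i \rvert$ is a binomial random variable with mean $\mu = \lvert N_G(v) \rvert / k \le \Delta/k \le \tfrac{1}{400}\log\Delta$; this last inequality is precisely why the constant $400$ and the factor $\Delta/\log\Delta$ appear in the definition of $k$. A Chernoff/Poisson-tail estimate of the form $Pr[X \ge t] \le (e\mu/t)^t$, applied with $t = \frac12\log\Delta$ and the bound $\mu \le \tfrac{1}{400}\log\Delta$ (using that $(e\mu/t)^t$ is monotone increasing in $\mu$, so the upper bound on $\mu$ may be substituted), gives
$$Pr[A_{v,i}] \le \left(\frac{e\mu}{t}\right)^{t} \le \left(\frac{e}{200}\right)^{\frac12\log\Delta} = \Delta^{\frac12\log(e/200)} \le \Delta^{-3.1}.$$

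Next I would pin down the dependency structure required by the local lemma. The event $A_{v,i}$ is determined solely by the random choices assigned to the vertices of $N_G(v)$, so $A_{v,i}$ is jointly independent of every event $A_{w,j}$ for which $N_G(w) \cap N_G(v) = \emptyset$. Two vertices $v,w$ share a common neighbour only when $w \in \bigcup_{u \in N_G(v)} N_G(u)$, a set of at most $\Delta^2$ vertices (counting $v$ itself); allowing all $k$ choices of the second index, the maximum degree of the dependency graph on the events is at most $d \le \Delta^2 k$. For $\Delta \ge 2^{64}$ one has $\log\Delta \ge 64$, whence $k \le 7\Delta$ and $4d \le 28\Delta^3$. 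It then suffices to verify the hypothesis $Pr[A_{v,i}] \le 1/(4d)$ of Lemma \ref{lemmaLovaszLocal}: combining the last two steps, it is enough that $\Delta^{-3.1} \le 1/(28\Delta^3)$, i.e. that $\Delta^{0.1} \ge 28$, which holds since $\Delta^{0.1} \ge 2^{6.4} > 28$ when $\Delta \ge 2^{64}$. The local lemma then gives $Pr[\bigcap_{v,i} \overline{A_{v,i}}] > 0$, producing the desired partition.

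The main obstacle is entirely quantitative rather than structural: everything hinges on the concentration bound beating $1/(4d)$, and the somewhat unusual-looking constants (the factor $400$, the threshold $\frac12\log\Delta$, and the hypothesis $\Delta \ge 2^{64}$) are chosen exactly so that the exponent $-3.1$ from the tail dominates the $\Delta^3$ growth of the dependency degree. I would be careful to invoke a genuine exponential tail bound here, since the naive first-moment (Markov) estimate is far too weak, and to justify the monotonicity of $(e\mu/t)^t$ in $\mu$ so that the upper bound $\mu \le \tfrac{1}{400}\log\Delta$ can legitimately be substituted in place of the exact mean.
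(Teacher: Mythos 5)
Your proof is correct and takes essentially the same approach as the paper's: assign vertices to the $\ceil{400\Delta/\log\Delta}$ parts independently and uniformly at random, bound the probability of each bad event by $\Delta^{-3.1}$ via a multiplicative Chernoff-type estimate, and apply the Lov\'asz local lemma to a dependency graph whose degree is at most $\Delta^{2}$ times the number of parts (the paper bounds it by $(1+\Delta^{2})r$). The only notable difference is bookkeeping: the paper first passes to a $\Delta$-regular supergraph so that the mean is exactly $\Delta/r$ (and defers the issue of replacing the mean by an upper bound to an appendix lemma), whereas your monotone tail bound $\Pr[X\geq t]\leq \left(e\mu/t\right)^{t}$ handles non-regular $G$ directly.
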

\begin{proof}
Since we can have a $\Delta$-regular supergraph (with possibly more vertices) of $G$ we can as well assume that $G$ is $\Delta$-regular. Let $r = \ceil{ \frac{400 \Delta}{\log \Delta} } \leq \frac{401\Delta}{\log \Delta}$. Partition $V(G)$ into $V_1 , \ldots , V_r$ using the following procedure: for each $v \in V(G)$, independently assign $v$ to a set $V_i$ uniformly at random from $V_1, \ldots , V_r$. 

We use the following well known multiplicative form of Chernoff Bound (Theorem 4.4 in \cite{mitzenmacher}). Let $X$ be a sum of mutually independent indicator random variables with $\mu = E[X]$. Then for any $\delta > 0$,
$$Pr[X \geq (1+\delta) \mu] \leq c_{\delta}^{\mu},$$
where $c_{\delta} = e^{\delta} / (1 + \delta)^{(1 + \delta)}$. 

Let $d_i(v)$ be a random variable that denotes the number of neighbours of $v$ in $V_i$. Then $\mu_{i,v} = E[d_i(v)] = \frac{\Delta}{r} \leq \frac{1}{400} \log \Delta$. For each $v \in V(G), i \in [r]$, let $E_{i,v}$ denote the event $d_i(v) \geq \frac{1}{2}\log \Delta$. Then applying the above Chernoff bound with $\delta = 199$, we have $Pr[E_{i,v}] = Pr[d_i(v) \geq 200 \frac{\log \Delta}{400}] \leq 2^{-3.1 \log \Delta} = \Delta^{-3.1}$. 
Consider the collection of ``bad'' events $E_{i,v}$, $i \in [r], v \in V(G)$. In order to apply Lemma \ref{lemmaLovaszLocal}, we construct a dependency graph $H$ whose vertices are events $E_{i,v}$ and two vertices are adjacent if and only if the corresponding two events are dependent. Since $E_{i,v}$ depends only on where the neighbours of $v$ went to in the random partitioning, it is easy to see that the maximum degree of $H$, denoted by $d_H$, is at most $(1 + \Delta + \Delta(\Delta-1))r = (1+\Delta^2)r \leq \frac{402 \Delta^3}{\log \Delta}$. For each $i \in [r], v \in V(G)$, $Pr[E_{i,v}] \leq \frac{1}{\Delta^{3.1}}  \leq \frac{\log \Delta}{1608 \Delta^3} \leq \frac{1}{4d_H}$. Therefore, by Lemma \ref{lemmaLovaszLocal}, we have $Pr[\bigcap_{i \in [r], v \in V(G)} \overline{E_{i,v}}] > 0$. Hence there exists a partition satisfying our requirements. 
\end{proof}

\begin{theorem}
For a graph $G$ with maximum degree $\Delta$, $\boxli(G) \leq 2^{9 \ilog \Delta} \Delta$.
\label{theoremBoxliDelta} 
\end{theorem}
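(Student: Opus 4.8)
The plan is to prove the bound $\boxli(G) \leq 2^{9 \ilog \Delta} \Delta$ by a recursive partitioning argument, using Lemma \ref{lemmaLogDegreePartition} as the workhorse at each level of the recursion and Lemma \ref{lemmaMaxPairsParts} to assemble the permutations. First I would set up a recursive function, say $g(\Delta)$, bounding $\boxli(G)$ for any graph $G$ of maximum degree at most $\Delta$. The base case handles $\Delta$ below the threshold $2^{64}$ required by Lemma \ref{lemmaLogDegreePartition}; here the maximum degree is a constant, so $\boxli(G)$ is bounded by some absolute constant (for instance via Corollary \ref{corollaryHypergraphMaxDegree}, or simply by noting $\boxli(G) \leq \boxi(L(H))$ and that $L(H)$ has bounded degree).

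For the recursive step, given $G$ with maximum degree $\Delta \geq 2^{64}$, I would invoke Lemma \ref{lemmaLogDegreePartition} to partition $V(G)$ into $r = \ceil{400\Delta/\log\Delta}$ parts $V_1,\ldots,V_r$ so that every vertex has at most $\frac{1}{2}\log\Delta$ neighbours in any single part. The crucial observation is that for any pair $i,j$, the induced subgraph $G[V_i \cup V_j]$ has maximum degree at most $\log\Delta$, since a vertex's neighbours inside $V_i \cup V_j$ number at most $2 \cdot \frac{1}{2}\log\Delta = \log\Delta$. Applying Lemma \ref{lemmaMaxPairsParts} to this partition $P_G$ gives
\begin{equation*}
\boxli(G) \leq 13.68 \log r + \hat{\boxli}(P_G)\, r,
\end{equation*}
and by definition $\hat{\boxli}(P_G) = \max_{i,j}\boxli(G[V_i\cup V_j]) \leq g(\log\Delta)$, since each such induced subgraph has maximum degree at most $\log\Delta$. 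This yields the recurrence $g(\Delta) \leq 13.68\log r + r \cdot g(\log\Delta)$, and substituting $r \leq 401\Delta/\log\Delta$ gives roughly $g(\Delta) \leq \frac{401\Delta}{\log\Delta}\, g(\log\Delta) + O(\log\Delta)$.

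The heart of the argument is then to unfold this recurrence and verify the closed form $g(\Delta) \leq 2^{9\ilog\Delta}\Delta$. The recursion depth is governed by how many times one must apply $\log$ to bring $\Delta$ below the base-case threshold, which is precisely $\ilog\Delta$ up to an additive constant. At each level the argument of $g$ drops from the current value $x$ to $\log x$, while the multiplicative factor $\frac{401 x}{\log x}$ is incurred; telescoping the product of these factors down the recursion tree, the denominators $\log x$ at successive levels partially cancel against the arguments at the next level, and the $400$-type constants contribute a bounded factor per level. I would prove the closed form by induction on $\ilog\Delta$: assuming $g(\log\Delta) \leq 2^{9\ilog(\log\Delta)}\log\Delta = 2^{9(\ilog\Delta - 1)}\log\Delta$, substitute into the recurrence to get $g(\Delta) \lesssim \frac{401\Delta}{\log\Delta}\cdot 2^{9\ilog\Delta - 9}\log\Delta + 13.68\log r = 401 \cdot 2^{-9}\, \Delta \cdot 2^{9\ilog\Delta} + O(\log\Delta)$, and since $401 \cdot 2^{-9} < 1$, the leading term stays within $2^{9\ilog\Delta}\Delta$ with room to absorb the lower-order logarithmic term.

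The main obstacle will be making the inductive bookkeeping on the constants genuinely tight: the chosen exponent $9$ in $2^{9\ilog\Delta}$ must be large enough that $401 \cdot 2^{-9}$ leaves slack to swallow both the additive $13.68\log r$ term and the constant-factor blow-up from the base case, at every level of the recursion simultaneously. I would therefore be careful to verify that $2^9 = 512 > 401$ gives the required contraction $401/512 < 1$ at each step, and to confirm that the base case constant is small enough to be absorbed into the factor $2^9$ after accounting for the $\ilog\Delta$ levels of recursion. The cleanest way to manage this is to fix an explicit induction hypothesis of the form $g(x) \leq C \cdot 2^{9\ilog x} x$ for a suitable absolute constant $C \geq 1$ large enough to cover the base case, and then check that the contraction factor keeps $C$ from growing as the recursion proceeds.
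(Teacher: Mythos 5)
Your recursive machinery is exactly the paper's: partition via Lemma \ref{lemmaLogDegreePartition}, assemble via Lemma \ref{lemmaMaxPairsParts}, obtain the recurrence $g(\Delta) \leq 13.68 \log r + r\, g(\log\Delta)$ with $r = \ceil{400\Delta/\log\Delta}$, and close the induction using $401 \cdot 2^{-9} < 1$ and $\ilog(\log\Delta) = \ilog\Delta - 1$; that part of your argument is sound. The genuine gap is in the base case, and it matters because the theorem is an explicit, constant-free inequality. For $x < 2^{64}$ you bound $g(x)$ by ``some absolute constant'' via Corollary \ref{corollaryHypergraphMaxDegree} (or via bounded degree of the line graph); both routes carry an unspecified constant hidden in the $O(\cdot)$ notation, so you cannot verify that the base case satisfies $g(x) \leq 2^{9 \ilog x} x$. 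Your fallback --- an induction hypothesis $g(x) \leq C \cdot 2^{9 \ilog x} x$ with $C \geq 1$ chosen to cover the base case --- is self-consistent, but it proves only $\boxli(G) \leq C \cdot 2^{9 \ilog \Delta}\Delta$, which is strictly weaker than the stated theorem whenever $C > 1$. Nor does $C$ get ``absorbed'' over the $\ilog\Delta$ levels of recursion: each level contracts the constant by a factor of only $401/512$, so for $\Delta$ just above $2^{64}$ (where only one level of recursion is available) a large base-case constant survives essentially intact and the claimed inequality fails there.

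The fix, which is what the paper does, is to use an explicit base-case bound rather than an asymptotic one: Theorem 10 of \cite{RogSunSiv} gives $\boxli(\Delta) \leq (4\Delta - 4)(\ceil{\log\log(2\Delta-2)} + 3) + 1$ for $\Delta > 1$, and one verifies pointwise that this is at most $2^{9 \ilog \Delta}\Delta$ for every $1 < \Delta < 2^{64}$ (for such $\Delta$ the right-hand side is at least $2^9 \Delta$, while the left-hand side is at most about $40\Delta$ in that range), together with $\boxli(1) = 1$. With that verification the induction starts with $C = 1$, and your recursive step then yields the theorem exactly as stated.
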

\begin{proof}
Let $\boxli(\Delta) := \max\{\boxli(H) : H \textnormal{ is a graph with maximum degree at most } \Delta \}$. Then, clearly $\boxli(G) \leq \boxli(\Delta)$. 
If $\Delta \leq 1$, then $G$ is a collection of matching edges and disjoint vertices and therefore $\boxli(1) = 1$.  When $\Delta > 1$, it was shown in Theorem $10$ of \cite{RogSunSiv} that $\boxli(\Delta) \leq (4\Delta - 4)(\ceil{ \log \log (2\Delta - 2) } + 3) + 1$. For every $1 < \Delta < 2^{64}$, it can be verified that $(4\Delta - 4)(\ceil{ \log \log (2\Delta - 2) } + 3) + 1 \leq 2^{9 \ilog \Delta} \Delta$. Therefore, the statement of the theorem is true for every $\Delta < 2^{64}$. 

For $\Delta \geq 2^{64}$, let $P_G$ be a partition of $V(G)$ into $V_1 \uplus \cdots \uplus V_{r}$ where $r=\ceil{ 400 \Delta / \log \Delta }$ and $|N_G(v) \cap V_i| \leq \frac{1}{2} \log \Delta, ~ \forall v \in V(G), i \in [r]$. Existence of such a partition is guaranteed by lemma \ref{lemmaLogDegreePartition}. From Lemma \ref{lemmaMaxPairsParts}, we have $\boxli(G) \leq 13.68 \log r +  \hat{\boxli}(P_G) r$ where $\hat{\boxli}(P_G) = \max_{i,j \in [r]} \boxli(G[V_i \cup V_j])$. Since $|N_G(v) \cap V_i| \leq \frac{1}{2} \log \Delta$ for every $v \in V(G), i \in [r]$, the maximum degree of the graph $G[V_i \cup V_j]$ is at most $\log \Delta$ for every $i, j \in [r]$. Therefore, $\hat{\boxli}(P_G) \leq \boxli(\log \Delta)$. Thus we have 
\begin{eqnarray}
\label{equationRecurrencePi}
\boxli(\Delta) 
	& \leq & \ceil{ \frac{400 \Delta}{\log \Delta} } \boxli(\log \Delta) + 13.68\log \ceil{ \frac{400 \Delta}{\log \Delta}} \nonumber \\
	& \leq & 2^9 \frac{\Delta}{\log \Delta} \boxli(\log \Delta), \, \textnormal{where } \Delta \geq 2^{64}.
\end{eqnarray}

Now we complete the proof by using induction on $\Delta$. The statement is true for all value of $\Delta < 2^{64}$  and we have the recurrence relation of Equation (\ref{equationRecurrencePi}) for larger values of $\Delta$. For an arbitrary $\Delta \geq 2^{64}$, we assume inductively that the bound in the statement of the theorem is true for all smaller values of $\Delta$. Now since $\Delta \geq 2^{64}$, we can apply the recurrence in Equation (\ref{equationRecurrencePi}). Therefore
\begin{eqnarray*}
\label{equationInductionPi}
\boxli(\Delta) 
	& \leq & 2^9 \frac{\Delta}{\log \Delta} \boxli(\log \Delta) \\
	& \leq & 2^9 \frac{\Delta}{\log \Delta} 2^{9 \ilog(\log \Delta)} \log \Delta, \, \textnormal{(by induction)} \\
	& = & 	 2^{9 \ilog \Delta} \Delta. 
\end{eqnarray*}

\end{proof}

We believe that the bound proved above can be improved. Please see the discussion in Section \ref{sectionOpenProblems}.


\subsection{Degeneracy}
\label{sectionDegeneracy}

\begin{definition}
\label{definitionDegeneracy}
For a non-negative integer $k$, a graph $G$ is \emph{$k$-degenerate} if the vertices of $G$ can be enumerated in such a way that every vertex is succeeded by at most $k$ of its neighbours. The least number $k$ such that $G$ is $k$-degenerate is called the \emph{degeneracy} of $G$ and any such enumeration is referred to as a \emph{degeneracy order} of $V(G)$. 
\end{definition}

For example, trees and forests are 1-degenerate and planar graphs are 5-degenerate. Series-parallel graphs, outerplanar graphs, non-regular cubic graphs, circle graphs of girth at least 5 etc. are 2-degenerate. 

For any non-negative integer $n$, a \emph{star} $S_n$ is a rooted tree on $n+1$ nodes with one root and $n$ leaves connected to the root. In other words, a star is a tree with at most one vertex whose degree is not one. A \emph{star forest} is a  disjoint union of stars. 

\begin{definition}
\label{definitionArboricty}
The \emph{arboricity} of a graph $G$, denoted by $\mathcal{A}(G)$, is the minimum number of spanning forests whose union  covers all the edges of $G$. The \emph{star arboricity} of a graph $G$, denoted by $\mathcal{S}(G)$, is the minimum number of spanning star forests whose union covers all the edges of $G$. 
\end{definition}

Clearly, $\mathcal{S}(G) \geq \mathcal{A}(G)$ from definition. Furthermore, since any tree can be covered by two star forests, $\mathcal{S}(G) \leq 2\mathcal{A}(G)$. 

For the sake of completeness, we give a proof for the following already-known lemma. 
\begin{lemma}
\label{lemmaStarArboricityDegeneracy}
For a $k$-degenerate graph $G$, $\mathcal{S}(G) \leq 2k$.  
\end{lemma}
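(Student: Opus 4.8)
The plan is to reduce the claim to the arboricity bound that sits just beneath it. The excerpt has already recorded that $\mathcal{S}(G) \leq 2\mathcal{A}(G)$, because any tree splits into two star forests. Hence it suffices to prove that every $k$-degenerate graph $G$ satisfies $\mathcal{A}(G) \leq k$, i.e.\ that $E(G)$ can be covered by $k$ spanning forests, and then conclude $\mathcal{S}(G) \leq 2\mathcal{A}(G) \leq 2k$.

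First I would fix a degeneracy order $v_1, \ldots, v_n$ of $V(G)$, so that each $v_i$ has at most $k$ of its neighbours among $\{v_{i+1}, \ldots, v_n\}$ (Definition~\ref{definitionDegeneracy}). Orient every edge of $G$ from its earlier endpoint to its later endpoint in this order. This orientation is acyclic, and the defining property of the degeneracy order says precisely that every vertex has out-degree at most $k$.

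Next I would colour the edges with $k$ colours by assigning, at each vertex $v$ independently, pairwise distinct colours from $\{1, \ldots, k\}$ to its (at most $k$) out-edges. In the resulting colour class $F_c$, every vertex has out-degree at most $1$; since the underlying orientation is acyclic, $F_c$ contains no cycle and is therefore a spanning forest. This gives the decomposition of $E(G)$ into $k$ forests, establishing $\mathcal{A}(G) \leq k$. Composing with $\mathcal{S}(G) \leq 2\mathcal{A}(G)$ yields the lemma. If one wishes to make the final step self-contained rather than quote it, root each tree of each forest and classify its edges by the parity of the depth of the lower (child) endpoint; each of the two classes is a star forest, with centres being the vertices of one fixed depth parity and every leaf having degree one.

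The only point requiring care—and the real content of the argument—is the step $\mathcal{A}(G) \leq k$: the reason each colour class is a \emph{forest} rather than merely a pseudoforest is that maximum out-degree one forbids only multiple out-edges, so cycles are excluded precisely because the orientation is acyclic (being induced by a linear order). Everything else is routine bookkeeping.
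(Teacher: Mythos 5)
Your proof is correct and follows essentially the same route as the paper's: orient the edges along a degeneracy order to get an acyclic orientation with out-degree at most $k$, distribute the out-edges of each vertex among $k$ classes to obtain $k$ spanning forests (so $\mathcal{A}(G) \leq k$), and conclude via $\mathcal{S}(G) \leq 2\mathcal{A}(G)$. The paper's version is just terser; your added verification that each colour class is a forest (acyclicity of the orientation ruling out cycles, not just out-degree one) is a point the paper leaves implicit.
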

\begin{proof}
By following the degeneracy order, the edges of $G$ can be oriented acyclically such that each vertex has an out-degree at most $k$. Now the edges of $G$ can be partitioned into $k$ spanning forests by choosing a different forest for each outgoing edge from a vertex. Thus, $\mathcal{A}(G) \leq k$ and $\mathcal{S}(G) \leq 2k$.   
\end{proof}

\begin{theorem}
\label{theoremBoxliDegeneracy}
For a $k$-degenerate graph $G$ on $n$ vertices, $\boxli(G) \in O(k \log \log n)$.  
\end{theorem}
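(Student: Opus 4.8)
The plan is to use the hint in the summary of results: decompose $G$ into a small number of star forests, and then handle each star forest using $3$-suitable permutations applied simultaneously to the stars (treated as ``super-vertices'') and to the leaves within each star. Since $G$ is $k$-degenerate, Lemma \ref{lemmaStarArboricityDegeneracy} guarantees that $\mathcal{S}(G) \leq 2k$, so the edge set of $G$ is covered by at most $2k$ spanning star forests $F_1, \ldots, F_{2k}$. For each such star forest I would build a family of permutations that separates any two disjoint edges \emph{both of which lie in that forest}, and then take the union over all $2k$ forests. Because every edge of $G$ lies in some $F_t$, a pair of disjoint edges $\{a,b\},\{c,d\}$ with $\{a,b\} \in F_s$ and $\{c,d\} \in F_t$ will be separated provided the construction for $F_s$ separates $\{a,b\}$ from \emph{every} other edge of $G$ (not just edges within $F_s$); so the permutations for each forest must order \emph{all} of $V(G)$, and the non-forest vertices can be placed so as not to interfere. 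The total count will then be $2k$ times the number of permutations needed per star forest.

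The core of the argument is therefore: how many permutations separate disjoint edges when the edges come from a \emph{single} star forest on $n$ vertices, and this is where the $\log\log n$ enters. Here I would invoke a $3$-suitable family $\mathcal{S}$ of permutations of $[n]$, which by Spencer's bound \cite{scramble} has size $N(n,3) \leq (1+o(1))\log\log n$. The key observation is that for a star forest, a pair of disjoint edges falls into one of two types: either the two edges lie in distinct stars (with roots $\rho, \rho'$), or they share structure only through leaves. For edges in distinct stars, separating the two \emph{roots} essentially separates the two edges, provided the leaves of each star are packed tightly around their root; for edges incident to a common root, the two edges are $\{\rho, x\}$ and $\{\rho, y\}$ and, being adjacent, need not be separated at all. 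Thus I would use each $\sigma \in \mathcal{S}$ to build a permutation of $V(G)$ that orders the \emph{roots} according to $\sigma$ and then, within each root's block, places that root's leaves, again ordered by a $3$-suitable permutation (so that a leaf can be separated from the remote endpoints of competing edges). A $3$-suitable family has the property that for any three elements and any designated one, some permutation puts the designated one on top, which is exactly what is needed to separate an edge $\{\rho,x\}$ from a disjoint edge $\{\rho',y\}$ when roots collide at only one coordinate. A constant number of applications of the $3$-suitable family at the ``root level'' and the ``leaf level'' will handle all the disjointness patterns, so the per-forest cost is $O(\log\log n)$, giving $O(k\log\log n)$ overall.

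The main obstacle I anticipate is verifying that a \emph{single} $3$-suitable family (or a bounded number of derived families) simultaneously separates all the relevant disjointness patterns within a star forest --- in particular the mixed case where one edge's root coincides with the \emph{leaf} of the other edge, or where both edges hang off roots that the root-level permutation happens to tie. The bookkeeping here is that each vertex plays a dual role (it is a root of its own star and possibly a leaf of another), so the permutation must simultaneously respect a $3$-suitable ordering at the root level and at the leaf level; reconciling these two requirements without blowing up the number of permutations by more than a constant factor is the delicate step. I would handle this by fixing an orientation from the degeneracy order so that each vertex is a leaf in at most one forest, which makes the root/leaf roles consistent within each $F_t$, and then carefully checking that the product construction ``$3$-suitable on roots $\times$ $3$-suitable on leaves'' covers every pair of disjoint edges of $G$ that meet $F_t$. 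Once that case analysis is pinned down, summing $2k \cdot O(\log\log n)$ yields the claimed bound.
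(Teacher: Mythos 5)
Your proposal follows essentially the same route as the paper's proof: decompose $G$ into at most $2k$ spanning star forests via Lemma \ref{lemmaStarArboricityDegeneracy}, then for each forest build $O(\log\log n)$ permutations of all of $V(G)$ in which stars form blocks ordered by a $3$-suitable family (plus reversed block orders), roots sit at one end of their blocks, and leaves within a block are also ordered $3$-suitably, so that each forest's family separates its edges from \emph{every} disjoint edge of $G$. Your one worry about root/leaf role conflicts is a non-issue---within a single spanning star forest the stars are vertex-disjoint, so each vertex has an unambiguous role---and the remaining case analysis you defer is exactly the bookkeeping the paper carries out, so your approach is correct.
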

\begin{proof}
Let $B = \{b_1, \ldots , b_n\}$ and let $r = \floor{ \log \log n + \frac{1}{2}\log \log \log n + \log(\sqrt{2} \pi) + o(1) }$.  From \cite{scramble}, we know that there exists a family $\E =\{\sigma^1, \ldots , \sigma^r\}$ of permutations of $B$ that is $3$-suitable for $B$. Recall that a family $\E$ of permutations of $[n]$ is called $3$-suitable if for every $a, b_1, b_2 \in [n]$ their exists a permutation $\sigma \in \E$ such that $\{b_1, b_2\} \prec_{\sigma} \{a\}$.

By Lemma \ref{lemmaStarArboricityDegeneracy}, we can partition the edges of $G$ into a collection of $2k$ spanning star forests. Let $\mathcal{C} = \{C_1, \ldots , C_{2k} \}$ be one such 
collection. Each star in each star forest has exactly one root vertex which is a highest degree vertex in the star (ties resolved arbitrarily). 

Consider a spanning forest $C_i$, $i \in [2k]$. We construct a family $\F_i = \{\sigma_i^1 , \ldots, \sigma_i^r , \overline{\sigma}_i^1 , \ldots, \overline{\sigma}_i^r\}$ of permutations of $V(G)$ from $C_i$ as follows. In the permutation $\sigma_i^j$, the vertices of the same star of $C_i$ come together as a block, the blocks are ordered according to the permutation $\sigma^j$; within every block the root vertex comes last; and the leaves are ordered according to $\sigma^j$. The permutation $\overline\sigma_i^j$ is similar to $\sigma_i^j$ except that the blocks are ordered in the reverse order. This is formalised in Construction \ref{constructionBoxliDegeneracy}. Let $L_i$ and $l_i$, $i \in [2k]$ be functions from $V(G) \into B$ such that the following two properties hold.

\setcounter{property}{0}
\begin{property}
\label{property1Degeneracy}
$L_i(u) = L_i(v)$ if and only if $u$ and $v$ belong to the same star in $C_i$ 
\end{property}
\begin{property}
\label{property2Degeneracy}
If $u$ and $v$ belong to the same star in $C_i$, then $l_i(u) \neq l_i(v)$. 
\end{property}

It is straight forward to construct such functions.

\begin{construction}(Constructing $\sigma_i^j$ and $\overline{\sigma}_i^j$).
\label{constructionBoxliDegeneracy}
\begin{algorithmic}
\vspace{1ex}
\STATE{For any distinct $u,v \in V(G)$, }
\IF{$L_i(u) \neq L_i(v)$} 
\STATE{/*$u$ and $v$ belong to different stars in $C_i$ */}
\STATE{$u \prec_{\sigma_i^j} v \iff L_i(u) \prec_{\sigma^j} L_i(v)$}
\STATE{$u \prec_{\overline{\sigma}_i^j} v \iff L_i(v) \prec_{\sigma^j}  L_i(u)$}
\ELSE 
\STATE{/*$u$ and $v$ belong to the same star in $C_i$ */}
\IF{$u$ is the root vertex of its star in $C_i$}
\STATE{$v \prec_{\sigma_i^j} u$}
\STATE{$v \prec_{\overline{\sigma}_i^j} u$}
\ELSIF{$v$ is the root vertex of its star in $C_i$}
\STATE{$u \prec_{\sigma_i^j} v$}
\STATE{$u \prec_{\overline{\sigma}_i^j} v$}
\ELSE
\STATE{$u \prec_{\sigma_i^j} v \iff l_i(u) \prec_{\sigma^j} l_i(v)$}
\STATE{$u \prec_{\overline{\sigma}_i^j} v \iff l_i(u) \prec_{\sigma^j} l_i(v)$}
\ENDIF
\ENDIF
\end{algorithmic}
\end{construction} 

\begin{claim}
\label{claimBoxliDegeneracy}
$\mathcal{F} =  \bigcup_{i=1}^{2k} \F_i$ is a pairwise-suitable family of permutations for $G$. 
\end{claim}

Let $\{a,b\}, \{c,d\}$ be two disjoint edges in $G$. Let $C_i$ be the star forest which contains the edge $\{a, b \}$. We will show that one of the permutations in $\F_i$ constructed above will separate these two edges. Since the edge $\{a, b\}$ is present in $C_i$ for some $i \in [2k]$, the vertices $a$ and $b$ belong to the same star, say $S$, of $C_i$ with one of them, say $a$, as the root of $S$. If the vertices $c$ and $d$ are not in $S$ then $3$-suitability among the stars (blocks) is sufficient to separate the two edges. If $c$ and $d$ are in $S$, then the $3$-suitability within the leaves of $S$ suffices. If only one of $c$ or $d$ is in $S$, then the $3$-suitability among the leaves is sufficient to realise the separation of the two edges in one of the two corresponding permutations of the blocks. The details follow.

\setcounter{case}{0}

%
%

\begin{case}[$c,d \in V(S)$]
Then by Property \ref{property1Degeneracy}, $L_i(a) = L_i(b) = L_i(c) = L_i(d)$. Since $\E = \{\sigma_1 , \ldots , \sigma_r\}$ is a $3$-suitable family of permutations for $B= \{b_1, \ldots , b_n \}$, there exists a permutation, say $\sigma^j \in \E$, such that $\{l_i(c), l_i(d)\} \prec_{\sigma^j} \{l_i(b)\}$. Then, from Construction \ref{constructionBoxliDegeneracy}, we have $\{c,d\} \prec_{\sigma_i^j} b$. Since $a$ is the root vertex of the star $S$ in $C_i$ we also have $u \prec_{\sigma_i^j} a$, for all $u \in V(S) \setminus \{a\}$. Thus, $\{c,d\} \prec_{\sigma_i^j} \{a,b\}$.
\end{case}

\begin{case}[only $c \in V(S)$] 
Then, by Property \ref{property1Degeneracy}, $L_i(a) = L_i(b) = L_i(c)$ and $L_i(c) \neq L_i(d)$. Moreover, by Property \ref{property2Degeneracy}, $l_i(a)$, $l_i(b)$ and $l_i(c)$ are distinct. Since $\E$ is a $3$-suitable family of permutations for $B$, there exists a $\sigma^j \in \E$ such that $l_i(c) \prec_{\sigma^j} l_i(b)$. Combining this with the fact that $a$ is the root vertex of $S$, using Construction \ref{constructionBoxliDegeneracy}, we get $c \prec_{\sigma_i^j} b \prec_{\sigma_i^j} a$ and $c \prec_{\overline{\sigma}_i^j} b \prec_{\overline{\sigma}_i^j} a$. Recall that $L_i(c) \neq L_i(d)$. If $L_i(d) < L_i(c)$, then we get $d \prec_{\sigma_i^j} c \prec_{\sigma_i^j} b \prec_{\sigma_i^j} a$. Otherwise, we get $d \prec_{\overline{\sigma}_i^j} c \prec_{\overline{\sigma}_i^j} b \prec_{\overline{\sigma}_i^j} a$. 
\end{case}

\begin{case}[only $d \in V(S)$] 
This is similar to the previous subcase. 
\end{case}

\begin{case}[$c,d \notin V(S)$] 
If $c$ and $d$ belong to the same star in $C_i$, say $S'$, then by Property $P_1$,  we have $L_i(a) = L_i(b)$, $L_i(c) = L_i(d)$, and $L_i(a) \neq L_i(c)$. Then for any $j \in [r]$, either $L_i(a)  \prec_{\sigma^j} L_i(c)$ or $L_i(c) \prec_{\sigma^j} L_i(a)$. Therefore, either $\{a,b\} \prec_{\sigma_i^j} \{c,d\}$ or $\{c,d\} \prec_{\sigma_i^j} \{a,b\}$. If $c$ and $d$ belong to different stars in $C_i$, then Property $P_1$ ensures that $L_i(c)$, $L_i(d)$ and $L_i(a)$ are distinct. Since $\E$ is a $3$-suitable family of permutations for $B$, there exists a $\sigma^j \in \E$ such that $\{L_i(c), L_i(d)\} \prec_{\sigma^j} L_i(a)$. This, combined with Construction \ref{constructionBoxliDegeneracy}, implies that $\{c,d\} \prec_{\sigma_i^j} \{a,b\}$. 
\end{case}

Thus, we prove Claim \ref{claimBoxliDegeneracy}. Applying the same, we get $\boxli(G) \leq |\F| = \sum_{i=1}^{2k}|\F_i| = 4kr = 4k\floor{ \log \log n + \frac{1}{2}\log \log \log n + \log(\sqrt{2} \boxli) + o(1)  }$. 
\end{proof}

\subsubsection*{Tightness of Theorem \ref{theoremBoxliDegeneracy}} 

Let $K_n^{1/2}$ denote the graph obtained by subdividing every edge of a complete graph on $n$ vertices. Note that $K_n^{1/2}$ is $2$-degenerate. In Theorem \ref{theoremKnHalf} of {Section \ref{sectionSubdividedClique}, it is shown that $\boxli(K_n^{1/2}) \in \theta(\log \log n)$. Hence the $\log \log n$ factor in  Theorem \ref{theoremBoxliDegeneracy} cannot be brought down in general. 


\subsection{Treewidth}
\label{sectionTreewidth}

\begin{definition}
\label{defintionTreewdith}
A \emph{tree decomposition} of a graph $G$ is a pair $(\{X_i : i \in I\}, T)$, where $I$ is an index set, $\{X_i : i \in I\}$ is a collection of subsets of $V(G)$, and $T$ is a tree on $I$ such that 
\begin{enumerate}[(i)]
\item $\bigcup_{i \in I} X_i = V(G)$,
\item $\forall \{u, v\} \in E(G), \exists i \in I$ such that $u, v \in X_i$, and
\item $\forall i, j, k \in I$: if $j$ is on the path in $T$ from $i$ to $k$, then $X_i \cap X_k \subseteq X_j$.
\end{enumerate}
The {\em width} of a tree decomposition $(\{X_i~:~i \in I\}, T)$ is $\max_{i \in I} |X_i| -1$. The \emph{treewidth} of $G$ is the minimum width over all tree decompositions of $G$ and is denoted by $\tw(G)$.
\end{definition}

\begin{definition}
\label{definitionOrderedTreeDecomposition}
A tree decomposition $(\{X_i\}_{i \in V(T)}, T)$ of a graph $G$, such that $T$ has a designated root, denoted by $root(T)$, and a fixed ordering on the children of every node is called an {\em ordered tree decomposition}. By $preorder(i)$ and $postorder(i)$ we denote, respectively, the first and last time that a node $i \in V(T)$ is visited by a depth first traversal of $T$ starting from $root(T)$. For every node $i \in V(T)$, the distance from $root(T)$ in $T$ is called its {\em level} and denoted by $level(i)$. 
For a vertex $v \in V(G)$, $bag(v)$ denotes the node $i \in V(T)$ at the smallest level such that $v \in X_i$. Finally, $T(v)$ denotes the subtree of $T$ induced by $bag(v)$ and all its descendents.
\end{definition}

It follows from the above definition that for every $u, v \in V(G)$ either $T(u)$ and $T(v)$ are disjoint or one is contained in the other depending on whether one is a descendent of the other or not. Hence the following observation is immediate. We use $T(u) \subseteq T(v)$ to denote that $T(u)$ is contained in $T(v)$. 

\begin{observation}
\label{observationSubtrees}
Let $(\{X_i\}_{i \in V(T)}, T)$ be an ordered tree decomposition of a graph $G$. For every $\{u, v\} \in E(G)$, either $T(u) \subseteq T(v)$ or $T(v) \subseteq T(u)$.
\end{observation}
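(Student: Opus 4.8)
The plan is to derive the dichotomy from the laminar structure of the subtrees $T(\cdot)$ that was noted immediately before the statement: any two subtrees of the form ``a node together with all its descendants'' are either disjoint or nested. Thus it suffices to show that, for an edge $\{u,v\}$, the subtrees $T(u)$ and $T(v)$ are \emph{not} disjoint, after which the claimed nesting follows at once.

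The key preliminary step is to pin down which nodes of $T$ lie in $T(v)$. First I would record the standard fact that, for a fixed vertex $v \in V(G)$, the set $\{i \in V(T) : v \in X_i\}$ induces a connected subtree of $T$: if $v \in X_i$ and $v \in X_k$, then condition (iii) of Definition \ref{defintionTreewdith} forces $v \in X_j$ for every $j$ on the path from $i$ to $k$. I then want to show that every node containing $v$ is a descendant of $bag(v)$, and hence lies in $T(v)$. To see this, let $w$ be any node with $v \in X_w$ and let $m$ be the lowest common ancestor of $bag(v)$ and $w$ in the rooted tree $T$. Since $m$ lies on the path between $bag(v)$ and $w$, and both of these endpoints contain $v$, connectivity (equivalently, condition (iii)) gives $v \in X_m$. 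But $m$ is an ancestor of $bag(v)$, so $level(m) \le level(bag(v))$; as $bag(v)$ has the smallest level among all nodes containing $v$, this forces $m = bag(v)$. Hence $bag(v)$ is an ancestor of $w$, i.e. $w \in T(v)$. In particular, every node whose bag contains $v$ belongs to $T(v)$.

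With this in hand the argument is short. Given an edge $\{u,v\} \in E(G)$, condition (ii) of Definition \ref{defintionTreewdith} supplies a node $i \in V(T)$ with $u, v \in X_i$. By the preliminary step, $i \in T(u)$ and $i \in T(v)$, so $T(u) \cap T(v) \neq \emptyset$. Since $T(u)$ and $T(v)$ are therefore not disjoint, the laminarity dichotomy noted before the observation yields $T(u) \subseteq T(v)$ or $T(v) \subseteq T(u)$, as required.

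I expect the only real work to be the preliminary step identifying $bag(v)$ as the topmost node of the connected subtree of bags containing $v$; once that is established, the edge condition and the laminarity of rooted subtrees finish the proof immediately. The point to be careful about is the direction of the level inequality in the lowest-common-ancestor argument, which is exactly where the minimality in the definition of $bag(v)$ is used.
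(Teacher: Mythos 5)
Your proof is correct and takes essentially the same approach as the paper, which treats the observation as immediate from the laminarity of the rooted subtrees $T(\cdot)$ noted just before the statement. Your write-up simply supplies the details the paper leaves implicit: that every bag containing $v$ lies in $T(v)$ (via condition (iii) and the minimality of $level(bag(v))$), and that condition (ii) gives the endpoints of an edge a common bag, so $T(u)$ and $T(v)$ cannot be disjoint.
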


\begin{definition}
\label{definitionPSplittingPreorder}
Let $\mathcal{T} = (\{X_i\}_{i \in V(T)}, T)$ be an ordered tree decomposition of a graph $G$ and let $P = (V_1, V_2)$ be a bipartition of V(G), i.e., $V_1 \uplus V_2 = V(G)$. We define a function $f: V(G) \into \N$ as follows. 
\[
f(v) = 
	\begin{cases}
	\begin{array}{ll}
	preorder(bag(v)), & \textnormal{if } v \in V_1 \\
	postorder(bag(v)), & \textnormal{if } v \in V_2
	\end{array}
	\end{cases}
\] 
A permutation $\sigma$ of $V(G)$ is called {\em $P$-splitting} if $f(u) < f(v) \implies u \prec_{\sigma} v$.
\end{definition}

\begin{theorem}
\label{theoremBoxliTreewidth}
Let $G$ be a graph of treewidth $t$. Then $\boxli(G) \leq 15.68 \ceil{\log(t+1)} + 2$.
\end{theorem}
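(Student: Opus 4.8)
The plan is to combine a family of permutations that sorts the vertices according to the colour classes of a chordal supergraph with a small number of permutations that are sensitive to the position of each vertex in a depth-first traversal of a tree decomposition. First I would fix a minimal chordal supergraph $G'$ of $G$; since $\tw(G)=t$ we have $\omega(G')=t+1$, and as chordal graphs are perfectly colourable there is a proper colouring $col\colon V(G)\to[t+1]$ assigning distinct colours to the vertices of every bag of an ordered tree decomposition $\mathcal{T}=(\{X_i\}_{i\in V(T)},T)$ of width $t$ (the bags being cliques of $G'$). For each vertex $v$ write $I(v)=[preorder(bag(v)),postorder(bag(v))]$ for the interval that the subtree $T(v)$ occupies in the Euler tour of $T$; by Observation \ref{observationSubtrees} the intervals of the two endpoints of any edge are nested, and the standard depth-first property makes any two such intervals either nested or disjoint.

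The family $\F$ will have three ingredients. (i) Viewing the $t+1$ colours as the vertices of a complete graph $K_{t+1}$, take a pairwise suitable and $3$-mixing family $\E$ for $K_{t+1}$ with $|\E|=\boxlistar(K_{t+1})\le 6.84\log(t+1)$ (Theorem \ref{theoremBoxliSize}), and from each $\sigma\in\E$ build two permutations $\tau_\sigma,\kappa_\sigma$ of $V(G)$ exactly as in the proof of Lemma \ref{lemmaMaxPairsParts}: order the vertices by the $\sigma$-order of their colours, breaking ties inside a colour class by a fixed order in $\tau_\sigma$ and by its reverse in $\kappa_\sigma$. This contributes at most $13.68\lceil\log(t+1)\rceil$ permutations. (ii) The two $P$-splitting permutations (Definition \ref{definitionPSplittingPreorder}) for the degenerate bipartitions $(V(G),\emptyset)$ and $(\emptyset,V(G))$, i.e. the orders $\mu_{pre}$ and $\mu_{post}$ by preorder and by postorder of $bag(v)$. (iii) For each bit position $i$ of the binary encoding of the $t+1$ colours, the $P_i$-splitting permutation $\rho_i$ and its role-swapped companion $\rho_i'$, where $P_i$ bipartitions $V(G)$ according to the $i$-th bit of $col(v)$. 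Ingredients (ii) and (iii) contribute $2+2\lceil\log(t+1)\rceil$ permutations, for the claimed total of $15.68\lceil\log(t+1)\rceil+2$.

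The verification splits on how many distinct colours appear among two disjoint edges $e=\{a,b\}$ and $f=\{c,d\}$, each of which spans two colour classes since its endpoints are adjacent. If the colour sets $\{col(a),col(b)\}$ and $\{col(c),col(d)\}$ are disjoint (four colours), pairwise suitability of $\E$ separates them through $\tau_\sigma$ or $\kappa_\sigma$; if they share exactly one colour (three colours), $3$-mixing of $\E$ together with the within-class reversal distinguishing $\tau_\sigma$ from $\kappa_\sigma$ does the job. Both arguments are verbatim the corresponding cases of Lemma \ref{lemmaMaxPairsParts}. The whole difficulty is therefore the remaining case, in which $e$ and $f$ span the \emph{same} pair of colours $\{\alpha,\beta\}$, so that $a,c\in V_\alpha$ and $b,d\in V_\beta$; here the colour permutations are useless, and ingredients (ii) and (iii) must separate all such pairs simultaneously across every colour pair using only a logarithmic budget.

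For that case I would fix a bit $i$ in which $\alpha$ and $\beta$ differ and show that at least one of $\mu_{pre},\mu_{post},\rho_i,\rho_i'$ separates $e$ and $f$, by a case analysis on the nesting of the four intervals $I(a),I(b),I(c),I(d)$. The engine of the argument is the structural consequence of the colouring: two vertices of the same colour are non-adjacent, hence the sets of bags containing them are disjoint, and for an edge whose higher endpoint is $u$ the entire tree-path from $bag(u)$ down to the lower endpoint lies among the bags containing $u$. Combining these two facts rules out every ``linked chain'' configuration in which no DFS order would separate the edges: for example it forbids $bag(c)$ from lying strictly between $bag(a)$ and $bag(b)$ (else $a$ and the same-coloured $c$ would share a bag), and it forces $I(b)$ and $I(c)$ to be disjoint whenever the higher endpoints of $e$ and $f$ carry different colours. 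I expect this interval case analysis to be the main obstacle. One has to check that in each surviving configuration the inner edge either sits entirely inside a single preorder/postorder gap of the outer edge, so that $\mu_{pre}$ or $\mu_{post}$ separates, or else has its two relevant intervals placed on opposite sides of the split induced by bit $i$, so that $\rho_i$ or $\rho_i'$ separates; the same-colour disjointness is exactly what guarantees that one of these two alternatives always holds.
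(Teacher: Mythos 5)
Your proposal follows essentially the same route as the paper's proof: the same chordal supergraph with a proper $(t+1)$-colouring, the same $13.68\log(t+1)$ colour-based permutations obtained from a pairwise suitable and $3$-mixing family for $K_{t+1}$, the same preorder/postorder permutations, and the same $2\left\lceil\log(t+1)\right\rceil$ bit-encoded $P$-splitting permutations, with the identical case split on the number of colours spanned by the two edges. The structural facts you isolate for the two-colour case (same-coloured vertices never share a bag, and adjacency forces the higher endpoint to occupy every bag on the path down to the lower endpoint, so the relevant subtrees end up nested or disjoint exactly as required) are precisely the ones the paper's case analysis runs on, so your sketch completes to the paper's argument.
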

\begin{proof}
Let $\mathcal{T} = (\{X_i\}_{i \in V(T)}, T)$ be an ordered tree decomposition of $G$ of width $t$. Let $G'$ be a supergraph of $G$ obtained by adding an edge between every pair of vertices that appear together in some bag $X_i, i \in V(T)$. Hence the treewidth of $G'$ is also $t$ and so its chromatic number is $t+1$. Let $c : V(G') \into [t+1]$ be a proper colouring of $G'$. In the proof to follow, we shall prove the theorem for $G'$. Since $G'$ is a supergraph of $G$, by Observation \ref{observationMonotonicity}, the theorem follows. 

Let $K_{t+1}$ be a complete graph on $[t+1]$ and let $\E$ be a smallest family of permutations that is pairwise suitable and $3$-mixing for $K_{t+1}$. By Theorem \ref{theoremBoxliSize}, we know that $|\E| \leq 6.84 \log(t+1)$. For $\sigma \in \E$, let $(V(G), \lhd_{\sigma})$ be the partial order in which $u \lhd_{\sigma} v \iff c(u) \prec_{\sigma} c(v)$. Let $\tau(\sigma)$ and $\tau'(\sigma)$ be two linear extensions of $(V(G), \lhd_{\sigma})$ such that for two distinct vertices $u, v \in V(G)$ with $c(u) = c(v)$, we have $u \prec_{\tau(\sigma)} v \iff v \prec_{\tau'(\sigma)} u$. Let $\F_1 = \{\tau(\sigma), \tau'(\sigma)\}_{\sigma \in \E}$.

Consider two disjoint edges $\{u_1, u_2\}$ and $\{u_3, u_4\}$ of $G'$. Let $C = \{c(u_i): i \in [4]\}$. If $|C| = 4$, that is if all the four end points have different colours, then consider the permutation $\sigma \in \E$ that separates $\{c(u_1), c(u_2)\}$ from $\{c(u_3), c(u_4)\}$. It is easy to see that $\{u_1, u_2\}$ is separated from $\{u_3, u_4\}$ in both $\tau(\sigma)$ and $\tau'(\sigma)$. If $|C| = 3$, then without loss of generality, we can assume that $c(u_1) = c(u_3)$. Since $\E$ is $3$-mixing for $K_{t+1}$, there exists a permutation $\sigma \in \E$ such that $c(u_1)$ is between $c(u_2)$ and $c(u_4)$ in $\sigma$. Hence $\{u_1, u_2\}$ and $\{u_3, u_4\}$ are separated in exactly one of $\tau(\sigma)$ or $\tau'(\sigma)$.

The case left to be considered is the case when $|C| = 2$. In this case, we construct a different family of permutations. Let $\mathcal{P}$ be a family of bipartitions of $V(G)$ such that for every pair of distinct colours $i, j \in [t+1]$, there exists a partition $(V_1, V_2) \in \mathcal{P}$ with $c^{-1}(i) \subseteq V_1$ and $c^{-1}(j) \subseteq V_2$. It is easy to see that we can have such a family of size $2 \ceil{ \log(t+1)}$ by partitioning $V(G)$ based on the bits of a binary encoding of colours. For a bipartition $P$ of $V(G)$, let $\sigma(P)$ denote the $P$-splitting permutation of $V(G)$ as in Definition \ref{definitionPSplittingPreorder}. In particular, $\sigma_{pre} = \sigma((V(G), \emptyset))$ and $\sigma_{post} = \sigma((\emptyset, V(G)))$. Finally, let $\F_2 = \{\sigma((V_1, V_2)) : (V_1, V_2) \in \mathcal{P} \} \cup \{\sigma_{pre}, \sigma_{post} \}$.

Since $|C| = 2$ we can assume without loss of generality that $c(u_1) = c(u_3) = i$ and $c(u_2) = c(u_4) = j$, $i \neq j$. Let $(V_1, V_2) \in \mathcal{P}$ be the bipartition such that $c^{-1}(i) \subseteq V_1$ and $c^{-1}(j) \subseteq V_2$. Similarly let $(U_1, U_2) \in \mathcal{P}$ be the partition such that $c^{-1}(i) \subseteq U_2$ and $c^{-1}(j) \subseteq U_1$. Let $\sigma_{ij} = \sigma((V_1, V_2))$ and $\sigma_{ji} =  \sigma((U_1, U_2))$. We claim that one of the permutations from $\{\sigma_{ij}, \sigma_{ji}, \sigma_{pre}, \sigma_{post} \}$ will separate $\{u_1, u_2\}$ from $\{u_3, u_4\}$.

Without loss of generality we can assume that $level(bag(u_1)) \leq level(bag(u_i)),~ \forall i \in [4]$. So $T(u_2) \subseteq T(u_1)$ (Observation \ref{observationSubtrees}). If $T(u_3) \cup T(u_4)$ is disjoint from $T(u_1)$ then $\sigma_{pre}$ separates $\{u_1, u_2\}$ from $\{u_3, u_4\}$. So we can assume $T(u_3) \cup T(u_4) \subseteq T(u_1)$. If $preorder(bag(u_2)) < preorder(bag(u_i)),~\forall i \in \{3, 4 \}$, then $\sigma_{pre}$ will separate  $\{u_1, u_2\}$ from $\{u_3, u_4\}$. Similarly if  $postorder(bag(u_2)) > postorder(bag(u_i)),~\forall i \in \{3,4\}$, then $\sigma_{post}$ will separate them. Hence we can further assume that $T(u_2) \subseteq T(u_3) \cup T(u_4)$. Since $u_1$ and $u_2$ are adjacent and $c(u_1) = c(u_3)$, it can be seen that once $T(u_3) \subseteq T(u_1)$ as we have here, we cannot have $T(u_2) \subseteq T(u_3)$. Since $u_3$ and $u_4$ are adjacent, we get $T(u_2), T(u_3) \subsetneq T(u_4) \subseteq T(u_1)$ with $T(u_2) \not\subseteq T(u_3)$. Since $c(u_2) = c(u_4)$, by a similar argument, $T(u_3) \not\subseteq T(u_2)$. Hence we can conclude that $T(u_2) \cap T(u_3) = \emptyset$.  Now if $postorder(bag(u_2)) < preorder(bag(u_3))$, then $\sigma_{ij}$ separates $\{u_1, u_2\}$ from $\{u_3, u_4\}$. Otherwise, $postorder(bag(u_3)) < preorder(bag(u_2))$ and therfore $\sigma_{ji}$ does the required separation.

Hence we conclude that $\F_1 \cup \F_2$ is a pairwise suitable family of permutations for $G'$ and hence $G$. Therefore $\boxli(G) \leq 15.68 \ceil{\log(t+1)} + 2$.
\end{proof}

\subsubsection*{Tightness of Theorem \ref{theoremBoxliTreewidth}} For a complete graph $K_n$, $\tw(K_n) = n-1$. By Corollary \ref{corollaryBoxliOmega} in Section \ref{sectionLowerBound}, we have $\boxli(K_n) \geq \log \floor {n/2}$. Hence, Theorem \ref{theoremBoxliTreewidth} is tight up to a constant factor.


\subsection{Acyclic and star chromatic number}

\begin{definition}
\label{definitionAcyclicStarChromatic}
The \emph{acyclic chromatic number} of a graph $G$, denoted by $\chi_a(G)$, is the minimum number of colours needed to do a proper colouring of  the vertices of $G$ such that the graph induced on the vertices of every pair of colour classes is acyclic. The \emph{star chromatic number} of a graph $G$, denoted by $\chi_s(G)$, is the minimum number of colours needed to do a proper colouring of  the vertices of $G$ such that the graph induced on the vertices of every pair of colour classes is a star forest. 
\end{definition}

Recall (from Section \ref{sectionDegeneracy}) that a star forest is a disjoint union of stars. Clearly, $\chi_s(G) \geq \chi_a(G) \geq \chi(G)$, where $\chi(G)$ denotes the chromatic number of $G$. In order to bound $\boxli(G)$ in terms of $\chi_a(G)$ and $\chi_s(G)$, we first bound $\boxli(G)$ for forests and star forests. Then the required result follows from an application of Lemma \ref{lemmaMaxPairsParts} from Section \ref{sectionBoxliSize}.

\begin{lemma}
\label{lemmaBoxliStars}
For a star forest $G$, $\boxli(G) = 1$.  
\end{lemma}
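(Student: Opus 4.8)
The goal is to show that for a star forest $G$, we have $\boxli(G) = 1$, i.e., a single permutation of $V(G)$ suffices to separate every pair of disjoint edges. The plan is to exhibit one explicit permutation and verify it separates all disjoint edge pairs. First I would recall that $G$ is a disjoint union of stars, and since $\boxli(G) \geq 1$ for any graph with at least two disjoint edges, it suffices to prove $\boxli(G) \leq 1$.

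The key construction is to linearly order the stars (components) of $G$ arbitrarily, and within each star place the leaves first in any order followed by the root last. This yields a single permutation $\sigma$ in which every star occupies a contiguous block, with its root as the final (rightmost) element of that block. Now I would take two disjoint edges $\{a,b\}$ and $\{c,d\}$ and argue by cases according to whether they lie in the same star or in different stars. If they lie in different stars, the two corresponding blocks are disjoint and ordered, so one entire edge precedes the other and $\sigma$ separates them. If both edges lie in the same star $S$, then each edge consists of the root $z$ of $S$ together with a leaf; but then the two edges share the vertex $z$, contradicting disjointness. Hence two disjoint edges can never both lie in a single star, and the different-star case is in fact the only one that arises.

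The main thing to get right is precisely this observation that the disjointness hypothesis forces the two edges into distinct stars: in a star, every edge is incident to the root, so any two edges of the same star intersect at the root. This is the crux, and once stated it makes the separation immediate from the block structure of $\sigma$. I expect no genuine obstacle here; the only care needed is to confirm that $\boxli(G) \geq 1$ (which holds whenever $G$ has a pair of disjoint edges, and is trivially attainable otherwise) so that the equality $\boxli(G) = 1$, rather than merely $\boxli(G) \leq 1$, is justified.
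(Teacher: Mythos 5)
Your proposal is correct and matches the paper's proof: both construct a single permutation in which each star occupies a contiguous block, with separation of disjoint edges following from the fact that any two disjoint edges must lie in different stars (since all edges of a star share its root). The details you supply (placing the root last within its block, and the remark about $\boxli(G) \geq 1$) are exactly the verification the paper leaves implicit as ``easy to verify.''
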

\begin{proof}
Let $S_1, \ldots , S_r$ be the collection of stars that form $G$. Let $\sigma$ be a permutation of $V(G)$ which satisfies $V(S_1) \prec_{\sigma} \cdots \prec_{\sigma} V(S_r)$. It is easy to verify that $\{\sigma\}$ is pairwise suitable for $G$. 
\end{proof}

\begin{lemma}
\label{lemmaBoxliForests}
For a forest $G$, $\boxli(G) \leq 2$.  
\end{lemma}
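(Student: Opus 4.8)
The plan is to show that a forest $G$ admits a pairwise suitable family of just two permutations. Since $\boxli$ is monotone under taking subgraphs (Observation \ref{observationMonotonicity}) and adding isolated edges does not increase $\boxli$, I may assume $G$ is a single tree; handling each tree component and combining via Observation \ref{observationDisjointComponents} reduces the general forest case to this one. The natural idea is to root the tree at an arbitrary vertex and exploit the nested/disjoint structure of the subtrees, exactly as was done in the treewidth proof (a tree has treewidth $1$). In fact the cleanest approach is to mimic the preorder/postorder construction used in Theorem \ref{theoremBoxliTreewidth}: build one permutation $\sigma_1$ from a preorder (depth-first) traversal and a second permutation $\sigma_2$ from the postorder traversal of the rooted tree, and argue that every pair of disjoint edges is separated by one of these two.

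The key structural fact I would use is the analogue of Observation \ref{observationSubtrees}: after rooting $G$ at a vertex, for each vertex $v$ let $T(v)$ denote the subtree hanging below $v$, and note that for any edge $\{u,v\}$ of the tree one of $T(u), T(v)$ strictly contains the other (the child's subtree is contained in the parent's). Consequently, for two disjoint edges $\{a,b\}$ and $\{c,d\}$, writing $a,c$ for the parent endpoints, the four subtrees $T(a) \supsetneq T(b)$ and $T(c) \supsetneq T(d)$ are organised by the laminar family structure of subtrees of a rooted tree: any two of $T(a),T(b),T(c),T(d)$ are either nested or disjoint. I would then case-split on how $T(a)$ and $T(c)$ relate. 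If $T(a)$ and $T(c)$ are disjoint, the two edges lie in disjoint subtrees and a single preorder permutation already places one subtree's vertices entirely before the other's, so $\sigma_1$ separates them. If one contains the other, say $T(c) \subseteq T(a)$, then since the edges are disjoint $c \neq b$, and the crux is to separate using the relative preorder/postorder positions of $b$ versus the subtree $T(c) \cup T(d)$ inside $T(a)$: if $b$'s subtree comes entirely before $T(c)\cup T(d)$ in preorder then $\sigma_1$ works, and if it comes entirely after in postorder then $\sigma_2$ works; the remaining interleaving situations are ruled out precisely because $T(b)$ and $T(c)$, being subtrees of a laminar family that are not nested (a nesting would force an endpoint to coincide or the edges to be adjacent), must be disjoint.

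The step I expect to be the main obstacle is verifying that the two extreme cases (preorder-separates or postorder-separates) actually exhaust all configurations — i.e., ruling out a "straddling" arrangement where $b$ lies between $c$ and $d$ in both orders. This is exactly the kind of argument carried out in the $|C|=2$ analysis of Theorem \ref{theoremBoxliTreewidth}, where one shows $T(u_2) \cap T(u_3) = \emptyset$ from the disjointness of the edges together with the laminarity of subtrees; here the colour constraints are absent but the laminar structure alone suffices because disjointness of the two edges means no endpoint is shared. I would lean on the fact that for a rooted tree, preorder lists a subtree as a contiguous block with the root first, while postorder lists it as a contiguous block with the root last, so reversing the role of the two traversals gives exactly the two-sided separation power needed. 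Once the case analysis closes, $\{\sigma_1,\sigma_2\}$ is pairwise suitable and hence $\boxli(G) \leq 2$, completing the proof.
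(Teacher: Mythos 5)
Your construction is exactly the paper's: root each tree, fix an ordering of children, and take the preorder and postorder traversals as the two permutations (the paper states this construction and leaves the verification to the reader, handling the forest case by concatenating the trees in the same order in both permutations rather than invoking Observation \ref{observationDisjointComponents}; the two reductions are interchangeable).

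However, your exhaustiveness argument contains a false claim. You assert that when $T(c) \subseteq T(a)$, the subtrees $T(b)$ and $T(c)$ cannot be nested because ``a nesting would force an endpoint to coincide or the edges to be adjacent.'' That rules out only one direction of nesting: $T(b) \subseteq T(c)$ is indeed impossible (it would make $a$ a proper descendant of $c$, contradicting $T(c) \subseteq T(a)$), but $T(c) \subsetneq T(b)$ happens routinely. Take the path $a - b - c - d$ rooted at $a$: the edges $\{a,b\}$ and $\{c,d\}$ are disjoint and non-adjacent, yet $T(c) \subsetneq T(b)$. In this configuration $T(b)$ neither lies entirely before $T(c) \cup T(d)$ in preorder nor entirely after it in postorder, so it falls outside the two cases you close and is not ``ruled out.'' The lemma is unharmed: in this case preorder already gives $a \prec_{\sigma_1} b \prec_{\sigma_1} \{c,d\}$, since $b$ is the first vertex of the block of $T(b)$ and $c,d$ lie strictly inside that block. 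So the fix is to add this third case (nesting $T(c) \subsetneq T(b)$, separated by $\sigma_1$) rather than to claim it cannot occur; with that amendment your case analysis is complete and the proof is correct.
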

\begin{proof}
Let $T_1, \ldots , T_r$ be the collection of trees that form $G$.  Convert each tree $T_i$ to an ordered tree by arbitrarily choosing a root vertex for $T_i$ and assigning an arbitrary order to the children of each vertex. Let $\sigma_1, \sigma_2$ be two permutations of $V(G)$ defined as explained below. Consider a vertex $u \in V(T_i)$ and a vertex $v \in V(T_j)$, where $i,j \in [r]$. If $i \neq j$, then $u \prec_{\sigma_1} v \iff i < j$ and $u \prec_{\sigma_2} v \iff i < j$. Otherwise, $u \prec_{\sigma_1} v$ if and only if $u$ precedes $v$ in a preorder traversal of the ordered tree $T_i$ and $u \prec_{\sigma_2} v$ if and only if $u$ precedes $v$ in a postorder traversal of the ordered tree $T_i$. It is left to the reader to verify that $\{\sigma_1, \sigma_2\}$ form  pairwise suitable family of permutations for $G$. 
\end{proof}

\begin{theorem}
\label{theoremBoxliAcyclicStar}
For a graph $G$, $\boxli(G) \leq 2\chi_a(G) + 13.68\log(\chi_a(G))$. Further, if the star chromatic number of $G$ is $\chi_s$, then   $\boxli(G) \leq \chi_s(G) + 13.68\log(\chi_s(G))$. 
\end{theorem}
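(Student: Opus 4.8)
The plan is to apply Lemma~\ref{lemmaMaxPairsParts} to the partition $P_G$ of $V(G)$ given by the colour classes of an optimal acyclic (resp.\ star) colouring, so the only real work is to control $\hat{\boxli}(P_G) = \max_{i,j}\boxli(G[V_i \cup V_j])$. For the acyclic case, fix two colour classes $V_i, V_j$; by the definition of acyclic chromatic number the induced subgraph $G[V_i \cup V_j]$ is acyclic, i.e.\ a forest. By Lemma~\ref{lemmaBoxliForests}, $\boxli$ of any forest is at most $2$, so $\hat{\boxli}(P_G) \leq 2$. Substituting $r = \chi_a(G)$ and $\hat{\boxli}(P_G) \leq 2$ into the conclusion $\boxli(G) \leq 13.68\log r + \hat{\boxli}(P_G)\,r$ of Lemma~\ref{lemmaMaxPairsParts} immediately yields $\boxli(G) \leq 2\chi_a(G) + 13.68\log(\chi_a(G))$.

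For the star chromatic version the argument is structurally identical but uses the sharper bound on the pieces. With $P_G$ now the colour classes of an optimal star colouring, each induced subgraph $G[V_i \cup V_j]$ on a pair of colour classes is by definition a star forest. Invoking Lemma~\ref{lemmaBoxliStars}, which gives $\boxli = 1$ for every star forest, we get $\hat{\boxli}(P_G) \leq 1$. Feeding $r = \chi_s(G)$ and $\hat{\boxli}(P_G) \leq 1$ into Lemma~\ref{lemmaMaxPairsParts} gives $\boxli(G) \leq \chi_s(G) + 13.68\log(\chi_s(G))$, as claimed.

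I would present both bounds together, since they differ only in which of the two auxiliary lemmas (Lemma~\ref{lemmaBoxliForests} versus Lemma~\ref{lemmaBoxliStars}) controls the pairwise term. One small point to state explicitly is that the quantity $\hat{\boxli}(P_G)$ in Lemma~\ref{lemmaMaxPairsParts} is a maximum over \emph{all} pairs $i,j \in [r]$, including the diagonal pairs $i=j$; when $i=j$ the graph $G[V_i]$ is an independent set (the colouring is proper), so $\boxli(G[V_i]) = 0 \leq \hat{\boxli}(P_G)$ and contributes nothing beyond the off-diagonal bound, so the forest/star-forest estimate indeed governs the maximum.

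Honestly, there is no serious obstacle here: the real content has already been front-loaded into Lemma~\ref{lemmaMaxPairsParts} and the two structural lemmas on forests and star forests. The only thing one must be careful about is verifying that the defining property of acyclic (resp.\ star) chromatic number is exactly the statement that every \emph{pair} of colour classes induces a forest (resp.\ star forest)---this is precisely how $\chi_a$ and $\chi_s$ are defined in Definition~\ref{definitionAcyclicStarChromatic}, so the match is exact and the proof reduces to a two-line substitution in each case.
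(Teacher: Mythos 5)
Your proposal is correct and is exactly the paper's proof: the authors likewise obtain this theorem as a direct consequence of Lemma~\ref{lemmaMaxPairsParts} applied to the colour classes of an optimal acyclic (resp.\ star) colouring, with $\hat{\boxli}(P_G)$ bounded by Lemma~\ref{lemmaBoxliForests} (resp.\ Lemma~\ref{lemmaBoxliStars}). Your write-up merely spells out the substitution the paper leaves implicit, including the harmless diagonal pairs $i=j$, so there is nothing to add.
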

\begin{proof}
The theorem follows directly from Lemma \ref{lemmaMaxPairsParts}, Lemma \ref{lemmaBoxliForests}, and Lemma \ref{lemmaBoxliStars}.  
\end{proof}

This, together with some existing results from literature, gives us a few easy corollaries. Alon, Mohar, and Sanders have showed that a graph embeddable in a surface of Euler genus $g$ has an acyclic chromatic number in $O(g^{4/7})$ \cite{alonacyclicgenus}. It is noted by Esperet and Joret in \cite{esperet2011boxicity}, using results of Nesetril, Ossona de Mendez, Kostochka, and Thomason, that graphs with no $K_t$ minor have an acyclic chromatic number in $\order{t^2 \log t}$. Hence the following corollary.

\begin{corollary}
\label{corollaryBoxliGenus}
\begin{enumerate}[(i)]
\item For a graph $G$ with Euler genus $g$, $\boxli(G) \in O(g^{4/7})$, 
\item for a graph $G$ with no $K_t$ minor, $\boxli(G) \in O(t^2 \log t)$, and 
\end{enumerate}
\end{corollary}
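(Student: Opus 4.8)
The plan is to derive both parts of Corollary \ref{corollaryBoxliGenus} as immediate consequences of Theorem \ref{theoremBoxliAcyclicStar}, using the acyclic chromatic number bound $\boxli(G) \leq 2\chi_a(G) + 13.68\log(\chi_a(G))$. The key observation is that this bound is essentially linear in $\chi_a(G)$ (the logarithmic term is subdominant), so $\boxli(G) \in O(\chi_a(G))$ for any graph $G$. Therefore, to bound $\boxli(G)$ it suffices to plug in known upper bounds on $\chi_a(G)$ coming from structural graph theory.

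For part (i), I would invoke the result of Alon, Mohar, and Sanders \cite{alonacyclicgenus} stating that a graph embeddable in a surface of Euler genus $g$ satisfies $\chi_a(G) \in O(g^{4/7})$. Substituting this into $\boxli(G) \in O(\chi_a(G))$ yields $\boxli(G) \in O(g^{4/7})$, since the $\log(\chi_a(G)) = O(\log g)$ term is negligible compared to $g^{4/7}$. For part (ii), I would similarly use the bound $\chi_a(G) \in O(t^2 \log t)$ for graphs with no $K_t$ minor, attributed by Esperet and Joret \cite{esperet2011boxicity} to results of Nešetřil, Ossona de Mendez, Kostochka, and Thomason. Again substituting into the linear-in-$\chi_a$ bound gives $\boxli(G) \in O(t^2 \log t)$, with the additive logarithmic correction absorbed into the big-$O$.

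Since both parts are direct substitutions, there is no real obstacle to overcome; the only thing to verify is that the subdominant $\log(\chi_a(G))$ term from Theorem \ref{theoremBoxliAcyclicStar} does not inflate the final asymptotic order, which is trivially true in both cases because $\log(\chi_a(G))$ is polylogarithmic in the relevant parameter while the main term is polynomial. Thus the proof reduces to citing the two external acyclic-chromatic-number bounds and applying Theorem \ref{theoremBoxliAcyclicStar}.
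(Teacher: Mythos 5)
Your proposal is correct and matches the paper's own argument exactly: the paper derives both parts by combining Theorem \ref{theoremBoxliAcyclicStar} with the Alon--Mohar--Sanders bound $\chi_a(G) \in O(g^{4/7})$ and the $O(t^2 \log t)$ bound for $K_t$-minor-free graphs noted by Esperet and Joret. Your additional remark that the $\log \chi_a$ term is subdominant is the (implicit) reason the paper calls these ``easy corollaries,'' so there is nothing to add.
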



\subsection{Planar graphs}

Since planar graphs have acyclic chromatic number at most $5$ \cite{borodin1979acyclic}, it follows from Theorem \ref{theoremBoxliAcyclicStar} that, for every planar graph $G$, $\boxli(G) \leq 42$. Using Schnyder's celebrated result on non-crossing straight line plane drawings of planar graphs we improve this bound to the best possible.

\begin{theorem}[Schnyder, Theorem $1.1$ in \cite{schnyder1990embedding}]
\label{theoremSchnyder}
Let $\lambda_1$, $\lambda_2$, $\lambda_3$ be three pairwise non parallel straight lines in the plane. Then, each plane graph has a straight line embedding in which any two disjoint edges are separated by a straight line parallel to $\lambda_1$, $\lambda_2$ or $\lambda_3$. 
\end{theorem}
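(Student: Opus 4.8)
The plan is to prove the statement first for a maximal plane graph, i.e.\ a triangulation, and then deduce the general case. First I would note that triangulating a plane graph by adding edges destroys nothing we need: if $e,f$ are vertex-disjoint edges of the original graph, they remain vertex-disjoint edges of the triangulation, and a non-crossing straight-line drawing of the triangulation restricts to a straight-line drawing of the subgraph. Hence it suffices to produce, for a triangulation, a straight-line drawing in which \emph{every} pair of vertex-disjoint edges is separated by a line parallel to one of $\lambda_1,\lambda_2,\lambda_3$. Since an affine change of coordinates preserves both parallelism and the separation property, I would further assume that $\lambda_1,\lambda_2,\lambda_3$ are the three edge directions of a fixed reference triangle. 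Positions can then be described by normalised barycentric coordinates $(x_1,x_2,x_3)$ summing to a constant, where $x_i$ is constant along lines parallel to $\lambda_i$; in this language ``separated by a line parallel to $\lambda_i$'' becomes simply ``the $x_i$-ranges of the two edges are disjoint intervals''.

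Next I would equip the triangulation with a \emph{Schnyder wood} (realiser): an orientation and $3$-colouring of the interior edges into three trees $T_1,T_2,T_3$ rooted at the three outer vertices $a_1,a_2,a_3$, obeying the local rule that at each interior vertex the outgoing edges of colours $1,2,3$ occur in this fixed cyclic order with the incoming edges of the other two colours nested between them. Existence can be established by induction, repeatedly deleting a vertex from the outer boundary (equivalently, via a canonical ordering of the triangulation), which is routine. For each interior vertex $v$, the three monochromatic paths from $v$ to the roots $a_1,a_2,a_3$ partition the triangle into three regions $R_1(v),R_2(v),R_3(v)$; I would set $x_i(v)$ to be the number of faces contained in $R_i(v)$, place the three outer vertices at the corners, and normalise so the three coordinates sum to a constant.

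The heart of the proof is the monotonicity lemma for these region counts: whenever $uv$ is a colour-$i$ edge directed towards root $i$, one has strict containment $R_i(u)\subsetneq R_i(v)$, so $x_i$ strictly increases along colour-$i$ edges towards the root, and more generally for any vertex $w$ not on the colour-$i$ path of $v$ the counts obey a strict inequality separating $w$ from that path. I would prove this by a direct face-counting argument describing how a single tree edge shifts the three bounding paths. Granting these inequalities, take two vertex-disjoint edges $e$ and $f$ and compare their four endpoints through the three tree-induced partial orders; one then shows that in at least one colour $i$ the whole of $e$ lies strictly ``closer to root $i$'' than all of $f$ (or vice versa), so the closed $x_i$-interval spanned by $e$ is disjoint from that spanned by $f$, and a line parallel to $\lambda_i$ separates them.

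The main obstacle is exactly this monotonicity lemma together with the case analysis that converts it into strict separation of an arbitrary vertex-disjoint pair: one must rule out, over every way the four endpoints can be related in $T_1,T_2,T_3$ simultaneously, that the two edges interleave in all three coordinates at once. This is the combinatorial core of Schnyder's argument, and securing \emph{strict} inequalities (merely weak ones would give only a non-crossing-up-to-touching drawing, not genuine separation) is the delicate point. By contrast, the reduction to a triangulation, the affine normalisation of $\lambda_1,\lambda_2,\lambda_3$, and the construction of the realiser are comparatively mechanical.
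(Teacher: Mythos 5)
The paper itself contains no proof of this statement: it is quoted verbatim as Theorem 1.1 of \cite{schnyder1990embedding} and used as a black box in the proof of Theorem \ref{theoremBoxliPlanar}. Your outline --- reduction to triangulations, affine normalisation of $\lambda_1,\lambda_2,\lambda_3$, construction of a realiser (Schnyder wood), face-counting region coordinates, the strict monotonicity lemma for regions, and the concluding case analysis showing that some coordinate interval of one edge lies strictly below that of the other --- is precisely Schnyder's original argument from the cited reference, and it is correct as a proof plan.
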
 

This immediately gives us the following tight bound for planar graphs.

\begin{theorem}
\label{theoremBoxliPlanar}
Separation dimension of a planar graph is at most $3$. More over there exist planar graphs with separation dimension $3$.
\end{theorem}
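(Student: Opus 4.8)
The plan is to prove the two halves separately: the upper bound $\boxli(G) \le 3$ for planar $G$ by converting Schnyder's planar drawing into a three-dimensional separating embedding, and the tightness by exhibiting $K_4$ as a planar graph with $\boxli(K_4) = 3$. Throughout I would use the characterisation, noted after Definition \ref{definitionPairwiseSuitable}, of $\boxli(G)$ as the least $k$ for which $V(G)$ admits an embedding into $\R^k$ separating every pair of disjoint edges by a hyperplane normal to a coordinate axis.

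For the upper bound I would fix three pairwise non-parallel lines $\lambda_1, \lambda_2, \lambda_3$ in the plane and let $n_1, n_2, n_3$ be unit vectors normal to them. By Theorem \ref{theoremSchnyder}, $G$ has a straight-line plane drawing $p : V(G) \to \R^2$ in which any two disjoint edges are separated by a line parallel to some $\lambda_i$. From $p$ I would build an embedding $\phi : V(G) \to \R^3$ by projecting onto the three normal directions, setting $\phi(v) = (\langle p(v), n_1 \rangle, \langle p(v), n_2 \rangle, \langle p(v), n_3 \rangle)$. It then suffices to check that $\phi$ separates every pair of disjoint edges by an axis-normal hyperplane. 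Take disjoint edges $e = \{a, b\}$ and $f = \{c, d\}$. Schnyder supplies a line $\ell$ parallel to some $\lambda_i$ with the segments $p(a)p(b)$ and $p(c)p(d)$ on opposite sides of $\ell$; since the whole segments are separated, so are their four endpoints. Projecting onto the normal direction $n_i$ (perpendicular to $\ell$) sends the two open half-planes bounded by $\ell$ into two disjoint half-lines, so $\{\langle p(a), n_i\rangle, \langle p(b), n_i\rangle\}$ and $\{\langle p(c), n_i\rangle, \langle p(d), n_i\rangle\}$ fall into disjoint intervals of $\R$. That is exactly a hyperplane normal to the $i$-th axis separating $e$ from $f$ under $\phi$, whence $\boxli(G) \le 3$.

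For tightness I would show $\boxli(K_4) = 3$. As $K_4$ is planar, the upper bound already gives $\boxli(K_4) \le 3$, so it remains to prove $\boxli(K_4) \ge 3$. The only pairs of disjoint edges of $K_4$ are its three perfect matchings. A single permutation $\sigma$ of the four vertices, read as a linear order $x_1, x_2, x_3, x_4$, separates a matching $\{g, h\}$ only when $g \prec_{\sigma} h$ or $h \prec_{\sigma} g$; the only matching for which this holds is $\{\{x_1, x_2\}, \{x_3, x_4\}\}$, since the remaining two matchings are interleaved ($\{x_1, x_3\}$ versus $\{x_2, x_4\}$) and nested ($\{x_1, x_4\}$ versus $\{x_2, x_3\}$) and hence are not separated by $\sigma$. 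Thus each permutation handles at most one of the three matchings, so any pairwise suitable family for $K_4$ must contain at least three permutations, giving $\boxli(K_4) \ge 3$ and therefore equality.

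The only delicate point is the translation in the second paragraph: Schnyder's guarantee concerns separating the edge-\emph{segments} by a line of a prescribed slope, whereas separation dimension asks for the two edges' vertex sets to lie on opposite sides of an \emph{axis-normal} hyperplane. Assigning each fixed slope $\lambda_i$ its own coordinate axis via the normal projection $\langle \cdot, n_i \rangle$ is precisely what lets the three prescribed slopes behave like three orthogonal directions in $\R^3$. In writing this up I would be careful to record that the separation of the segments forces the separation of their four endpoints, and that projection onto $n_i$ is monotone across $\ell$, so no spurious overlap is introduced in the $i$-th coordinate.
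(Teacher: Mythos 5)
Your proposal is correct and takes essentially the same approach as the paper: both invoke Schnyder's theorem (Theorem \ref{theoremSchnyder}) and turn the three prescribed slopes into three coordinate directions by projecting onto their normals --- you phrase it as a separating embedding $\phi(v) = \left( \langle p(v), n_1 \rangle, \langle p(v), n_2 \rangle, \langle p(v), n_3 \rangle \right)$ in $\R^3$, while the paper equivalently writes down the three permutations induced by those projections. Your tightness argument for $K_4$ is also the paper's (each permutation separates exactly one of the three perfect matchings, so three are needed), merely spelled out in more detail by identifying the interleaved and nested matchings.
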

\begin{proof}
Consider the following three pairwise non parallel lines in $\R^2$: $\lambda_1 = \{(x,y) : y = 0, x \in \R \} $, $\lambda_2 = \{(x,y) : x = 0, y \in \R \}$ and $\lambda_3 = \{(x,y): x,y \in \R, x+y = 0\}$. Let $f: V(G) \into \R^2$ be  an embedding  such that any two disjoint edges in $G$ are separated by a straight line parallel to $\lambda_1$, $\lambda_2$ or $\lambda_3$. For every vertex $v$, let $v_x$ and $v_y$ denote the projections of $f(v)$ on to the $x$ and $y$ axes respectively. 

Construct $3$ permutations $\sigma_1, \sigma_2, \sigma_3$ such that $u_x < v_x \implies u \prec_{\sigma_1} v$, $u_y < v_y \implies u \prec_{\sigma_2} v$, and $u_x + u_y < v_x + v_y \implies u \prec_{\sigma_3} v$, with ties broken arbitrarily. Now it is easy to verify that any two disjoint edges of $G$ separated by a straight line parallel  to $\lambda_i$ in the embedding $f$, will be separated in $\sigma_i$.

Tightness of the theorem follows from considering $K_4$, the complete graph on $4$ vertices which is a planar graph. Any single permutation of its $4$ vertices separates exactly one pair of disjoint edges. Since $K_4$ has $3$ pairs of disjoint edges, we need exactly $3$ permutations.
\end{proof}


\subsection{Subdivisions of graphs} 

\begin{definition}
A graph $G'$ is called a {\em subdivision} of a graph $G$ if $G'$ is obtained from $G$ by replacing a subset of edges of $G$ with independent paths between their ends such that none of these new paths has an inner vertex on another path or in $G$. A subdivision of $G$ where every edge of $G$ is replaced by a $k$-length path is denoted as $G^{1/k}$. The graph $G^{1/2}$ is called {\em fully subdivided} $G$.
\end{definition}

The main result in this section is an upper bound for $\boxli(G^{1/2})$ in terms of $\chi(G)$, where $\chi(G)$ denotes the chromatic number of $G$. It is easy to see that the acyclic chromatic number of $G^{1/k}$ for $k \geq 3$ is at most $3$ for any graph $G$ (Use the first two colours to properly colour the internal vertices in every path introduced by the subdivision and give the third colour to all the original vertices) \cite{wood2005acyclic}. Hence, by Theorem \ref{theoremBoxliAcyclicStar}, $\boxli(G^{1/k}) \in \order{1}, \forall k > 2$.  
 Acyclic chromatic number of $G^{1/2}$ is at most $\max \{ \chi(G), 3 \}$ \cite{wood2005acyclic} and hence $\boxli(G^{1/2}) \in \order{\chi(G)}$ by Theorem \ref{theoremBoxliAcyclicStar}. We improve this easy upper bound considerably and show that $\boxli(G^{1/2}) \leq (1 + o(1))\log\log \chi(G)$. In Section \ref{sectionSubdividedClique}, we come up with a different strategy to show that $\boxli(K_n^{1/2}) \geq \frac{1}{2} \floor{\log \log (n-1)}$ there by demonstrating the tightness of the above upper bound.

The upper bound on $\boxli(G^{1/2})$ is obtained by a constructing an interval order based on $G$ of height $\chi(G) - 1$ and then showing that its poset dimension is an upper bound on $\boxli(G^{1/2})$. We need some more definitions and notation before proceeding. 

\begin{definition}[Poset dimension]
Let $(\mathcal{P}, \lhd)$ be a poset (partially ordered set). A {\em linear extension} $L$ of $\mathcal{P}$ is a total order which satisfies $(x \lhd y \in \mathcal{P}) \implies (x \lhd y \in L)$. A {\em realiser} of $\mathcal{P}$ is a set of linear extensions of $\mathcal{P}$, say $\mathcal{R}$, which satisfy the following condition: for any two distinct elements $x$ and $y$, $x\lhd y \in \mathcal{P}$ if and only if $x \lhd y \in L$, $\forall L \in \mathcal{R}$.  
The \emph{poset dimension} of $\mathcal{P}$, denoted by $dim(\mathcal{P})$, is the minimum integer $k$ such that there exists a realiser of $\mathcal{P}$ of cardinality $k$. 
\end{definition}

\begin{definition}[Interval dimension]
A \emph{open interval} on the real line, denoted as $(a,b)$, where $a,b \in \R$ and $a < b$, is the set $\{x \in \mathbb{R} : a <  x < b\}$. For a collection $C$ of open intervals on the real line the partial order $(C, \lhd)$ defined by the relation $(a,b) \lhd (c,d)$ if $b \leq c$ in $\R$ is called the {\em interval order} corresponding to $C$. The poset dimension of this interval order $(C,\lhd)$ is called the \emph{interval dimension} of $C$ and is denoted by $\idim(C)$.
\end{definition}

\begin{theorem}
\label{theoremSubdivisionIntervalOrder}
For any graph $G$ and a permutation $\sigma$ of $V(G)$, let $C_{G, \sigma}$ denote the collection of open intervals $(\sigma(u), \sigma(v)), \{u,v\} \in E(G), u \prec_{\sigma} v$. Then,
$$ \boxli(G^{1/2}) \leq \min_{\sigma} \idim ( C_{G,\sigma} ) + 2,$$
where the minimisation is done over all possible permutations $\sigma$ of $V(G)$.
\end{theorem}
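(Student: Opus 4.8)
The plan is to exhibit, for a well-chosen permutation $\sigma$ of $V(G)$, a pairwise suitable family for $G^{1/2}$ of size $\idim(C_{G,\sigma}) + 2$, and then minimise over $\sigma$. First I would set up notation: write $V(G^{1/2}) = V(G) \cup \{w_e : e \in E(G)\}$, where $w_e$ is the vertex subdividing $e$, so that the edges of $G^{1/2}$ are the ``half-edges'' $\{v, w_e\}$ with $v \in e$. Two half-edges $\{a, w_e\}$ and $\{b, w_f\}$ are disjoint exactly when $e \neq f$ and $a \neq b$, so it suffices to separate every such pair. Writing each $e = \{u_e, v_e\}$ with $\sigma(u_e) < \sigma(v_e)$ and associating with $e$ the open interval $I_e = (\sigma(u_e), \sigma(v_e))$, I call $\{u_e, w_e\}$ the \emph{left} half-edge and $\{v_e, w_e\}$ the \emph{right} half-edge of $e$. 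Fix $\sigma$ attaining the minimum, put $d = \idim(C_{G,\sigma})$, and take a realiser $L_1, \ldots, L_d$ of the interval order $(C_{G,\sigma}, \lhd)$, so that $I_e \lhd I_f$ (i.e. $\sigma(v_e) \le \sigma(u_f)$) iff $e$ precedes $f$ in every $L_k$.

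Next I would construct the $d + 2$ permutations. The two \emph{endpoint} permutations $\pi_L$ and $\pi_R$ place the original vertices at their $\sigma$-ranks; $\pi_L$ additionally puts each $w_e$ immediately to the right of $u_e$ (inside $I_e$ but hugging its left end), while $\pi_R$ puts each $w_e$ immediately to the left of $v_e$. The effect is that in $\pi_L$ every left half-edge collapses to a tiny cluster around $\sigma(u_e)$, and in $\pi_R$ every right half-edge collapses around $\sigma(v_e)$. The $d$ \emph{realiser} permutations $\rho_1, \ldots, \rho_d$ again place the original vertices at their $\sigma$-ranks, but place each $w_e$ at a representative point $p_e \in I_e$ chosen so that the induced order of the subdivision vertices is exactly $L_k$. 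The crucial structural input here is that a linear extension of an interval order can always be realised by one representative point per interval; I would prove this by processing the edges in $L_k$-order and placing $p_e$ as far left as possible, just above the current maximum, the placement never getting stuck because $L_k$ extends $\lhd$, forcing $\max_{f \prec_{L_k} e} \sigma(u_f) < \sigma(v_e)$, the right endpoint of $I_e$ (otherwise $I_e \lhd I_f$ would put $e$ before $f$).

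The verification then splits on the relation between $I_e$ and $I_f$ and on the left/right types of the two half-edges. If $I_e$ and $I_f$ are \emph{comparable}, say $I_e \lhd I_f$, then in \emph{any} $\rho_k$ all of $h_1 = \{a,w_e\}$ sits left of $\sigma(v_e) \le \sigma(u_f)$ and all of $h_2 = \{b,w_f\}$ sits right of it, so $h_1 \prec_{\rho_k} h_2$; this disposes of every comparable pair (the degenerate case $v_e = u_f$ needs only the observation that $a \neq b$). If $I_e, I_f$ \emph{overlap}, I would argue by type: two \emph{left} half-edges have $u_e \neq u_f$ and are separated in $\pi_L$ by their left-endpoint order; two \emph{right} half-edges have $v_e \neq v_f$ and are separated in $\pi_R$; and a \emph{mixed} pair, say the left half-edge of $e$ against the right half-edge of $f$, is separated in a realiser permutation, since the span of $h_1$ ends at $p_e$ and the span of $h_2$ begins at $p_f$, so they are separated precisely when $p_e < p_f$, and because $I_e, I_f$ are incomparable the realiser supplies a $\rho_k$ with $e \prec_{L_k} f$. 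The shared-endpoint configurations ($u_e = u_f$, $v_e = v_f$, or $v_e = u_f$) fall into these same cases and would be checked to cause no trouble.

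The main obstacle, and the only genuinely non-routine step, is the point-realisability of the linear extensions $L_k$ inside the intervals $I_e$; everything else is a finite case check. A secondary subtlety is seeing why the realiser alone cannot separate the overlapping left-left and right-right pairs: an original vertex is shared among many half-edges, so its $\sigma$-position is forced and can straddle a neighbouring subdivision vertex, which is exactly what makes the two extra endpoint permutations necessary and accounts for the ``$+2$''. Assembling the pieces shows $\{\pi_L, \pi_R, \rho_1, \ldots, \rho_d\}$ is pairwise suitable for $G^{1/2}$, giving $\boxli(G^{1/2}) \le \idim(C_{G,\sigma}) + 2$ for the chosen $\sigma$, and minimising over $\sigma$ yields the claim.
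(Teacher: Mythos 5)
Your proposal is correct and takes essentially the same approach as the paper: a realiser $\{L_1,\ldots,L_d\}$ of $(C_{G,\sigma},\lhd)$ yields $d$ permutations in which the subdivision vertices are ordered according to the linear extensions while the original vertices retain their $\sigma$-order and each subdivision vertex stays between its two neighbours, your $\pi_L,\pi_R$ are exactly the paper's $\sigma_{d+1},\sigma_{d+2}$, and your case analysis (comparable intervals handled in every permutation, left-left via $\pi_L$, right-right via $\pi_R$, mixed overlapping via the realiser) matches the paper's. The only cosmetic difference is that you realise each $L_k$ by greedily chosen points inside the open intervals, whereas the paper builds the same permutations by inserting the original vertices one by one into the $L_k$-ordering of the subdivision vertices.
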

\begin{proof}
Let $\sigma$ be any permutation of $V(G)$. We relabel the vertices of $G$ so that $v_1 \prec_{\sigma} \cdots \prec_{\sigma} v_n$, where $n = |V(G)|$. For every edge $e = \{v_i, v_j\} \in E(G), i < j$, the new vertex in $G^{1/2}$ introduced by subdividing $e$ is denoted as $u_{ij}$. For a new vertex $u_{ij}$, its two neighbours, $v_i$ and $v_j$ will be respectively called the {\em left neighbour} and {\em right neighbour} of $u_{ij}$. We call an edge of the form $\{v_i, u_{ij}\}$ as a {\em left edge} and one of the form $\{u_{ij}, v_j\}$ as a {\em right edge}.

Let $\mathcal{R} = \{L_1, \ldots, L_d\}$ be a realiser for $(C_{G, \sigma}, \lhd)$ such that $d = \idim(C_{G,\sigma})$. For each total order $L_p, p \in [d]$, we construct a permutation $\sigma_p$ of $V(G^{1/2})$ as follows. First, the subdivided vertices are ordered from left to right as the corresponding intervals are ordered in $L_p$, i.e, $u_{ij} \prec_{\sigma_p} u_{kl} \iff (i,j) \prec_{L_p} (k,l)$. Next the original vertices are introduced into the order one by one as follows. The vertex $v_1$ is placed as the left most vertex. Once all the vertices $v_i, i < j$ are placed, we place $v_j$ at the left most possible position so that $v_{j-1} \prec_{\sigma_p} v_j$ and $u_{ij} \prec_{\sigma_p} v_j, \forall i <j$. This ensures that $v_j \prec_{\sigma_p} u_{jk}, \forall k >j$ because $u_{ij'} \prec_{\sigma_p} u_{jk}, \forall j' \leq j$ (Since $(i,j) \lhd (j,k)$). Now we construct two more permutations $\sigma_{d+1}$ and $\sigma_{d+2}$ as follows. In both of them, first the original vertices are ordered as $v_1 \prec \cdots \prec v_n$. In $\sigma_{d+1}$, the subdivided vertices are placed immediately after its left neighbour, i.e., $v_i \prec_{\sigma_{d+1}} u_{ij} \prec_{\sigma_{d+1}} v_{i+1}$ for all $\{i, j \} \in E(G)$. In $\sigma_{d+2}$, the subdivided vertices are placed immediately before its right neighbour, i.e., $v_{j-1} \prec_{\sigma_{d+2}} u_{ij} \prec_{\sigma_{d+2}} v_{j}$ for all $\{i, j \} \in E(G)$. Notice that in all the permutations so far constructed, the left (right) neighbour of every subdivided vertex is placed to its left (right).

We complete the proof by showing that $\F = \{\sigma_1, \ldots, \sigma_{d+2}\}$ is pairwise suitable for $G^{1/2}$ by analysing the following cases.  Any two disjoint left edges are separated in $\sigma_{d+1}$ and any two disjoint right edges are separated in $\sigma_{d+2}$. If $(i,j) \lhd (k,l)$, then every pair of disjoint edges among those incident on $u_{ij}$ or $u_{kl}$ are separated in every permutation in $\F$. Hence the only non-trivial case is when we have a left edge $\{v_i, u_{ij}\}$ and a right edge $\{u_{kl}, v_l\}$ such that $(i,j) \cap (k,l) \neq \emptyset$. Since $(i,j)$ and $(k,l)$ are incomparable in $(C_{G, \sigma}, \lhd)$, there exists a permutation $\sigma_p, p \in [d]$ such that $u_{ij} \prec_{\sigma_p} u_{kl}$. Since $v_i$ is before $u_{ij}$ and $v_l$ is after $u_{kl}$ in every permutation, $\sigma_p$ separates $\{v_i, u_{ij}\}$ from $\{u_{kl}, v_l\}$.  
\end{proof}

The {\em height} of a partial order is the size of a largest chain in it. It was shown by F\"{u}redi, Hajnal, R\"{o}dl and Trotter \cite{furedi1991interval} that the dimension of an interval order of height $h$ is at most $\log\log h + (\frac{1}{2} + o(1))\log\log\log h$ (see also Theorem $9.6$ in \cite{trotter1997new}). The next corollary uses this result along with Theorem \ref{theoremSubdivisionIntervalOrder}.

\begin{corollary}
\label{corollarySubdivisionChromaticNumber}
For a graph $G$ with chromatic number $\chi(G)$, 
$$ 
	\boxli(G^{1/2}) \leq 
		\log\log (\chi(G)-1) + 
		\left( \frac{1}{2} + o(1) \right) \log\log\log (\chi(G)-1) + 2.
$$
\end{corollary}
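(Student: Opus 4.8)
The plan is to apply Theorem \ref{theoremSubdivisionIntervalOrder}, which states $\boxli(G^{1/2}) \leq \min_{\sigma} \idim(C_{G,\sigma}) + 2$, and to combine it with the Füredi--Hajnal--Rödl--Trotter bound on the dimension of an interval order in terms of its height. Since that bound depends only on the height $h$, giving $\idim \leq \log\log h + (\tfrac{1}{2} + o(1))\log\log\log h$, it suffices to exhibit a \emph{single} permutation $\sigma$ of $V(G)$ for which the interval order $(C_{G,\sigma}, \lhd)$ has height at most $\chi(G) - 1$. Substituting $h = \chi(G) - 1$ into the two estimates and adding the $2$ from Theorem \ref{theoremSubdivisionIntervalOrder} then yields exactly the claimed inequality.

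First I would fix a proper colouring $c : V(G) \into [\chi(G)]$ and take $\sigma$ to be any permutation that orders the vertices by colour class, i.e.\ $c(u) < c(v) \implies u \prec_{\sigma} v$, breaking ties within a colour class arbitrarily. The point of this choice is twofold: for every edge $\{u,v\}$ with $u \prec_{\sigma} v$ we automatically get $c(u) < c(v)$, since the colouring is proper (so $c(u) \neq c(v)$) and $\sigma$ respects the colour order; and more generally $\sigma(x) \leq \sigma(y)$ always implies $c(x) \leq c(y)$.

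The heart of the argument is bounding the height of $(C_{G,\sigma}, \lhd)$. Recall that $(\sigma(u),\sigma(v)) \lhd (\sigma(u'),\sigma(v'))$ means $\sigma(v) \leq \sigma(u')$, so a chain of length $h$ corresponds to edges $\{u_1,v_1\}, \ldots, \{u_h, v_h\}$ (with $u_i \prec_{\sigma} v_i$) satisfying $\sigma(v_i) \leq \sigma(u_{i+1})$ for each $i$. Reading these through the colouring produces the chain of colour inequalities
$$ c(u_1) < c(v_1) \leq c(u_2) < c(v_2) \leq \cdots \leq c(u_h) < c(v_h), $$
where each strict step comes from an edge (via properness) and each weak step from the relation $\lhd$. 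Since there are $h$ strict steps, $c(v_h) \geq c(u_1) + h \geq 1 + h$, and as all colours lie in $[\chi(G)]$ we conclude $h \leq \chi(G) - 1$. Hence the height of the interval order is at most $\chi(G) - 1$.

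Finally I would assemble the pieces: for this $\sigma$ the interval order has height at most $\chi(G)-1$, so the interval-dimension bound gives $\idim(C_{G,\sigma}) \leq \log\log(\chi(G)-1) + (\tfrac{1}{2} + o(1))\log\log\log(\chi(G)-1)$, and Theorem \ref{theoremSubdivisionIntervalOrder} contributes the additive $2$. The only genuinely delicate point is the height computation above --- getting the directions of the strict and weak inequalities right and confirming that properness of the colouring forces the strict steps; everything else is a direct substitution into results already in hand.
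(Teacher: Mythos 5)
Your proposal is correct and follows essentially the same route as the paper: choose a permutation $\sigma$ that orders the vertices by the colour classes of a proper colouring, bound the height of $(C_{G,\sigma},\lhd)$ by $\chi(G)-1$, and then combine the F\"{u}redi--Hajnal--R\"{o}dl--Trotter bound with Theorem \ref{theoremSubdivisionIntervalOrder}. The only difference is that you spell out the chain-of-colours argument that the paper dismisses as ``easy to see,'' and your verification of it is accurate.
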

\begin{proof}
Let $V_1, \ldots, V_{\chi(G)}$ be the colour classes of an optimal proper colouring of $G$. Let $\sigma$ be a permutation of $V(G)$ such that $V_1 \prec_{\sigma} \cdots \prec_{\sigma} V_{\chi(G)}$. Now it is easy to see that the longest chain in $(C_{G,\sigma}, \lhd)$ is of length at most $\chi(G) - 1$. Hence the result follows from that of F\"{u}redi et al. \cite{furedi1991interval} and Theorem \ref{theoremSubdivisionIntervalOrder} above.
\end{proof}

\subsubsection*{Tightness of Corollary \ref{corollarySubdivisionChromaticNumber}}
Theorem \ref{theoremKnHalf} in Section \ref{sectionSubdividedClique} proves that $\boxli(K_n^{1/2}) \geq \frac{1}{2} \floor{\log\log(n-1) }$. Hence the upper bound in Corollary \ref{corollarySubdivisionChromaticNumber} is tight up to a constant factor.


\subsection{Hypercube}
\label{sectionHypercube}


\begin{definition}
\label{definitionHypercube}
For a positive integer $d$, the {\em $d$-dimensional hypercube} $Q_d$ is the graph with $2^d$ vertices where each vertex $v$ corresponds to a distinct $d$-bit binary string $g(v)$ such that two vertices $u, v \in V(Q_d)$ are adjacent if and only if $g(u)$ differs from $g(v)$ at exactly one bit position. Let $g_i(v)$ denote the $i$-th bit from right in $g(v)$, where $i \in [d]$. The number of ones in $g(v)$ is called the {\em hamming weight} of $v$ and is denoted by $h(v)$.
\end{definition}

\begin{observation}
\label{observationHypercubeNonadjacentEdges}
Let $a,b,c,$ and $d$ be four distinct vertices in the hypercube $Q_d$ with $\{a,b\}, \{c,d\} \in E(Q_d)$ such that $g(a)$ and $g(b)$ differ only in the $i$-th bit position from right and $g(c)$ and $g(d)$ differ only in the $j$-th position from right. Then there exists some $k \in [d] \setminus \{i,j\}$ such that $g_k(a)$ ($=g_k(b)$) differs from $g_k(c)$ ($=g_k(d)$).
\end{observation}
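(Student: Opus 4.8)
The plan is to prove the contrapositive by a counting argument confined to a small subcube. Suppose no such coordinate $k$ exists. Observe first that for any $k \in [d] \setminus \{i,j\}$ the edge $\{a,b\}$ flips only bit $i \neq k$, so $g_k(a) = g_k(b)$; likewise $g_k(c) = g_k(d)$. Thus the assumed failure of the claim says exactly that $g_k(a) = g_k(c)$ for every $k \notin \{i,j\}$, i.e. that all four strings $g(a), g(b), g(c), g(d)$ agree on every coordinate outside $\{i,j\}$. I would then show this is incompatible with $a,b,c,d$ being four distinct vertices.

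If $i = j$, the four strings agree everywhere except possibly at the single position $i$, hence take at most two distinct values; this already contradicts the distinctness of $a,b,c,d$.

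Now suppose $i \neq j$, so that $\{i,j\}$ has two elements. Since the four vertices share a common value on every coordinate outside $\{i,j\}$, each is completely determined by its pair of bits $\bigl(g_i(\cdot), g_j(\cdot)\bigr) \in \{0,1\}^2$; for the four to be distinct, these four pairs must exhaust all of $\{0,1\}^2$. The edge structure then imposes conflicting constraints: because $\{a,b\}$ differs only at $i$, the pairs of $a$ and $b$ share a common $j$-bit, say $\beta$; because $\{c,d\}$ differs only at $j$, the pairs of $c$ and $d$ share a common $i$-bit, say $\gamma$. The pair $(\gamma, \beta)$ then lies among $\{a,b\}$ (it is whichever of the two has $i$-bit $\gamma$) and also among $\{c,d\}$ (whichever has $j$-bit $\beta$), forcing one of $a,b$ to coincide with one of $c,d$ --- contradicting distinctness.

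The argument is elementary; the only mild obstacle is the case bookkeeping, and the essential observation is simply that assuming the claim fails collapses all coordinates outside $\{i,j\}$ and confines $a,b,c,d$ to a subcube of dimension at most two, where four distinct vertices cannot realise the required edge pattern. Hence some $k \in [d] \setminus \{i,j\}$ must satisfy $g_k(a) = g_k(b) \neq g_k(c) = g_k(d)$, as claimed.
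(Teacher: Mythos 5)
Your proof is correct and takes essentially the same route as the paper's: assume for contradiction that all four strings agree on every coordinate outside $\{i,j\}$, dispose of the case $i=j$ by noting at most two distinct strings remain, and for $i \neq j$ derive a contradiction inside the $2$-dimensional subcube spanned by coordinates $i$ and $j$. The only cosmetic difference is that the paper exhibits the bit-pattern $\bigl(1-g_i(c),\, 1-g_j(a)\bigr)$ as unattainable by any of the four vertices (so at most three distinct strings exist), whereas you exhibit the complementary pattern $(\gamma,\beta)$ as attained both by one of $\{a,b\}$ and by one of $\{c,d\}$ (forcing two of the four vertices to coincide); these are dual phrasings of the same pigeonhole argument.
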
 
\begin{proof}
Assume for contradiction that, for every $k \in [d] \setminus \{i,j\}$, $g_k(a) = g_k(b) = g_k(c) = g_k(d)$. if $i=j$ then there can be only $2$ distinct binary strings among $\{g(a), g(b), g(c), g(d) \}$. If $i \neq j$, then there can only be $3$ distinct binary strings among $\{g(a), g(b), g(c), g(d) \}$ since the $i$-th and $j$-th bit positions from right cannot simultaneously be $1 - g_i(c)$ and $1-g_j(a)$ respectively for any of the $4$ strings in the set.  This contradicts the distinctness of $a,b,c,$ and $d$. 
\end{proof}

\begin{theorem}
\label{theoremHypercube}
For the $d$-dimensional hypercube $Q_d$, 
$$ \frac{1}{2} \floor{\log\log(d-1)} \leq 
		\boxli(Q_d) \leq
		\floor{ \log\log d + \frac{1}{2} \log\log\log d + \log(\sqrt{2}\pi) + o(1) }
.$$ 
\end{theorem}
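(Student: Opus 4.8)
The plan is to handle the two bounds separately, the lower one being a short reduction and the upper one carrying the real content. For the lower bound, I would exhibit the fully subdivided clique $K_d^{1/2}$ as a subgraph of $Q_d$ and then invoke monotonicity (Observation~\ref{observationMonotonicity}) together with the lower bound for subdivided cliques (Theorem~\ref{theoremKnHalf}). Explicitly, map the $d$ base vertices of $K_d^{1/2}$ to the Hamming-weight-one strings $e_1,\dots,e_d$ and the subdivision vertex of the edge $\{u,v\}$ to the weight-two string $e_u+e_v$. Since $e_u+e_v$ differs from $e_u$ only in coordinate $v$ and from $e_v$ only in coordinate $u$, the two edges incident to each subdivision vertex become hypercube edges, and all images are distinct; hence $K_d^{1/2}\subseteq Q_d$ and $\boxli(Q_d)\ge \boxli(K_d^{1/2})\ge \tfrac12\floor{\log\log(d-1)}$.

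For the upper bound, the idea is to lift a $3$-suitable family of permutations of the $d$ coordinates to a pairwise suitable family of permutations of the $2^d$ strings. Begin with a smallest $3$-suitable family $\mathcal{E}=\{\sigma^1,\dots,\sigma^r\}$ of permutations of $[d]$; by Spencer's bound~\cite{scramble}, $r=N(d,3)\le \log\log d+\tfrac12\log\log\log d+\log(\sqrt2\,\pi)+o(1)$. From each $\sigma^p$ I would build a permutation of $V(Q_d)$ by ordering strings lexicographically, declaring a coordinate to be \emph{more significant} the later it occurs in $\sigma^p$: thus $u$ precedes $v$ exactly when, at the $\sigma^p$-latest coordinate on which $g(u)$ and $g(v)$ differ, $u$ carries the smaller bit. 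Since $\boxli(Q_d)$ is an integer not exceeding the real number $r$, the stated outer floor then follows.

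The crux is verifying that these $r$ permutations separate every pair of disjoint edges. Given disjoint edges $\{a,b\}$ (differing only in coordinate $i$) and $\{c,d\}$ (differing only in coordinate $j$), Observation~\ref{observationHypercubeNonadjacentEdges} supplies a coordinate $k\notin\{i,j\}$ on which $a,b$ agree but differ from the common value of $c,d$. Applying $3$-suitability with designated element $k$ yields $\sigma^p\in\mathcal{E}$ with $\{i,j\}\prec_{\sigma^p}k$, i.e.\ $k$ is more significant than both $i$ and $j$. Now let $M$ be the set of coordinates outside $\{i,j\}$ on which the common $\{a,b\}$-value disagrees with the common $\{c,d\}$-value; note $k\in M$. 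Let $\ell^\ast$ be the most significant coordinate of $M$, so that $\ell^\ast$ is at least as significant as $k$ and hence strictly more significant than $i$ and $j$. Above $\ell^\ast$ all four strings agree (coordinates outside $\{i,j\}$ and not in $M$ carry a common value, while $i,j$ lie below $\ell^\ast$), and at $\ell^\ast$ the pair $\{a,b\}$ lies on one side and $\{c,d\}$ on the other. Consequently the order induced by $\sigma^p$ places all of $\{a,b\}$ before all of $\{c,d\}$ (or the reverse), which is exactly the required separation.

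I expect the main obstacle to be this last piece of bookkeeping, specifically ensuring that neither coordinate $i$ nor $j$ --- where the endpoints of one edge disagree and could therefore split that edge across the order --- decides a comparison before the ``good'' coordinate $k$ does. This is precisely where $3$-suitability is essential rather than a mere pairwise ordering of coordinates: it guarantees a single permutation in which the two problematic coordinates $i,j$ are simultaneously dominated by the separating coordinate $k$. Translating Spencer's real-valued bound into the outer floor, and dispatching the degenerate case $i=j$ (where one needs only $i\prec_{\sigma^p}k$), are routine.
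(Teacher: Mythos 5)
Your proposal is correct and takes essentially the same route as the paper's proof: the lower bound via the weight-one/weight-two copy of $K_d^{1/2}$ inside $Q_d$ combined with Observation \ref{observationMonotonicity} and Theorem \ref{theoremKnHalf}, and the upper bound by lifting a smallest $3$-suitable family of permutations of the $d$ coordinates to lexicographic orders on the strings, using Observation \ref{observationHypercubeNonadjacentEdges} and $3$-suitability to obtain a coordinate $k$ dominating both $i$ and $j$. Your explicit bookkeeping with the set $M$ and the most significant coordinate $\ell^{\ast}$ simply spells out the step the paper compresses into ``it then follows from the definition of $\tau_s$,'' so no new ideas or gaps are involved.
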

\begin{proof}
Let $H$ be the subgraph of $Q_d$ induced on the vertex set $V(H) = \{v \in V(Q_d)~:~h(v) \in \{1,2\}\}$. Observe that $H$ is isomorphic to $K_d^{1/2}$ and therefore, by Theorem \ref{theoremKnHalf}, $\boxli(H) \geq \floor{ \log\log (d-1) }$. Hence the lower bound follows from  by Observation \ref{observationMonotonicity}.

Next we show the upper bound by using $3$-suitable permutations of the bit positions. Let $\E = \{ \sigma_1, \ldots , \sigma_r \}$ be a smallest $3$-suitable family of permutations of $[d]$. From \cite{scramble}, we know that $r \leq \floor{ \log \log d + \frac{1}{2}\log \log \log d + \log(\sqrt{2} \boxli) + o(1)  }$. For a permutation $\sigma \in \E$ and a pair $u,v \in V(Q_d)$, let $i_{\sigma}(u,v)$ denote the largest value of $\sigma(i)$ (over all $i \in [d]$) for which $g_{\sigma(i)}(u) \neq  g_{\sigma(i)}(v)$, i.e., the right most bit position where $u$ and $v$ differ if the bit positions are permuted according to $\sigma$. From $\E$, we construct a family of permutations $\F = \{\tau_1, \ldots , \tau_r\}$ that is pairwise suitable for $Q_d$. The permutation $\tau_j$ is constructed by first permuting the bit positions of all the binary strings according to $\sigma_j$ and then reading out the vertices in the right to left lexicographic order of the bit strings. That is, for $u, v \in V(Q_d)$, $u \prec_{\tau_j} v$ if $g_i(u) < g_i(v)$, where $i = i_{\sigma_j}(u,v)$.


In order to show that $\mathcal{F}$ is a pairwise suitable family of permutations for $Q_d$, consider two disjoint edges $\{a,b\}$, $\{c,d\}$ in $Q_d$ such that $g(a)$ and $g(b)$ differ only in the $l$-th position from right and $g(c)$ and $g(d)$ differ only in the $m$-th position from right. Then, from Observation \ref{observationHypercubeNonadjacentEdges}, we know that there exists a $k \in [d] \setminus \{l,m\}$ such that $g_k(a)$ ($=g_k(b)$) differs from $g_k(c)$ ($=g_k(d)$). Since $\E$ is a $3$-suitable family of permutations for $[d]$, there exists a $\sigma_s \in \E$ such that $ \{l,m\} \prec_{\sigma_s} k$. That is, $\sigma_s(l) < \sigma_s(k)$ and $\sigma_s(m) < \sigma_s(k)$. 
Hence, $i_{\sigma_s}(u,v) \geq \sigma_s(k), \, u \in \{a,b\}, v \in \{c,d\}$. It then follows from the definition of $\tau_s$ that either $\{a,b\}  \prec_{\tau_s} \{c,d\}$ or $\{c,d\} \prec_{\tau_s} \{a,b\}$.

\end{proof}



\section{Lower bounds}
\label{sectionLowerBound}

The tightness of many of the upper bounds we showed in the previous section relies on the lower bounds we derive in this section. First, we show that if a graph contains a uniform bipartite subgraph, then it needs a large separation dimension. This immediately gives a lower bound on separation dimension for complete bipartite graphs and hence a lower bound for every graph $G$ in terms $\omega(G)$. The same is used to obtain a lower bound on the separation dimension for random graphs of all density. Finally, it is used as a critical ingredient in proving a lower bound on the separation dimension for complete $r$-uniform hypergraphs. Before we close this section we give a lower bound on the separation dimension of $K_n^{1/2}$ using Erd\H{o}s-Szekeres Theorem and a lower bound on the poset dimension of canonical interval orders.


\subsection{Uniform bipartitions}
\label{sectionLowerBoundBipartition}

\begin{theorem}
\label{theoremBoxliLowerBound}
For a graph $G$, let $V_1, V_2 \subsetneq V(G)$ such that $V_1 \cap V_2 = \emptyset$. If  there exists an edge between every $s_1$-subset of $V_1$ and every $s_2$-subset of $V_2$, then $\boxli(G) \geq \min \left\{ \log \frac{|V_1|}{s_1}, \log \frac{|V_2|}{s_2} \right\}$.  
\end{theorem}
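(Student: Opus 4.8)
The plan is to argue directly with a smallest pairwise suitable family $\F = \{\sigma_1, \ldots, \sigma_k\}$, where $k = \boxli(G)$, and to show that if $k$ were strictly smaller than $\min\{\log(|V_1|/s_1), \log(|V_2|/s_2)\}$ then one could exhibit two disjoint edges of $G$ that no permutation in $\F$ separates, contradicting suitability. The whole argument rests on a simple geometric observation: if $A \subseteq V_1$ and $B \subseteq V_2$ are such that, in a permutation $\sigma$, every vertex of $A$ precedes every vertex of $B$ (i.e.\ $A \prec_\sigma B$), then for any two edges $\{a,b\},\{a',b'\}$ with $a,a' \in A$ and $b, b' \in B$ the spanning intervals $[\sigma(a), \sigma(b)]$ and $[\sigma(a'), \sigma(b')]$ both contain the $\sigma$-position of whichever of $a, a'$ is later, hence overlap, so $\sigma$ fails to separate the two edges. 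The same holds if $B \prec_\sigma A$.

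First I would isolate a halving lemma: given finite sets $A, B$ linearly ordered by a permutation $\sigma$, there exist $A' \subseteq A$ and $B' \subseteq B$ with $|A'| \ge |A|/2$ and $|B'| \ge |B|/2$ such that either $A' \prec_\sigma B'$ or $B' \prec_\sigma A'$. To prove this I would sweep a cut through the $\sigma$-order of $A \cup B$, tracking $\alpha(c)$ and $\beta(c)$, the numbers of $A$- and $B$-elements lying left of the cut. Taking the first cut $c^*$ at which $\alpha$ reaches $\lceil |A|/2\rceil$, I would branch on $\beta(c^*)$: if it is at most $|B|/2$, keep the left $\lceil |A|/2 \rceil$ elements of $A$ and the right part of $B$ (giving $A' \prec_\sigma B'$); otherwise retreat one step (where $\beta$ is unchanged but $\alpha$ has dropped below $|A|/2$) and keep the left part of $B$ and the right part of $A$ (giving $B' \prec_\sigma A'$). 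In both branches each kept set retains at least half its elements.

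With the lemma in hand I would process the permutations one at a time, starting from $A = V_1$, $B = V_2$, and at step $i$ apply the lemma to the current $A, B$ with $\sigma = \sigma_i$, obtaining refined sets in which $A$ and $B$ form one block entirely before the other under $\sigma_i$; since later refinements only delete elements, this block property is preserved for the final sets with respect to every $\sigma_i$. After all $k$ steps the surviving sets satisfy $|A| \ge |V_1|/2^k$ and $|B| \ge |V_2|/2^k$. Assuming $k < \min\{\log(|V_1|/s_1), \log(|V_2|/s_2)\}$ forces $|A| > s_1$ and $|B| > s_2$, so by the hypothesis there is an edge $\{a,b\}$ between $A$ and $B$; deleting $a$ and $b$ still leaves sets of sizes at least $s_1$ and $s_2$, so a second, vertex-disjoint edge $\{a', b'\}$ exists. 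By the opening observation neither of these two disjoint edges is separated by any $\sigma_i$, which is the desired contradiction and yields $\boxli(G) \ge \min\{\log(|V_1|/s_1), \log(|V_2|/s_2)\}$.

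I expect the halving lemma to be the only real obstacle: it is where the factor $2$ per permutation (hence the logarithm) comes from, and the subtlety is that we must retain half of $A$ and half of $B$ \emph{simultaneously} while forcing one block to precede the other — exactly what the threshold/branching sweep delivers. Everything else is bookkeeping, together with the elementary fact that a block-before-block arrangement makes all connecting intervals pairwise overlap.
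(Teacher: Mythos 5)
Your proposal is correct and follows essentially the same route as the paper's proof: a per-permutation halving claim established by scanning the order (the paper scans until the first of the two half-thresholds is hit, your sweep-and-retreat argument is the same idea), iterated over all $k$ permutations to produce blocks $T_1 \subseteq V_1$, $T_2 \subseteq V_2$ of sizes at least $|V_1|/2^k$ and $|V_2|/2^k$ that are separated as blocks by every permutation, followed by the observation that two disjoint edges between such blocks can never be separated, contradicting pairwise suitability when $|T_1| > s_1$ and $|T_2| > s_2$. The only cosmetic difference is that you state the interval-overlap fact explicitly as an opening observation, whereas the paper asserts it in passing.
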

\begin{proof}
Let $\F$ be a family of permutations of $V(G)$ that is pairwise suitable for $G$. Let $r = |\F|$. We claim that, for any $\sigma \in \F$, there always exists an $S_1 \subseteq V_1$ and an $S_2 \subseteq V_2$ such that $|S_1| \geq \ceil{ |V_1|/2 }, |S_2| \geq \ceil{ |V_2|/2 }$ and $S_1 \prec_{\sigma} S_2$ or $S_2 \prec_{\sigma} S_1$. To see this, scan $V(G)$ in the order of $\sigma$ till we see $\ceil{|V_1|/2 }$ elements from $V_1$ or $\ceil{|V_2|/2}$ elements of $V_2$, which ever happens earlier. In the former case the first $\ceil {|V_1|/2 }$ elements of $V_1$ precede at  least $\ceil{ |V_2|/2 }$ elements of $V_2$ and in the latter case the first $\ceil{|V_2|/2}$ elements of $V_2$ precede at least $\ceil{|V_1|/2}$ elements of $V_1$. Extending this claim recursively to all permutations in $\F$, we see that there always exist a $T_1 \subseteq V_1$ and a $T_2 \subseteq V_2$ such that $|T_1| \geq |V_1|/2^r, |T_2| \geq |V_2|/2^r$ and $\forall \sigma \in \F$, either $T_1 \prec_{\sigma} T_2$ or $T_2 \prec_{\sigma} T_1$. We now claim that either $|T_1| \leq s_1$ or  $|T_2| \leq s_2$. Suppose, for contradiction, $|T_1| \geq s_1+1$ and  $|T_2| \geq s_2+1$. Then by the statement of the theorem, there exists an edge $e = \{v_1, v_2\}$ of $G$ such that $v_1 \in T_1$ and $v_2 \in T_2$ and a second edge $f$ between $T_1 \setminus \{v_1\}$ and $T_2 \setminus \{v_2\}$. Since $T_1$ and $T_2$ are separated in every permutation of $\F$, no permutation in $\F$ separates the disjoint edges $e$ and $f$ between $T_1$ and $T_2$. This contradicts the fact that $\F$ is a pairwise suitable family for $G$. Hence, either $|V_1| / 2^r \leq |T_1| \leq s_1$ or $|V_2|/2^r \leq |T_2| \leq s_2$ or both. That is, $r \geq \min \left\{ \log \frac{|V_1|}{s_1}, \log \frac{|V_2|}{s_2} \right\}$. 
\end{proof}

The next two corollaries are immediate. 

\begin{corollary}
\label{corollaryCompleteBipartiteLowerBound}
For a complete bipartite graph $K_{m,n}$ with $m \leq n$, $\boxli(K_{m,n}) \geq \log(m)$. 
\end{corollary}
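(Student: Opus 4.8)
Prove that for the complete bipartite graph $K_{m,n}$ with $m \le n$, we have $\boxli(K_{m,n}) \ge \log m$.

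\textbf{Approach.} This is a direct specialisation of Theorem \ref{theoremBoxliLowerBound}, so the plan is simply to choose the right witnesses. I would take $V_1$ and $V_2$ to be the two sides of the bipartition of $K_{m,n}$, so that $|V_1| = m$ and $|V_2| = n$. These are disjoint by definition of a bipartition, and each is a proper subset of $V(G)$ because the other part is nonempty (assuming $m, n \ge 1$); this validates the hypothesis $V_1, V_2 \subsetneq V(G)$, $V_1 \cap V_2 = \emptyset$ of the theorem.

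\textbf{Key step.} Since $K_{m,n}$ is \emph{complete} bipartite, every vertex of $V_1$ is adjacent to every vertex of $V_2$. In particular, for any single vertex $u \in V_1$ and any single vertex $w \in V_2$ the edge $\{u,w\}$ is present, so there is an edge between every $1$-subset of $V_1$ and every $1$-subset of $V_2$. Thus the hypothesis of Theorem \ref{theoremBoxliLowerBound} is met with the choice $s_1 = s_2 = 1$.

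\textbf{Conclusion.} Feeding these values into the conclusion of Theorem \ref{theoremBoxliLowerBound} gives
$$
\boxli(K_{m,n}) \;\ge\; \min\left\{ \log \frac{|V_1|}{s_1}, \log \frac{|V_2|}{s_2} \right\}
	\;=\; \min\{\log m, \log n\} \;=\; \log m,
$$
where the last equality uses $m \le n$. I do not anticipate any genuine obstacle here: the entire content lies in recognising that completeness lets us take $s_1 = s_2 = 1$, after which the bound is a one-line substitution. The only point deserving a moment's care is confirming that both parts are proper (nonempty) so that $V_1, V_2 \subsetneq V(G)$, which holds in any nondegenerate $K_{m,n}$.
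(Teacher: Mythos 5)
Your proof is correct and matches the paper exactly: the paper states this corollary as an immediate consequence of Theorem \ref{theoremBoxliLowerBound}, and the intended specialisation is precisely yours, taking $V_1, V_2$ to be the two sides of the bipartition with $s_1 = s_2 = 1$ so that completeness supplies the required edges. Nothing is missing.
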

\begin{corollary}
\label{corollaryBoxliOmega}
For a graph $G$, 
$$\boxli(G) \geq \log \floor{\frac{\omega}{2}},$$
where $\omega$ is the size of a largest clique in $G$.
\end{corollary}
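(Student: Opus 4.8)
The plan is to apply Theorem \ref{theoremBoxliLowerBound} directly to a largest clique of $G$, taking the trivial forbidden-subset sizes $s_1 = s_2 = 1$. First I would fix a clique $K$ of $G$ on $\omega$ vertices (we may assume $\omega \ge 2$, since for $\omega \le 1$ the claimed bound $\log \floor{\omega/2}$ is vacuous). Choose two disjoint subsets $V_1, V_2 \subseteq K$, each of cardinality $\floor{\omega/2}$; when $\omega$ is odd, one vertex of $K$ is simply discarded. Because $K$ is a clique, every vertex of $V_1$ is adjacent to every vertex of $V_2$, so in particular there is an edge between every $1$-subset of $V_1$ and every $1$-subset of $V_2$. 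Hence the hypothesis of Theorem \ref{theoremBoxliLowerBound} is met with $s_1 = s_2 = 1$ and $|V_1| = |V_2| = \floor{\omega/2}$.

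The one point worth a moment's care is the theorem's requirement that $V_1, V_2 \subsetneq V(G)$ be \emph{proper} disjoint subsets. Disjointness holds by construction, and since $|V_1| = |V_2| = \floor{\omega/2} < \omega \le |V(G)|$ for $\omega \ge 2$, both are indeed proper subsets. Feeding these choices into Theorem \ref{theoremBoxliLowerBound} gives
$$
\boxli(G) \ge \min\left\{ \log \frac{|V_1|}{1}, \log \frac{|V_2|}{1} \right\} = \log \floor{\tfrac{\omega}{2}},
$$
which is exactly the claimed bound.

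I do not anticipate any genuine obstacle: all the real work is front-loaded into Theorem \ref{theoremBoxliLowerBound}, and this corollary is merely its specialisation to a complete subgraph with singleton forbidden subsets. (The companion statement for $K_{m,n}$ in Corollary \ref{corollaryCompleteBipartiteLowerBound} follows the same way, taking $V_1, V_2$ to be the two sides of the bipartition and again $s_1 = s_2 = 1$.)
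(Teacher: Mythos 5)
Your proposal is correct and is exactly the argument the paper intends: the paper states this corollary as an immediate consequence of Theorem \ref{theoremBoxliLowerBound}, obtained precisely by splitting a maximum clique into two disjoint halves of size $\floor{\omega/2}$ and applying the theorem with $s_1 = s_2 = 1$. Your additional checks (properness of $V_1, V_2$ and the vacuous case $\omega \le 1$) are fine and do not change the substance.
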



\subsection{Random graphs}
\label{sectionRandomGraphs}


\begin{definition}[Erd\H{o}s-R\'{e}nyi model]
$\G(n,p)$, $n \in \N$ and $0 \leq p \leq 1$, is the discrete probability space of all simple undirected graphs $G$ on $n$ vertices with each pair of vertices of $G$ being joined by an edge with a probability $p$ independent of the choice for every other pair of vertices. 
\end{definition}

\begin{definition}
A property $P$ is said to hold for $\G(n,p)$ {\em asymptotically almost surely (a.a.s)} if the probability that $P$ holds for $G \in \G(n,p)$ tends to $1$ as $n$ tends to $\infty$.
\end{definition}

%
%
%


\begin{theorem}
\label{theoremBoxliLowerBoundRandom}
For $G \in \G(n,p(n))$ 
$$\boxli(G) \geq \log(np(n)) - \log\log(np(n)) - 2.5 \mbox{ a.a.s}.$$
\end{theorem}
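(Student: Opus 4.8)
The plan is to locate, inside a typical $G \in \G(n,p)$, a pair of disjoint vertex sets witnessing the hypothesis of Theorem~\ref{theoremBoxliLowerBound}, and then simply read off the bound. Throughout I may assume $np = np(n) \to \infty$, since otherwise $\log(np) - \log\log(np) - 2.5$ is bounded by a constant (indeed eventually non-positive) and the inequality is vacuous. First I would fix two \emph{disjoint} subsets $V_1, V_2 \subseteq V(G)$, each of size $\floor{n/2}$, and set
$$ s = \floor{ \frac{2\sqrt{2}\,\log(np)}{p} }. $$
The aim is to show that a.a.s.\ there is an edge between every $s$-subset of $V_1$ and every $s$-subset of $V_2$. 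Once this holds, Theorem~\ref{theoremBoxliLowerBound} applied with $s_1 = s_2 = s$ gives $\boxli(G) \geq \log\!\big(\floor{n/2}/s\big)$, and the choice of $s$ is engineered precisely so that this matches the target: since $s \leq 2\sqrt2\,\log(np)/p$ we get $\floor{n/2}/s \geq np/(4\sqrt2\,\log(np))$, whence $\log\!\big(\floor{n/2}/s\big) \geq \log(np) - \log\log(np) - \log(4\sqrt2)$, and $\log(4\sqrt2) = 2.5$ exactly.

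The heart of the matter is therefore a first-moment estimate on the bad event. For a fixed pair $(A,B)$ with $A \subseteq V_1$, $B \subseteq V_2$, $|A| = |B| = s$, the $s^2$ potential edges between $A$ and $B$ are present independently, so the probability that none occurs is $(1-p)^{s^2} \leq e^{-ps^2}$. Summing over all such pairs, the probability that some $s$-subset of $V_1$ is completely non-adjacent to some $s$-subset of $V_2$ is at most
$$ \binom{\floor{n/2}}{s}^{2} (1-p)^{s^2} \;\leq\; \left( \frac{en}{2s} \right)^{2s} e^{-ps^2} \;=\; \exp\!\left( s\left( 2\ln\frac{en}{2s} - ps \right) \right), $$
where I use the standard estimate $\binom{m}{s} \leq (em/s)^s$ with $m = \floor{n/2} \leq n/2$. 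The crux, and the step I expect to require the most care, is that the chosen $s$ forces the inner bracket negative. Indeed $ps \geq 2\sqrt2\,\log(np) - p = \tfrac{2\sqrt2}{\ln 2}\ln(np) - O(1) \approx 4.08\,\ln(np)$, whereas a short computation gives $\frac{en}{2s} = \Theta\!\big(np/\log(np)\big)$ and hence $2\ln\frac{en}{2s} = (2 + o(1))\ln(np)$. Thus the bracket is $-(\Omega(1))\ln(np)$, and since $s \geq 2\sqrt2\,\log(np) - 1 \to \infty$, the exponent tends to $-\infty$, so the bad event has probability $o(1)$.

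The genuinely delicate point is purely in the constants: it is essential to bound $\binom{\floor{n/2}}{s}$ by $(en/2s)^s$ rather than by the cruder $(n/2)^s$. Only the sharper estimate brings the dominant $\ln(np)$ term (instead of $\ln n$) into the competition against $ps$, and this is exactly what keeps the bracket negative across the whole range $np \to \infty$, including the regime of small $p$ where $\ln(np) \ll \ln n$. With the union bound in hand, nothing remains but the substitution already indicated, verifying that $\log\!\big(\floor{n/2}/s\big) \geq \log(np) - \log\log(np) - 2.5$ for all large $n$; combined with the a.a.s.\ existence of the uniform bipartite structure, this yields the stated lower bound asymptotically almost surely.
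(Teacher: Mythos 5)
Your argument is structurally the same as the paper's own proof: fix a balanced bipartition $V_1\uplus V_2$, pick $s$ of order $\log(np)/p$, show by a union bound with $\binom{m}{s}\leq(em/s)^s$ that a.a.s.\ every $s$-subset of $V_1$ and every $s$-subset of $V_2$ span an edge, and invoke Theorem~\ref{theoremBoxliLowerBound}. The genuine gap is in your opening reduction. You discard the case $np(n)\not\to\infty$ on the grounds that the right-hand side is then ``bounded by a constant (indeed eventually non-positive)'', making the claim vacuous. That is false: $\log x-\log\log x-2.5>0$ for every $x\geq 27$ (and the negation of $np\to\infty$ does not even imply $np$ is bounded, only that it has a bounded subsequence). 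For example, with $p(n)=100/n$ the right-hand side equals $\log 100-\log\log 100-2.5\approx 1.41$ for all $n$, so the theorem makes the non-vacuous assertion that $\boxli(G)\geq 2$ a.a.s., and your proof does not establish it. Nor does your main computation silently extend to this regime, because it uses $\ln(np)\to\infty$ essentially: you deduce that the exponent diverges from ``$s\geq 2\sqrt{2}\log(np)-1\to\infty$'' (when $np$ is bounded, $s\to\infty$ only because $p\to 0$), and the simplification $2\ln\frac{en}{2s}=(2+o(1))\ln(np)$ buries the additive $-2\ln\ln(np)+O(1)$ terms inside $o(1)\cdot\ln(np)$, which is meaningful only when $\ln(np)\to\infty$. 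The paper avoids this: it discards only the range $np\leq e^{e/4}$, verifies $s(n)=2\ln(np)/p\to\infty$ for every remaining sequence (bounded $np$ included), and bounds the failure probability by $\exp\left(-2s(n)\left(\ln\ln(np)-\ln\frac{e}{4}\right)\right)$, which tends to zero whenever $np$ is bounded below by any constant exceeding $e^{e/4}$. Your bracket $2\ln\frac{en}{2s}-ps$ is in fact negative throughout the non-vacuous range $np\geq 27$, so your argument is repairable along exactly the paper's lines, but the verification must be carried out with explicit constants rather than $o(1)$ asymptotics.

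There is also a smaller, but real, slip in the constants, precisely because you engineered $s$ with zero slack. The step ``$\floor{n/2}/s\geq np/(4\sqrt{2}\log(np))$'' requires $\floor{n/2}\geq n/2$, which fails for odd $n$; what you actually obtain is $\log((n-1)p)-\log\log(np)-2.5$, short of the target by $\log\frac{n}{n-1}>0$. Since $\log(4\sqrt{2})=2.5$ exactly, there is nothing left to absorb this $\order{1/n}$ loss, so the stated inequality is not proved as written (integrality of $\boxli(G)$ does not rescue it either, since an integer may lie in the gap). This is exactly why the paper's choice $s(n)=2\ln(np)/p$ is comfortable: it naturally yields the constant $\approx 1.47$, and the difference up to $2.5$ pays for all floor effects. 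Taking $s=\floor{c\log(np)/p}$ for any fixed $c$ with $2\ln 2<c<2\sqrt{2}$ (for instance $c=2$) repairs your version.
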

\begin{proof}

If $np(n) \leq e^{e/4}$, then  $\log(np(n)) - \log\log(np(n)) - 2.5 \leq 0$, and hence the statement is trivially true. So we can assume that $p(n) > e^{e/4} / n$. 

Let $s(n) = 2 \ln (np(n)) / p(n)$. Since $p(n) > e^{e/4} / n$ by assumption, $\ln (np(n)) > e/4$ and hence if $\lim_{n \tends \infty} p(n) = 0$, we get $\lim_{n \tends \infty} s(n) = \infty$. Otherwise, that is when $\liminf_{n \tends \infty} p(n) > 0$, we have $s(n) \geq 2 \ln(np(n)) / 1$ which tends to $\infty$ as $n \tends \infty$. Hence in every case $\lim_{n \tends \infty} s(n) = \infty$.

Let $V(G) = V_1 \uplus V_2$ be a balanced partition of $V(G)$, i.e., $V_1 \cap V_2 = \emptyset$ and $|V_1|, |V_2| \geq \floor{ n/2  }$. $S_1 \subseteq V_1$ and $S_2 \subseteq V_2$ be such that $|S_1| = |S_2| = s(n)$. The probability that there is no edge in $G$ between $S_1$ and $S_2$ is $(1-p(n))^{s(n)^2} \leq \exp(-p(n)s(n)^2)$. Hence the probability $q(n)$ that there exists an $s(n)$-sized set from $V_1$ and one $s(n)$-sized set from $V_2$ with no edge between them is bounded above by ${n/2 \choose s(n)}^2 \exp(-p(n)s(n)^2)$. Hence using the bound ${n \choose k} \leq (ne/k)^k$, we get
\begin{eqnarray*}
q(n) 	& \leq 	& 	\left( \frac{ne}{2s(n)} \right)^{2s(n)} \exp( -p(n)s(n)^2) \\ 
		& = 	&	\exp \left(2s(n) \ln \left( \frac{ne}{2s(n)} \right) - p(n)s(n)^2 \right) \\
		& = 	&	\exp \left(s(n) \left( 2\ln \left( \frac{np(n)e}{4\ln(np(n))}\right)  - 2 \ln(np(n)) \right) \right) \\
		& = 	&	\exp \left(s(n) \left( 2\ln \frac{e}{4}  - 2 \ln\ln(np(n)) \right) \right) \\
		& = 	&	\exp \left(-2s(n)\left(\ln\ln(np(n)) - \ln \frac{e}{4} \right) \right) \\
\end{eqnarray*}

Since $p(n) > e^{e/4} / n$, $\ln\ln(np(n)) > \ln(e/4)$ and since $\lim_{n \tends \infty} s(n) = \infty$, we conclude that  $\lim_{n \tends \infty} q(n) = 0$.

With probability $1 - q(n)$, every pair of subsets from $V_1 \times V_2$ each of size $s(n)$ has at least one edge between them. So by Theorem \ref{theoremBoxliLowerBound}, $\boxli(G) \geq \log \floor{n/2s(n)} \geq \log(np(n)) - \log\log (np(n)) - 2.5$ with probability $1 - q(n)$. Hence the theorem.
\end{proof}

Note that the expected average degree of a graph in $\G(n,p)$ is $\mathbb{E}_p[\bar{d}] = (n-1)p$. And hence the above bound can be written as $\log \mathbb{E}_p[\bar{d}] - \log\log \mathbb{E}_p[\bar{d}] - 2.5$.


\subsection{Hypergraphs}
\label{sectionLowerBoundHypergraphs}

Now we illustrate one method of extending the above lower bounding technique from graphs to hypergraphs.
Let $K_n^r$ denote the complete $r$-uniform hypergraph on $n$ vertices. We show that the upper bound of $\order{4^r \sqrt{r} \log n}$ obtained for $K_n^r$ from Theorem \ref{theoremHypergraphSizeUpperbound} is tight up to a factor of $r$. The lower bound argument below is motivated by an argument used by Radhakrishnan to prove a lower bound on the size of a family of scrambling permutations \cite{radhakrishnan2003note}. 

\begin{theorem}
\label{theoremHypergraphSizeLowerbound}
Let $K_n^r$ denote the complete $r$-uniform hypergraph on $n$ vertices with $r > 2$. Then 
$$ c_1 \frac{4^r}{\sqrt{r-2}} \log n \leq \boxli(K_n^r) \leq c_2 4^r \sqrt{r} \log n,$$ for $n$ sufficiently larger than $r$ and where $c_1 = \frac{1}{2^7}$ and  $c_2 =  \frac{e\ln2}{\pi\sqrt{2}} < \frac{1}{2}$.
\end{theorem}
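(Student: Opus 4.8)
The upper bound is immediate: $K_n^r$ is a rank-$r$ hypergraph on $n$ vertices, so Theorem \ref{theoremHypergraphSizeUpperbound} gives $\boxli(K_n^r) \leq c_2 4^r\sqrt r\log n$ with $c_2 = \frac{e\ln 2}{\pi\sqrt 2}$. All the work is in the lower bound, and the plan is to carve out of \emph{any} pairwise suitable family for $K_n^r$ many disjoint (hence orthogonal) subfamilies, each of which separates a large complete graph, and then invoke the clique lower bound of Corollary \ref{corollaryBoxliOmega}. Fix a pairwise suitable family $\F$ for $K_n^r$ with $|\F| = \boxli(K_n^r) =: d$. Single out a set $W \subseteq [n]$ of $2r-4$ ``padding'' vertices and put $U = [n]\setminus W$, so $|U| = n-2r+4$; the aim is to recover separating families for the complete graph $K_U \cong K_{n-2r+4}$.

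Each $\sigma \in \F$ linearly orders $W$, and since $|W| = 2r-4$ I split $W$ into the $r-2$ elements $A_\sigma$ smallest under $\sigma$ and the $r-2$ elements $B_\sigma$ largest under $\sigma$. This associates to $\sigma$ the \emph{unordered} bipartition $\{A_\sigma, B_\sigma\}$ of $W$ into two $(r-2)$-subsets, and grouping the permutations accordingly partitions $\F$ into classes $\F_{\{A,B\}}$, one for each of the $\frac12\binom{2r-4}{r-2}$ such bipartitions. The key step is to show that every class $\F_{\{A,B\}}$, read off on $U$, is itself pairwise suitable for $K_U$. Given two disjoint edges $\{a,b\},\{c,d\}$ of $K_U$, pad them into the disjoint $r$-sets $e = \{a,b\}\cup A$ and $f = \{c,d\}\cup B$ of $K_n^r$. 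Some $\sigma \in \F$ separates $e$ and $f$; say $e \prec_\sigma f$ (the reverse case is symmetric). Then in particular $A \prec_\sigma B$, so the first $r-2$ elements of $W$ under $\sigma$ are exactly $A$, forcing $\{A_\sigma,B_\sigma\} = \{A,B\}$, i.e. $\sigma \in \F_{\{A,B\}}$; and since $\{a,b\}\subseteq e$ and $\{c,d\}\subseteq f$, we get $\{a,b\}\prec_\sigma\{c,d\}$. Thus the separating permutation always lands in the class indexed by the very bipartition used for padding, so $\F_{\{A,B\}}$ separates $\{a,b\}$ from $\{c,d\}$. (Working with unordered bipartitions is essential: with an ordered first/second split, the separating permutation could fall into the reversed class.)

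Each class $\F_{\{A,B\}}$ being pairwise suitable for $K_{n-2r+4}$ gives $|\F_{\{A,B\}}| \geq \boxli(K_{n-2r+4}) \geq \log\floor{(n-2r+4)/2}$ by Corollary \ref{corollaryBoxliOmega}. Since the classes partition $\F$ (equivalently, they occupy pairwise disjoint, hence orthogonal, sets of coordinates of the embedding), their sizes sum to $d$, so
\[
d \;\geq\; \tfrac12\binom{2r-4}{r-2}\,\log\floor{\tfrac{n-2r+4}{2}}.
\]
The rest is routine estimation: the central binomial bound $\binom{2m}{m}\geq 4^m/(2\sqrt m)$ with $m = r-2$ gives $\frac12\binom{2r-4}{r-2} \geq 4^r/(64\sqrt{r-2})$, and for $n$ sufficiently larger than $r$ one has $\log\floor{(n-2r+4)/2} \geq \frac12\log n$; together these yield $\boxli(K_n^r) \geq \frac{1}{2^7}\frac{4^r}{\sqrt{r-2}}\log n$.

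I expect the main obstacle to be precisely the key step of the second paragraph: establishing that the permutation separating the padded $r$-sets is \emph{forced} into the class matching the padding bipartition (because $A$ entirely preceding $B$ pins down the first half of $W$), and recognising that it is the unordered bipartitions — not the ordered splits suggested by a naive count of $\binom{2r-4}{r-2}$ — that make every class pairwise suitable and supply the extra factor of $2$ needed to reach $c_1 = 1/2^7$. Once that forcing is in hand, the orthogonality bookkeeping and the Stirling-type estimate for the central binomial coefficient are entirely mechanical.
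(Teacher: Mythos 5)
Your proposal is correct and takes essentially the same approach as the paper: the paper likewise fixes a $(2r-4)$-set, partitions $\mathcal{F}$ into $\frac{1}{2}\binom{2r-4}{r-2}$ disjoint subfamilies according to which $(r-2)$-subset precedes its complement (its maximal family $\mathcal{S}$ of representatives is exactly your unordered bipartitions), pads two disjoint edges with the two halves to force the separating permutation into the matching class, and then applies Corollary \ref{corollaryBoxliOmega} together with the same central-binomial and $\log\floor{(n-2r+4)/2}\geq\frac{1}{2}\log n$ estimates to get $c_1 = 2^{-7}$. There is nothing to fix.
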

\begin{proof} The upper bound follows from Theorem \ref{theoremHypergraphSizeUpperbound} and so it suffices to prove the lower bound.

\def\S{\mathcal{S}}

Let $\F$ be a family of pairwise suitable permutations for $K_n^r$. Let $\S$ be a maximal family of $(r-2)$-sized subsets of $[2r-4]$ such that if $S \in \S$, then $[2r-4] \setminus S \notin \S$. Hence $|\S| = \frac{1}{2} {2r-4 \choose r-2} \geq 2^{-6} 4^r / \sqrt{r-2}$ (using the fact that $\sqrt{k} {2k \choose k} \geq 2^{2k-1}$). Notice that for any permutation $\sigma \in \F$, if $S \in \S$ and $[2r-4] \setminus S$ are separated in $\sigma$ then no other $S' \in \S$ and $[2r-4] \setminus S'$ are separated in $\sigma$. Hence we partition $\F$ into $|\S|$ (disjoint) sub-families $\{\F_S\}_{S \in \S}$ such that $\sigma \in \F_S$ if and only if $\sigma$ separates $S$ and $[2r-4] \setminus S$. We claim that each $\F_S$ is pairwise suitable for the complete graph on the vertex set $\{2r-3, \ldots, n\}$, i.e, for any distinct $a, b, c, d \in \{2r-3, \ldots, n\}$ there exists some $\sigma \in \F_S$ which separates $\{a, b\}$ from  $\{c, d\}$. This is because the permutation $\sigma \in \F$ which separates the $r$-sets $S \cup \{a,b\}$ from $([2r-4] \setminus S) \cup \{c, d\}$ lies in $\F_S$. Hence by Corollary \ref{corollaryBoxliOmega}, we have $|\F_S| \geq \log \floor{(n - 2r +4)/2}$. Since $\F = \biguplus_{S \in \S} \F_S$, we have $|\F| \geq |\S||\F_S| \geq 2^{-6} \frac{4^r}{\sqrt{r-2}} \log \floor{ (n-2r+4)/2 }$ which is at least $2^{-7} \frac{4^r}{\sqrt{r-2}} \log n$ for $n$ sufficiently larger than $r$.
\end{proof}


\subsection{Fully subdivided clique} 
\label{sectionSubdividedClique}

It easily follows from Corollary \ref{corollarySubdivisionChromaticNumber} that $\boxli(K_n^{1/2}) \in O(\log \log n)$. In this section we prove that $\boxli(K_n^{1/2}) \geq \frac{1}{2} \log \log (n-1)$, showing the near tightness of that upper bound. We give a brief outline of the proof below. (Definitions of the new terms are given before the formal proof.) 

First, we use Erd\H{o}s-Szekeres Theorem \cite{ErdosSzekeres} to argue that for any family $\F$ of permutations of $V(K_n^{1/2})$, with $|\F| < \frac{1}{2} \log \log n$, a subset $V'$ of original vertices of $K_n^{1/2}$, with $n' = |V'| \approx 2^{\sqrt{\log n}}$, is ordered essentially in the same way by every permutation in $\F$. Since the ordering of the vertices in $V'$ are fixed, the only way for $\F$ to realise pairwise suitability among the edges in the subdivided paths between vertices in $V'$ is to find suitable positions for the new vertices (those introduced by subdivisions) inside the fixed order of $V'$. We then show that this amounts to constructing a realiser for the canonical open interval order $(C_{n'}, \lhd)$ and hence $|\F|$, in this case, is lower bounded by the poset dimension of $(C_{n'}, \lhd)$ which is known to be at least $\log \log (n'-1) = \frac{1}{2} \log \log (n-1)$. 

\begin{definition}[Canonical open interval order]
\label{definitionCanonicalOpenInterval}
For a positive integer $n$, let $C_n = \{(a,b) : a, b \in [n], a < b \}$ be the collection of all the ${n \choose 2}$ open intervals which have their endpoints  in $[n]$. Then $(C_n,\lhd)$, the interval order corresponding to the collection $C_n$, is called the {\em canonical open interval order}. 
\end{definition} 

Usually the canonical interval order is defined over closed intervals.  For a positive integer $n$, let $I_{n} = \{[a,b]: a, b \in [n], a \leq b \}$ be the collection of all the ${n+1 \choose 2}$ closed intervals which have their endpoints in $[n]$. The poset $(I_n,\lhd')$, where $[i, j] \lhd' [k,l] \iff j < k$ is called the {\em canonical (closed) interval order} in literature. It is easy to see that $f: (C_n, \lhd) \into (I_{n-1}, \lhd')$, with $f((i,j)) = [i, j-1]$ is an isomorphism. It is well known that the dimension of $(I_{n-1}, \lhd')$ and hence $(C_n, \lhd)$ is at most $\log\log (n-1) + (\frac{1}{2} + o(1))\log\log\log (n-1)$. We state the lower bound below for later reference.

\begin{theorem}[F\"{u}redi, Hajnal, R\"{o}dl, Trotter \cite{furedi1991interval}]
\label{theoremIdimCanonicalLowerBound}
$$dim(C_n) \geq \log\log(n-1),$$
\end{theorem}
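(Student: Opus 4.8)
The plan is to reduce the statement to the well-known chromatic behaviour of shift graphs. First I would pass from the open interval order $(C_n,\lhd)$ to the closed canonical interval order $(I_{n-1},\lhd')$ via the isomorphism $f((i,j)) = [i,j-1]$ noted just above the statement, so that it suffices to prove $\dim(I_m) \geq \log\log m$ for $m = n-1$ (logarithms to base $2$). Fix an optimal realiser $\{L_1,\ldots,L_d\}$ of $I_m$, where $d = \dim(I_m)$, and recall the defining property: for every pair of incomparable intervals $X,Y$ there is a linear extension in the realiser with $X \prec Y$ and another with $Y \prec X$.

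The heart of the argument is to convert this realiser into a proper colouring of the \emph{double shift graph} $DS_m$, whose vertices are the triples $i<j<k$ in $[m]$ and whose edges join $(i,j,k)$ to $(j,k,l)$ whenever $i<j<k<l$. For a triple $i<j<k$, the closed intervals $[i,j]$ and $[j,k]$ overlap only at $j$, hence are incomparable in $I_m$ and are therefore reversed somewhere in the realiser; I define $c(i,j,k) = \min\{\, t : [j,k] \prec_{L_t} [i,j] \,\}$, which is well defined. To see that $c$ is proper, suppose $c(i,j,k) = c(j,k,l) = t$ on an edge of $DS_m$. Applying the definition to both triples gives $[j,k] \prec_{L_t} [i,j]$ and $[k,l] \prec_{L_t} [j,k]$, so $[k,l] \prec_{L_t} [i,j]$ by transitivity. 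But $j < k$ forces $[i,j] \lhd' [k,l]$ in $I_m$, hence $[i,j] \prec_{L_t} [k,l]$ in every linear extension, a contradiction. Thus $\chi(DS_m) \leq d$.

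It remains to lower bound $\chi(DS_m)$, for which I would either cite the Erd\H{o}s--Hajnal estimates on iterated shift graphs or reprove them by applying the ``set of colours'' trick twice. A proper $q$-colouring $\psi$ of $DS_m$ yields a proper $2^q$-colouring of the single shift graph $S_m$ (vertices the pairs, with $(a,b) \sim (b,c)$) by colouring $(a,b)$ with the set $\{\psi(a,b,x) : x > b\}$: on an edge $(a,b)\sim(b,c)$ the colour $\psi(a,b,c)$ lies in the set assigned to $(a,b)$ but, since $(a,b,c)$ is adjacent in $DS_m$ to every $(b,c,y)$, lies in none of the terms defining the colour of $(b,c)$, so the two colours differ. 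The same trick shows a proper $q'$-colouring $\chi$ of $S_m$ forces $m \leq 2^{q'}$, because $a \mapsto \{\chi(a,b) : b > a\}$ is injective (for $a<a'$ the value $\chi(a,a')$ lies in the image of $a$ but not of $a'$). Chaining these gives $m \leq 2^{2^d}$, i.e. $d \geq \log\log m = \log\log(n-1)$. The main obstacle, and the only genuinely delicate point, is fixing the orientation of the colouring $c$ correctly: one must record the direction $[j,k] \prec_{L_t} [i,j]$ (the later interval before the earlier one), so that transitivity along a shift edge collides with the unique forced comparability $[i,j] \lhd' [k,l]$; the reverse choice yields a consistent chain and no contradiction. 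Verifying the two set-of-colours reductions and the base estimate $\chi(S_m) = \lceil \log_2 m \rceil$ is then routine.
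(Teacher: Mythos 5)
Your proposal is correct, but a preliminary remark is in order: the paper itself contains no proof of this statement. It is quoted as a black box from F\"{u}redi, Hajnal, R\"{o}dl and Trotter \cite{furedi1991interval} and then invoked in the lower-bound arguments for $K_n^{1/2}$ and $Q_d$, so the only meaningful comparison is with the cited source — and your route is essentially that classical argument: pass to $(I_{n-1},\lhd')$, turn a realiser of size $d$ into a proper $d$-colouring of the double shift graph $DS_m$, and then force $m \leq 2^{2^{d}}$ by two applications of the set-of-colours trick. All the steps check out. You correctly identified and resolved the one delicate point, the orientation of the colouring: colouring the triple $i<j<k$ by an extension in which $[j,k] \prec_{L_t} [i,j]$ means that along a shift edge transitivity yields $[k,l] \prec_{L_t} [i,j]$, which collides with the comparability $[i,j] \lhd' [k,l]$ forced by $j<k$, whereas the opposite orientation produces a consistent chain and proves nothing. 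The two counting reductions are also sound: $\psi(a,b,c)$ lies in the set-colour of $(a,b)$ but, by properness of $\psi$ against every $(b,c,y)$, in none of the values defining the set-colour of $(b,c)$; and the map $a \mapsto \{\chi(a,b) : b>a\}$ is injective by the same one-sided argument, giving $m \leq 2^{\chi(S_m)} \leq 2^{2^{\chi(DS_m)}} \leq 2^{2^{d}}$ and hence $d \geq \log\log(n-1)$. Your closing claim that $\chi(S_m) = \ceil{\log m}$ is not needed (and its upper-bound half is not established by your argument); the displayed chain of inequalities already suffices.
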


\begin{theorem}
\label{theoremKnHalf}
Let $K_n^{1/2}$ denote the graph obtained by fully subdividing $K_n$. Then, 
$$ \frac{1}{2} \floor{\log\log(n-1)} \leq \boxli(K_n^{1/2}) \leq (1 + o(1))\log\log (n-1).$$ 
\end{theorem}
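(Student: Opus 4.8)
The upper bound is immediate. Applying Corollary~\ref{corollarySubdivisionChromaticNumber} to $G = K_n$, whose chromatic number is $n$, gives
$\boxli(K_n^{1/2}) \le \log\log(n-1) + \left(\tfrac12 + o(1)\right)\log\log\log(n-1) + 2 = (1+o(1))\log\log(n-1)$,
so only the lower bound needs work. The plan is as follows. Suppose $\F = \{\sigma_1, \ldots, \sigma_d\}$ is a pairwise suitable family for $K_n^{1/2}$ with $d = \boxli(K_n^{1/2})$; I want to show $d \gtrsim \tfrac12\log\log(n-1)$. Write $v_1, \ldots, v_n$ for the original vertices and, for $a<b$, let $u_{ab}$ be the vertex subdividing $v_av_b$, so that the path $v_a - u_{ab} - v_b$ carries a \emph{left edge} $\{v_a,u_{ab}\}$ and a \emph{right edge} $\{u_{ab},v_b\}$.

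First I would prune the original vertices so that \emph{every} permutation of $\F$ orders them identically. Starting from the natural order and processing $\sigma_1,\ldots,\sigma_d$ one at a time, each application of the Erd\H{o}s--Szekeres theorem \cite{ErdosSzekeres} extracts from the current set a subset on which the next permutation is monotone; after $d$ rounds this leaves a set $V'$ of original vertices with $n' := |V'| \ge n^{1/2^{d}}$ on which every $\sigma_i$ is monotone. Since reversing a permutation preserves which pairs of disjoint edges it separates, I may reverse the decreasing ones and assume, after relabelling, that every $\sigma_i$ orders $V'$ as $w_1 \prec w_2 \prec \cdots \prec w_{n'}$. Restricting each $\sigma_i$ to $V(K_{n'}^{1/2})$ (the subdivided clique on $V'$, a subgraph of $K_n^{1/2}$) preserves all the required separations, so by Observation~\ref{observationMonotonicity} this restricted, ``frozen'' family witnesses $\boxli(K_{n'}^{1/2}) \le d$ with all original vertices ordered identically.

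The heart of the argument is to read off a realiser of the canonical open interval order $(C_{n'}, \lhd)$ of Definition~\ref{definitionCanonicalOpenInterval} from this frozen family. Associate with each pair $a<b$ the interval $(a,b) \in C_{n'}$, and order intervals within $\sigma_i$ by the position of their subdivision vertex. The key local computation is that, since the $w$'s increase, in any $\sigma_i$ separating a left edge $\{w_a,u_{ab}\}$ from a right edge $\{u_{cd},w_d\}$ with $a<d$ the left edge must come first (the alternative forces $w_d \prec w_a$), whence $u_{ab}\prec_{\sigma_i} u_{cd}$. Applying this to two overlapping intervals $(a,b),(c,d)$ with $a<c<b<d$: separating $\{w_a,u_{ab}\}$ from $\{u_{cd},w_d\}$ forces $u_{ab}\prec u_{cd}$, while separating $\{w_c,u_{cd}\}$ from $\{u_{ab},w_b\}$ forces $u_{cd}\prec u_{ab}$, and pairwise suitability guarantees both separations occur. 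Thus every incomparable pair of $C_{n'}$ is reversed in both directions across the family, which is precisely the reversal condition for a realiser. Granting that these reversals can be organised into $d$ genuine linear extensions of $C_{n'}$, I obtain $\dim(C_{n'}) \le d$, so by Theorem~\ref{theoremIdimCanonicalLowerBound}, $d \ge \dim(C_{n'}) \ge \log\log(n'-1)$. Finally $n' \ge n^{1/2^{d}}$ gives $\log\log n' \ge \log\log n - d$, hence $d \ge \log\log n - d - o(1)$, i.e. $d \ge \tfrac12\log\log n - o(1)$; a careful tracking of the rounds and floors then yields the stated $\tfrac12\floor{\log\log(n-1)}$.

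The main obstacle is the step left implicit above: upgrading the forced reversals into a bona fide realiser of $C_{n'}$. The induced order of subdivision vertices in a \emph{single} permutation need not respect the comparabilities of $C_{n'}$, since a subdivision vertex is not forced to lie between its two neighbours; hence one cannot simply declare each induced order a linear extension. The delicate point is to verify that the reversals dictated by the left/right-edge separations are mutually consistent with the comparabilities $(a,b)\lhd(c,d)$ of $C_{n'}$, so that each permutation can be completed to a linear extension without sacrificing the reversal it was responsible for. The only other sensitive issue is quantitative, namely propagating the $n \mapsto n^{1/2}$ loss through the Erd\H{o}s--Szekeres rounds precisely enough to reach the constant $\tfrac12$ and the floor, rather than a weaker bound.
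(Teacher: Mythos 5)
Your overall architecture is the same as the paper's: Erd\H{o}s--Szekeres pruning to freeze the order of the original vertices (with the same $n \mapsto n^{1/2^d}$ loss and the same bookkeeping at the end), the observation that separating a left edge $\{w_a,u_{ab}\}$ from a right edge $\{u_{cd},w_d\}$ forces $u_{ab} \prec_\sigma u_{cd}$, the conclusion that every overlapping pair of intervals is reversed in both directions across the family (the paper's Claim \ref{claim1KnHalf}), the reduction to a realiser of the canonical open interval order, and the F\"{u}redi--Hajnal--R\"{o}dl--Trotter bound. However, the step you explicitly defer --- ``granting that these reversals can be organised into $d$ genuine linear extensions of $C_{n'}$'' --- is a genuine gap and not a routine verification; it is precisely where the paper does additional work. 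As you correctly diagnose, nothing in the frozen family forces $u_{ij}$ to lie between $w_i$ and $w_j$ in a given permutation, so the raw order of subdivision vertices in $\sigma$ can violate comparabilities $(i,j) \lhd (k,l)$ of $C_{n'}$, and the forced reversals cannot simply be read off as realiser orders.

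The paper closes this gap by \emph{modifying} the family before reading anything off: for every permutation $\sigma$ and every pair $i<j$, if $u_{ij}$ lies outside the interval between $v_i$ and $v_j$ in $\sigma$, it is moved to the position immediately preceding $v_j$ (when $v_i \prec_\sigma v_j \prec_\sigma u_{ij}$) or immediately succeeding $v_i$ (when $u_{ij} \prec_\sigma v_i \prec_\sigma v_j$), and one checks that such a move never destroys a separation of two disjoint edges, so pairwise suitability is preserved. After this normalisation (Property \ref{property2KnHalf} in the paper's proof), every comparability is respected automatically, since $(i,j) \lhd (k,l)$ yields $u_{ij} \prec_\sigma v_j \preceq_\sigma v_k \prec_\sigma u_{kl}$; hence each induced order on subdivision vertices is a bona fide linear extension, and your reversal argument then makes the family a realiser, giving $|\mathcal{F}| \geq \idim((C_{n'},\lhd))$. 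Your alternative suggestion --- showing the reversals demanded of a single permutation are consistent with $\lhd$ and hence jointly extendable --- may well be salvageable, because a demanded separation $\{w_a,u_{ab}\} \prec_\sigma \{u_{cd},w_d\}$ carries positional information beyond $u_{ab} \prec_\sigma u_{cd}$ (e.g.\ $u_{ab} \prec_\sigma w_d$ and $w_a \prec_\sigma u_{cd}$) that appears to block cycles with $\lhd$; but that argument is not automatic, and you have not supplied it. Without either this or the paper's massaging step, the lower bound does not go through.
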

\begin{proof}
The upper bound follows from Corollary \ref{corollarySubdivisionChromaticNumber}. So it suffices to show the lower bound. 

Let $v_1, \ldots , v_n$ denote the \textit{original vertices} (the vertices of degree $n-1$) in $K_n^{1/2}$ and let $u_{ij}$, $i, j \in [n], i < j$, denote the new vertex of degree $2$ introduced when the edge $\{i,j\}$ of $K_n$ was subdivided. Let $\F$ be a family of permutations that is pairwise suitable for $K_n^{1/2}$ such that $|\F| = r = \boxli(K_n^{1/2})$. For convenience, let us assume that $n$ is exactly one more than a power of power of $2$, i.e., $\log\log (n-1) \in \N$. The floor in the lower bound gives the necessary correction otherwise when we bring $n$ down to the largest such number below $n$. Let $p=(n-1)^{1/2^r} + 1$. 

By Erd\H{o}s-Szekeres Theorem \cite{ErdosSzekeres},  we know that if $\tau$ and $\tau'$ are two permutations of $[n^2 + 1]$, then there exists some $X \subseteq [n^2 + 1]$ with $|X|=n+1$ such that the permutations $\tau$ and $\tau'$ when restricted to $X$ are the same or reverse of each other. By repetitive application of this argument, we can see that there exists a set $X$ of $p$ original vertices of $K_n^{1/2}$ such that, for each $\sigma, \sigma' \in \F$, the permutation of $X$ obtained by restricting $\sigma$ to $X$ is the same or reverse of the permutation obtained by restricting $\sigma'$ to $X$. Without loss of generality, let $X = \{v_1, \ldots , v_p\}$ such that, for each $\sigma \in \F$, either $v_1 \prec_{\sigma} \cdots \prec_{\sigma} v_p$ or $v_p \prec_{\sigma} \cdots \prec_{\sigma} v_1$. Now we ``massage'' $\F$ to give it two nice properties without changing its cardinality or sacrificing its pairwise suitability for $K_n^{1/2}$.

Note that if a family of permutations is pairwise suitable for a graph then the family retains this property even if any of the permutations in the family is reversed. Hence we can assume the following property without loss of generality.

\setcounter{property}{0}
\begin{property}
\label{property1KnHalf}
$v_1 \prec_{\sigma} \cdots \prec_{\sigma} v_p, \forall \sigma \in \F$. 
\end{property}

Consider any $i,j \in [p], i < j$. For each $\sigma \in \F$, it is safe to assume that $v_i \prec_{\sigma} u_{ij} \prec_{\sigma} v_j$. Otherwise, we can modify the permutation $\sigma$ such that $\F$ is still a pairwise suitable family of permutations for $K_n^{1/2}$. To demonstrate this, suppose $v_i  \prec_{\sigma} v_j \prec_{\sigma} u_{ij}$. Then, we modify $\sigma$ such that $u_{ij}$ is the immediate predecessor of $v_j$. It is easy to verify that, for each pair of disjoint edges $e,f \in E(K_n^{1/2})$, if $e \prec_{\sigma} f$ or  $f \prec_{\sigma} e$ then the same holds in the modified $\sigma$ too. Similarly, if $u_{ij} \prec_{\sigma} v_i  \prec_{\sigma} v_j$ then we modify $\sigma$ such that $u_{ij}$ is the immediate successor of $v_i$. Hence we can assume the next property also without loss in generality.

\begin{property}
\label{property2KnHalf}
$v_i \prec_{\sigma} u_{ij} \prec_{\sigma} v_j, \forall i, j \in [p], i <j,~ \forall \sigma \in \F$.
\end{property}

These two properties ensure that for any two open intervals $(i,j)$ and $(k,l)$ in $C_p$ if $(i,j) \lhd (k,l)$ then $u_{ij} \prec_{\sigma} u_{kl}, \forall \sigma \in \F$. In the other case, i.e., when $(i,j) \cap (k,l) \neq \emptyset$, we make the following claim.

\begin{claim}
\label{claim1KnHalf}
Let $i,j,k,l \in [p]$ such that $(i,j) \cap (k,l) \neq \emptyset$. Then there exist $\sigma_a, \sigma_b \in \F$ such that $u_{ij} \prec_{\sigma_a} u_{kl}$ and $u_{kl} \prec_{\sigma_b} u_{ij}$.
\end{claim}


Since $(i,j) \cap (k,l) \neq \emptyset$, we have $k < j$ and $i < l$. Hence by Property \ref{property1KnHalf}, $\forall \sigma \in \F$, $v_k \prec_{\sigma} v_j$ and $v_i \prec_{\sigma} v_l$. Now we prove the claim by contradiction. If  $u_{ij} \prec_{\sigma} u_{kl}$ for every $\sigma \in \F$ then, together with the fact that $v_k \prec_{\sigma} v_j, \forall \sigma \in \F$, we see that no $\sigma \in \F$ can separate the edges $\{v_j, u_{ij}\}$ and $\{v_k, u_{kl}\}$. But this contradicts the fact that $\F$ is a pairwise suitable family of permutations for $K_n^{1/2}$.  Similarly if $u_{kl} \prec_{\sigma} u_{ij}$ for every $\sigma \in \F$ then, together with the fact that $v_i \prec_{\sigma} v_l, \forall \sigma \in \F$, we see that no $\sigma \in \F$ can separate $\{v_i, u_{ij}\}$ and $\{v_l, u_{kl}\}$. But this too contradicts the pairwise suitability of $\F$. Thus we prove Claim \ref{claim1KnHalf}.

With these two properties and the claim above, we are ready to prove the following claim.

\begin{claim}
\label{claim2KnHalf}
$|\F| \geq \idim((C_p, \lhd))$.
\end{claim}

For every $\sigma \in \F$, construct a total order $L_{\sigma}$ of $C_p$ such that $(i,j) \lhd (k,l) \in L_{\sigma} \iff u_{ij} \prec_{\sigma} u_{kl}$. By Property \ref{property1KnHalf} and Property \ref{property2KnHalf}, $L_{\sigma}$ is a linear extension of $(C_p, \lhd)$. Further, Claim \ref{claim1KnHalf} ensures that $\mathcal{R} = \{L_{\sigma}\}_{\sigma \in \F}$ is a realiser of $(C_p, \lhd)$. Hence $|\F| = |\mathcal{R}| \geq \idim((C_p, \lhd))$.


Now we are ready to show the final claim which settles the lower bound.

\begin{claim}
\label{claim3KnHalf}
$|\F| \geq \frac{1}{2}\log \log (n-1)$. 
\end{claim}

Suppose for contradiction that $|\F| = r < \frac{1}{2}\log \log (n-1)$. Then, by Claim \ref{claim2KnHalf}, $r \geq \idim((C_p, \lhd))$ where $p = (n-1)^{1/2^r} + 1 > 2^{\sqrt{\log (n-1)}} + 1$. But then, by Theorem \ref{theoremIdimCanonicalLowerBound}, we have $r \geq \log \log (p-1) > \log \log (2^{\sqrt{\log (n-1)}}) = \frac{1}{2}\log \log (n-1)$ which contradicts our starting assumption. 
\end{proof}




\section{Discussion and open problems}
\label{sectionOpenProblems}
For a graph $G$, we have given upper bounds for $\boxli(G)$ exclusively in terms of $|V(G)|$, $\Delta(G)$, $\tw(G)$, $\chi_a(G)$ and $\chi_s(G)$. Hence it is natural to ask if a lower bound can be given for $\boxli(G)$ exclusively in terms of any of these parameters. The answer turns out to be negative at least for the first three. An empty graph $E_n$ on $n$ vertices has $\boxli(E_n) = 0$.  The star graph $S_{n-1}$ on $n-1$ leaves has $\Delta(S_n) = n-1$, but $\boxli(S_n) = 0$. The $n \times n$ square grid $G$ on the plane has a treewidth $n$ but a bounded $\boxli(G)$ since it is planar. In fact $\boxli(G) = 2$ since a plane drawing of $G$ as an axis-parallel grid is a $2$-box representation of $L(G)$. As for $\chi_a(G)$, and hence $\chi_s(G)$, we cannot hope to get an exclusive lower bound for $\boxli(G)$ of a larger order than $\log \log \chi_a(G)$. This is because if $G$ is the graph obtained by replacing every edge of $K_n$ with $n-1$ parallel paths of length $2$, then it is easy to see that (by two applications of pigeonhole principle) $\chi_a(G) \geq n$ \cite{kostochka1976note}. But since $G$ is $2$-degenerate we know that $\boxli(G) \in \order{\log \log |G|}$ and $|G| \leq n^3$.

In view of the above, it is natural to ask what other graph parameters, apart from $\omega(G)$, have a potential to give an exclusive lower bound for $\boxli(G)$. Two parameters that we have tried are the Hadwiger number $\eta(G)$ and chromatic number $\chi(G)$. The Hadwiger number of a graph is the size of a largest clique minor in $G$. Note that $\tw(G) + 1 \geq \eta(G)$ and if the Hadwiger conjecture is true, then $\eta(G) \geq \chi(G) \geq \omega(G)$. The possibility of getting an exclusive lower bound for $\boxli(G)$ in terms of $\eta(G)$ is ruled out because the double $n \times n$ square grid $G$, i.e., the graph obtained by taking two identical $n \times n$ square grids and connecting the identical nodes with an edge, has $\eta(G) \geq n$ \cite{chandran2007hadwiger} but $\boxli(G) \leq 3$. Here again an axis parallel $3$-dimensional drawing of $G$ is a $3$-box representation. We have shown that $\boxli(G) \geq \log \floor{\omega(G)/2}$. But a similar bound in terms of $\chi(G)$ could not be arrived at and hence we pose the following question.

\begin{openproblem}
For any graph $G$, is $\boxli(G) \geq \log \chi(G) - c$, for some constant $c$?
\end{openproblem}

The answer is positive for graphs like perfect graphs where $\chi(G) = \omega(G)$. Notice that we cannot have an upper bound for $\boxli(G)$ exclusively in terms of $\chi(G)$ since the complete bipartite graph $K_{n,n}$ has $\boxli(K_{n,n}) \geq \log n$, but $\chi(K_{n,n}) = 2$.

Among the upper bounds that are obtained in this paper, for which we do not know any reasonable tightness, the one based on $\Delta(G)$ (Theorem \ref{theoremBoxliDelta}) is the one that has engaged us the most. We saw that $\boxli(G) \leq 2^{9 \ilog \Delta(G)} \Delta(G)$. For a chordal graph $G$, by Theorem \ref{theoremBoxliTreewidth}, we have $\boxli(G) \in O(\log \Delta(G))$,  since $\omega(G) -1  \leq \Delta(G)$. For a graph $G$ with $\Delta(G)$ of order at least $\log n$, by Theorem \ref{theoremBoxliSize} (on $|V(G)|$), we have $\boxli(G) \in \order{\Delta(G)}$. On the other hand, the  examples of sparse graphs that we have studied, together with the monotonicity of $\boxli(G)$ tempts us to make the following conjecture.

\begin{conjecture}
For a graph $G$ with maximum degree $\Delta(G)$, $\boxli(G) \in \order{\Delta(G)}$. 
\end{conjecture}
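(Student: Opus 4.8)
The plan is to read the conjecture through the recursive machinery already built for Theorem~\ref{theoremBoxliDelta}. Writing $\boxli(\Delta)$ for $\max\{\boxli(G) : \Delta(G)\le\Delta\}$ and setting $T(\Delta):=\boxli(\Delta)/\Delta$, the recurrence established there, namely $\boxli(\Delta)\le 2^9\frac{\Delta}{\log\Delta}\boxli(\log\Delta)$, is exactly $T(\Delta)\le 2^9\,T(\log\Delta)$. Thus the current bound $\boxli(\Delta)\le 2^{9\ilog\Delta}\Delta$ is simply the accumulation of a constant factor $2^9$ over the $\ilog\Delta$ levels of the recursion, and the conjecture $\boxli(G)\in\order{\Delta}$ is precisely the assertion $T(\Delta)\in\order{1}$. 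So the whole task reduces to driving the per-level multiplier in this recurrence down from $2^9$ to at most $1$.

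Before attacking the recurrence I would record that the conjecture already holds at both ends of the range of $\Delta$: when $\Delta$ is bounded by a constant the factor $2^{9\ilog\Delta}$ is itself a constant, so $\boxli(G)\in\order{\Delta}$ trivially; and when $\Delta\in\orderatleast{\log n}$ Theorem~\ref{theoremBoxliSize} gives $\boxli(G)\le 6.84\log n\in\order{\Delta}$. Hence the only live regime is $\Delta\to\infty$ with $\Delta=o(\log n)$, where the problem is genuinely to remove the $2^{\Theta(\ilog\Delta)}$ factor. My approach would be to sharpen the two ingredients that together produce the constant $2^9$: the Lov\'asz-local-lemma partition of Lemma~\ref{lemmaLogDegreePartition} and its insertion into Lemma~\ref{lemmaMaxPairsParts}. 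Concretely, I would seek a vertex partition into $r$ parts whose every pair of parts induces a subgraph of maximum degree $d$ with the \emph{no-waste} property $rd=(1+o(1))\Delta$; feeding this into Lemma~\ref{lemmaMaxPairsParts}, whose bound $\boxli(G)\le 13.68\log r+\hat{\boxli}(P_G)\,r$ has a lower-order first term, the main term becomes $\boxli(d)\,r=\frac{\boxli(d)}{d}\,rd=(1+o(1))\Delta\,T(d)$, yielding the improved recurrence $T(\Delta)\le(1+o(1))T(d)+o(1)$, which telescopes to $\order{1}$.

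The hard part is that two stubborn factors strictly above $1$ resist this program. First, a \emph{doubling} loss: the subgraph induced on a pair of parts $V_i\cup V_j$ can have maximum degree up to twice the per-part degree, since a vertex may have neighbours in both $V_i$ and $V_j$; this alone inserts a factor $2$, and is exactly why the proof of Theorem~\ref{theoremBoxliDelta} must pass from per-part degree $\frac12\log\Delta$ to pair-degree $\log\Delta$. Second, a \emph{concentration} loss: for the partition of Lemma~\ref{lemmaLogDegreePartition} to exist via the local lemma, the per-part degree must exceed its expectation $\Delta/r$ by a Chernoff factor $(1+\delta)$ large enough that the tail beats $1/\mathrm{poly}(\Delta)$, which is possible only when the expected per-part degree is $\orderatleast{\log\Delta}$; this simultaneously caps $r$ at $\order{\Delta/\log\Delta}$ and keeps the product of $r$ and the per-part degree at least $(1+\delta)\Delta$ with $\delta$ bounded away from $0$. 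Together these force the honest per-level multiplier to be at least $2(1+\delta)>1$, so the recursion still inflates by a constant at every level and hence by $2^{\Theta(\ilog\Delta)}$ overall. Removing that factor is therefore not a matter of optimising constants inside this scheme: the scheme itself has a built-in barrier of a constant strictly above $1$ per level.

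To break the barrier I would abandon the symmetric vertex partition and instead exploit the line-graph structure directly through an edge decomposition. By Vizing's theorem, colour $E(G)$ with $\Delta+1$ colours into matchings $M_1,\dots,M_{\Delta+1}$; since each $M_c$ is a set of pairwise disjoint edges it is separated by a single permutation, so disjoint edges of the same colour cost nothing. The remaining job is to separate disjoint edges lying in two \emph{different} colour classes, and the aim would be to do this with only $\order{1}$ permutations charged to each colour, rather than the $\boxli(d)$ permutations charged to every pair of parts in Lemma~\ref{lemmaMaxPairsParts}, which would give $\order{\Delta}$ directly. Building a small family that realises all cross-colour separations simultaneously, presumably from a scrambling or $3$-suitable family on the $\Delta+1$ colours together with a local tie-break inside each colour, is the essential new ingredient the conjecture seems to demand, and is where I expect the genuine difficulty to lie.
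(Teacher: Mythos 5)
You were asked to prove a statement that the paper itself does not prove: it is posed in Section \ref{sectionOpenProblems} as an open conjecture, and the paper's strongest results in this direction are exactly the ones you cite --- Theorem \ref{theoremBoxliDelta} ($\boxli(G) \leq 2^{9 \ilog \Delta} \Delta$), the trivial case of bounded $\Delta$, and the observation that Theorem \ref{theoremBoxliSize} settles the regime $\Delta \in \orderatleast{\log n}$. Your structural analysis is sound and matches the paper's own discussion: writing $T(\Delta) = \boxli(\Delta)/\Delta$ does turn Equation (\ref{equationRecurrencePi}) into $T(\Delta) \leq 2^9 T(\log \Delta)$, and your two identified losses are real. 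Even if the concentration slack in Lemma \ref{lemmaLogDegreePartition} is driven to $1+o(1)$ by taking larger parts (expected per-part degree $\gg \log \Delta$ still permits the local lemma), the pair-degree doubling alone keeps the per-level multiplier at about $2$, so the $2^{\Theta(\ilog \Delta)}$ accumulation is intrinsic to the scheme of Lemma \ref{lemmaMaxPairsParts} combined with Lemma \ref{lemmaLogDegreePartition}. That barrier diagnosis is correct and worth having on record.

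The genuine gap is that your final paragraph is a reduction, not a proof, and the claim it reduces to is not shown to be any easier than the conjecture itself. After the Vizing decomposition into matchings $M_1, \ldots, M_{\Delta+1}$, all of the content of the conjecture sits in the assertion that cross-colour separations can be realised by a family charging only $\order{1}$ permutations per colour class, and you give neither a construction nor an argument for feasibility. Note in particular that the device which makes Lemma \ref{lemmaMaxPairsParts} efficient for \emph{vertex} partitions --- grouping vertex-disjoint pairs of parts into a single graph $G_i$ via a matching of the part graph $H$, so that one pays $\hat{\boxli}(P_G)$ once per matching rather than once per pair --- is unavailable for \emph{edge} colour classes: distinct unions $M_c \cup M_{c'}$ share vertices, so their pairwise suitable families cannot be merged by Observation \ref{observationDisjointComponents}, and the naive accounting gives $\order{\Delta^2}$, not $\order{\Delta}$. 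So the proposal is an honest and well-informed research plan whose hard step is precisely the open problem; the conjecture remains unproven both in the paper and in your attempt.
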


%


Since $\boxli(G)$ is the boxicity of the line graph of $G$, it is interesting to see how it is related to boxicity of $G$ itself. But unlike separation dimension, boxicity is not a monotone parameter. For example the boxicity of $K_n$ is $1$, but deleting a perfect matching from $K_n$, if $n$ is even, blows up its boxicity to $n/2$. Yet we couldn't find any graph $G$ such that $\boxi(G) > 2^{\boxli(G)}$. Hence we are curious about the following question.

\begin{openproblem}
Does there exist a function $f : \mathbb{N} \into \mathbb{N}$ such that $\boxi(G) \leq f(\boxli(G))$?
\end{openproblem}

Note that the analogous question for $\boxlistar(G)$ has an affirmative answer. If there exists a vertex $v$ of degree $d$ in $G$, then any $3$-mixing family of permutations of $V(G)$ should contain at least $\log d$ different permutations because any single permutation will leave $\ceil{d/2}$ neighbours of $v$ on the same side of $v$. Hence $\log \Delta(G) \leq \boxlistar(G)$. From \cite{DiptAdiga}, we know that $\boxi(G) \in \order{\Delta(G) \log^2 \Delta(G)}$ and hence $\boxi(G) \in \order{2^{\boxlistar(G)} (\boxlistar(G))^2}$.

The upper and lower bounds for $\boxli(K_n^r)$, given by Theorem \ref{theoremHypergraphSizeLowerbound} differ by a factor of $r$. Estimating the exact order growth of $\boxli(K_n^r)$ will be a challenging question. A similar gap of $r^2$ is present in the upper and lower bounds for the size of a smallest family of completely $r$-scrambling permutations of $[n]$ (See \cite{radhakrishnan2003note}).

\begin{openproblem}
What is the exact order growth of $\boxli(K_n^r)$?
\end{openproblem}

Another interesting direction of enquiry is to find out the maximum number of hyperedges (edges) possible in a hypergraph (graph) $H$ on $n$ vertices with $\boxli(H) \leq k$. Such an extremal hypergraph $H$, with $\boxli(H) \leq 0$, is seen to be a maximum sized intersecting family of subsets of $[n]$. A similar question for order dimension of a graph has been studied \cite{agnarsson1999maximum,agnarsson2002extremal} and has found applications in ring theory. We can also ask a three dimensional analogue of the question answered by Schnyder's theorem in two dimensions. Given a collection $P$ of non parallel planes in $\R^3$, can we embed a graph $G$ in $\R^3$ so that every pair of disjoint edges is separated by a plane parallel to one in $P$. Then $|P|$ has  to be at least $\boxli(G)$ for this to be possible. This is because the permutations induced by projecting such an embedding onto the normals to the planes in $P$ gives a pairwise suitable family of permutations of $G$ of size $|P|$. Can $|P|$ be upper bounded by a function of $\boxli(G)$?

\bibliographystyle{plain}
\bibliography{mathewref}

\end{document}